\newtheorem{theorem}{Theorem}[section]
\newtheorem{lemma}[theorem]{Lemma}
\newtheorem{proposition}[theorem]{Proposition}
\newtheorem{corollary}[theorem]{Corollary}
\theoremstyle{definition}
\newtheorem{definition}[theorem]{Definition}
\theoremstyle{remark}
\numberwithin{equation}{section}
\begin{document}
	\title[Dirichlet characters and Peisert-like graphs on $\mathbb{Z}_n$]
	{Hypergeometric functions for Dirichlet characters and Peisert-like graphs on $\mathbb{Z}_n$}
	
	
	\author{Anwita Bhowmik}
	\address{Department of Mathematics, Indian Institute of Technology Guwahati, North Guwahati, Guwahati-781039, Assam, INDIA}
	\email{anwita@iitg.ac.in}
	\author{Rupam Barman}
	\address{Department of Mathematics, Indian Institute of Technology Guwahati, North Guwahati, Guwahati-781039, Assam, INDIA}
	\email{rupam@iitg.ac.in}
	

	\subjclass[2010]{05C25; 05C30; 11T24; 11T30}
	\date{September 30, 2023: Journal: La Matematica (accepted for publication)}
	\keywords{Peisert graphs; clique; finite fields; Dirichlet characters; character sums}
	\begin{abstract}
		For a prime $p\equiv 3\pmod 4$ and a positive integer $t$, let $q=p^{2t}$. The Peisert graph of order $q$ is the graph with vertex set $\mathbb{F}_q$ such that $ab$ is an edge if $a-b\in\langle g^4\rangle\cup g\langle g^4\rangle$, where $g$ is a primitive element of $\mathbb{F}_q$. In this paper, we construct a similar graph with vertex set as the commutative ring $\mathbb{Z}_n$ for suitable $n$, which we call \textit{Peisert-like} graph and denote by $G^\ast(n)$. Owing to the need for cyclicity of the group of units of $\mathbb{Z}_n$, we consider $n=p^\alpha$ or $2p^\alpha$, where $p\equiv 1\pmod 4$ is a prime and $\alpha$ is a positive integer. For primes $p\equiv 1\pmod 8$, we compute the number of triangles in the graph $G^\ast(p^{\alpha})$ by evaluating certain character sums. Next, we study cliques of order 4 in $G^\ast(p^{\alpha})$. To find the number of cliques of order $4$ in $G^\ast(p^{\alpha})$, we first introduce hypergeometric functions containing Dirichlet characters as arguments, and then express the number of cliques of order $4$ in $G^\ast(p^{\alpha})$ in terms of these hypergeometric functions.
	\end{abstract} 
	\maketitle
	\section{Introduction}
	Let $\mathbb{F}_q$ be the finite field with $q$ elements, where $q$ is a prime power and $q\equiv 1\pmod 4$. The Paley graph of order $q$ is the graph with vertex set $\mathbb{F}_q$ and edges defined as, $xy$ is an edge if $x-y$ is a nonzero square in $\mathbb{F}_q$. The Paley graphs are a well-known family of self-complementary and symmetric graphs.   In 2001, Peisert \cite{peisert} discovered a new infinite family of self-complementary and symmetric graphs which he called $\mathscr{P}^\ast$ graphs (now called \textit{Peisert graphs}) and deduced that every self-complementary and symmetric graph is isomorphic to either a Paley graph or a $\mathscr{P}^\ast $ graph or one exceptional graph with $529$ vertices. To define the Peisert graph, we take $q=p^{2t}$ where $p\equiv 3\pmod 4$ is a prime and $t$ is a positive integer. Let $g$ be a primitive element of the finite field $\mathbb{F}_q$, 
	that is, $\mathbb{F}_q^\ast=\mathbb{F}_q\setminus\{0\}=\langle g\rangle$. Then the Peisert graph of order $q$ is the graph $\mathscr{P}^\ast(q)=(V,E)$, where $V=\mathbb{F}_q$ and $E=\{xy| x-y\in\langle g^4\rangle \cup g\langle g^4\rangle\}$. It turns out that an edge is well defined, since $q\equiv 1\pmod 8$ implies that $-1\in\langle g^4\rangle$. Note that for $q\equiv 1\pmod 4$ where $q$ is an even power of a prime $p\equiv 3\pmod 4,$ the edges of the Paley graph are determined by the cosets $\langle g^4\rangle\cup g^2\langle g^4\rangle$, while those of the Peisert graph depend on the cosets $\langle g^4\rangle\cup g\langle g^4\rangle$. Like the Paley graph, $\mathscr{P}^\ast(q)$ is strongly regular with parameters $\left(q,\frac{q-1}{2},\frac{q-5}{4},\frac{q-1}{4}\right)$.
	Various properties of Peisert graphs have been studied, for example, their automorphism groups by Peisert himself in \cite{peisert}, pseudo-random properties in \cite{pseudo}, structure of maximal and maximum cliques in \cite{yip_maximal} and \cite{rigidity}, critical groups of the graphs in \cite{critical}, etc. Peisert graphs have been used to produce binary and ternary codes from their adjacency matrices in \cite{key2019special}. In \cite{james}, certain designs have been produced from Peisert graphs as well.
	\par 
	Number theorists have introduced finite field hypergeometric functions as generalizations of classical hypergeometric functions by using Gauss and Jacobi sums, see for example \cite{greene,greene2,mccarthy3}. Some of the biggest motivations for studying finite field hypergeometric functions have been their connections with Fourier coefficients and eigenvalues of modular forms and with counting points on certain kinds of algebraic varieties. For example, Ono \cite{ono, ono2} gave formulae for the number of $\mathbb{F}_p$-points on elliptic curves in terms of special values of finite field hypergeometric functions. These functions have recently led to applications in graph theory as well, for example in the study of Paley and Peisert graphs \cite{bhowmikpeisert, dawsey, wage}. 
	\par 
	Besides Paley graphs being generalized (see for example \cite{bhowmik2020paley, lim2006generalised}), Peisert graphs have also been generalized into graphs called \textit{generalized Peisert} or \textit{Peisert type} graphs and the structure of their maximum and maximal cliques have been studied in \cite{psepal, asgarli, rigidity, mullin}. The generalizations of Paley and Peisert graphs are all cyclotomic graphs, that is, Cayley graphs with the connection set being the union of cyclotomic classes (a reference for cyclotomic graphs is \cite{brow}). Cyclotomic graphs are of special interest in both algebraic graph theory and number theory; in particular, character sums and Gauss sums have been extensively used to study such graphs.
	In this article, we introduce a \textit{Peisert-like} graph on the commutative ring $\mathbb{Z}_n$, for suitable $n$. Computing the number of cliques in Paley, Peisert and Paley-type graphs has been of interest, for instance see \cite{james,bhowmik2020paley, bhowmikpeisert, evans1981number, gory}. Our primary focus is to evaluate the number of cliques of orders 3 and 4 in the Peisert-like graph by evaluating certain character sums involving Dirichlet characters. To this end, we introduce hypergeometric functions containing Dirichlet characters as arguments in Section \ref{sechyp}, and then use these functions to compute the number of cliques of order 4 in the Peisert-like graph.
	\section{Defining the Peisert-like graph}
	The analogue of a Paley graph, called the Paley-type graph, was constructed by us in \cite{bhowmik2020paley}, where the vertex set was taken to be the commutative ring $\mathbb{Z}_n$ for suitable $n$.	Let $\mathbb{Z}_n^\ast$ denote the multiplicative group of units of $\mathbb{Z}_n$. We consider $n$ such that $\mathbb{Z}_n^\ast$ is cyclic, omitting the trivial cases $n=2,4$. By a famous result due to Gauss, we have $n=p^\alpha$ or $n=2p^\alpha$, where $p$ is an odd prime and $\alpha$ is a positive integer.  First, we consider the possibilities of constructing graphs by considering two cosets out of the four cosets of the subgroup $\langle g^4\rangle$ in $\mathbb{Z}_n^\ast=\langle g\rangle$. Let $|x|$ denote the order of an element $x\in\mathbb{Z}_n^\ast$. Then,
	\begin{equation*}
		|g^4|=\frac{|g|}{\gcd(4,|g|)}=\frac{p^{\alpha-1}(p-1)}{\gcd(4,p^{\alpha-1}(p-1))}=\frac{p^{\alpha-1}(p-1) }{2 \gcd(2,\frac{p-1}{2})}.
	\end{equation*} 
	If $\gcd(2,\frac{p-1}{2})=1$ then $|g^4|=\frac{|g|}{2}$ and so $\langle g^4\rangle $ has two distinct cosets in $\langle g\rangle $, whereby $\mathbb{Z}_n^\ast$ becomes the union of the two distinct cosets. So, in order that the edge set of the graph we construct depends on a proper subset of $\mathbb{Z}_n^\ast$, we need that $\gcd(2,\frac{p-1}{2})\neq 1$, and hence $2\mid \frac{p-1}{2}$, that is, $p\equiv 1\pmod 4$. Then there are four distinct cosets of $\langle g^4\rangle$ in $\langle g\rangle $. Subsequently, we assume that $n=p^\alpha$ or $n=2p^\alpha$ where $p$ is an odd prime such that $p\equiv 1\pmod 4$ and $\alpha\geq 1$. We look at the possible pairs of distinct cosets of $\langle g^4\rangle $ that can be taken to construct the edge set of a well defined graph. Let the cosets be $g^i\langle g^4\rangle $ and $g^j\langle g^4\rangle , i\neq j$ and $i,j\in \{0,1,2,3\}$. To ensure that an edge is well defined for an undirected graph, we need the property that,
	for $x\in\mathbb{Z}_n$, if $x\in g^i\langle g^4\rangle \cup ~ g^j\langle g^4\rangle$  then $-x\in g^i\langle g^4\rangle \cup ~ g^j\langle g^4\rangle$. Let $G_{i,j}$ denote the graph constructed, if possible, by taking the vertex set to be $\mathbb{Z}_n$, where $G_{i,j}$ has an edge $xy$ if $x-y\in g^i\langle g^4\rangle \cup g^j\langle g^4\rangle $. We find that it is enough to study the case when $i=0$ or $j=0$, for otherwise we can find an isomorphism between $G_{i,j}$ and $G_{0,j-i}$; consequently we consider the possible graphs $G_{0,a}$ where $a\in\{1,2,3\}$. If $a=2$ then we get back the Paley-type graph which we studied in \cite{bhowmik2020paley}, and so we do not consider $G_{0,2}$. Moreover, $G_{0,1}$ and $G_{0,3}$, if well defined, are isomorphic. Thus, the only graph needed to be considered is $G_{0,1}$. For edges in the graph to be well defined, we require that $p\equiv 1\pmod 8$, and hence, we have the following definition.
	\begin{definition}[Peisert-like graph $G^\ast(n)$]\label{defn1}
		Let $n=p^\alpha$ or $n=2p^\alpha$, where $p$ is an odd prime such that $p\equiv 1\pmod 8$ and $\alpha$ is a positive integer. Let $\mathbb{Z}_n^\ast=\langle g\rangle $. Then, the \textit{Peisert-like graph} is the graph $G^\ast(n)=(V,E)$, where $V=\mathbb{Z}_n$ and $E=\{xy| x-y\in \langle g^4\rangle \cup g\langle g^4\rangle \}$.
	\end{definition}
	The definition of the graph is independent of the choice of the generator $g$, like in the Peisert graph. To see this, let $h$ be another generator of $\mathbb{Z}_n^\ast$. Then $h=g^t$ for some $t\in\mathbb{Z}$. If $t$ is even then $h=(g^{\frac{t}{2}})^2\in (\mathbb{Z}_n^\ast)^2$, which implies $\mathbb{Z}_n^\ast\subseteq (\mathbb{Z}_n^\ast)^2 $, which is not possible. So, $t\equiv 1$ or $3\pmod 4$. If $t\equiv 1\pmod 4$, then $\langle g^4\rangle =\langle h^4\rangle $ since both are subgroups of order $\frac{p^{\alpha-1}(p-1)}{4}$ and $\mathbb{Z}_n^\ast$ is cyclic, and $h\langle h^4\rangle =g^t\langle g^4\rangle =g\langle g^4\rangle $. So the edge set remains unchanged. If $t\equiv 3\pmod 4$, we define the graph $G'(n)$ as $G'(n)=(V',E')$, where $V'=\mathbb{Z}_n$ and $E'=\{xy|x-y\in \langle h^4\rangle \cup h\langle h^4\rangle\}$. Then,
	\begin{align*}
		V(G^\ast(n))&\rightarrow V(G'(n))\\
		x&\mapsto hx
	\end{align*}	
	is an isomorphism.
	\section{Statement of main results}\label{secmain}
	The Peisert-like graph $G^\ast(n)$ is defined for $n=p^\alpha$ or $n=2p^\alpha$, where $p\equiv 1\pmod 8$ is a prime and $\alpha$ is a positive integer. However, to calculate the number of cliques of orders three and four in the graph, we omit the case $n=2p^\alpha$. This is because there cannot exist cliques of order more than two if $n=2p^\alpha$, and we see why. Let $n=2p^\alpha$, and if possible let $x,y$ and $z$ be vertices in $G^\ast(n)$ which form a clique. Then $x-y,y-z$ and $x-z$ are necessarily elements in $\mathbb{Z}_n^\ast$, and therefore, are odd integers, which contradicts that $x-z=x-y+y-z$. Thus, we consider only the case $n=p^\alpha$.
	\par Let $k_m(G)$ denote the number of cliques of order $m$ in the graph $G$. In the following theorem, we compute the number of cliques of order three in the Peisert-like graph. 
	\begin{theorem}\label{thm1}
		Let  $p\equiv 1\pmod 8$ be a prime and let $\alpha$ be a positive integer. Let $G^\ast(p^\alpha)$ be the Peisert-like graph of order $p^\alpha$. Then, $$k_3(G^\ast(p^\alpha))=\frac{p^{3\alpha-2}(p-1)(p-5)}{48}.$$
	\end{theorem}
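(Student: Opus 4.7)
The plan is to express the number of triangles as a character sum and reduce it to Jacobi sums over $\mathbb{Z}_n$. Writing $S = \langle g^4\rangle \cup g\langle g^4\rangle$ and $n = p^\alpha$, each triangle is counted six times among ordered triples $(x, y, z) \in \mathbb{Z}_n^3$ whose three pairwise differences lie in $S$; substituting $u = x - y$, $v = y - z$ with $z$ a free parameter gives
\[
6\, k_3(G^\ast(n)) = n \sum_{u, v \in \mathbb{Z}_n} \mathbf{1}_S(u)\,\mathbf{1}_S(v)\,\mathbf{1}_S(u+v).
\]
Let $\phi$ denote the Dirichlet character mod $n$ with $\phi(g) = i$, and let $\epsilon$ be the principal character mod $n$. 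A direct check on the four cosets of $\langle g^4\rangle$ yields the identity
\[
\mathbf{1}_S(a) \;=\; \tfrac{1}{2}\,\epsilon(a) + \tfrac{1-i}{4}\,\phi(a) + \tfrac{1+i}{4}\,\phi^3(a)
\]
for every $a \in \mathbb{Z}_n$ (both sides vanish on multiples of $p$, since all Dirichlet characters, including $\epsilon$, vanish on non-units).

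Expanding the cube, the double sum becomes a sum over $(r, s, t) \in \{0, 1, 3\}^3$ of $a_r a_s a_t \cdot T(r, s, t)$ with $a_0 = 1/2$, $a_1 = (1-i)/4$, $a_3 = (1+i)/4$ and
\[
T(r, s, t) = \sum_{u, v \in \mathbb{Z}_n} \phi^r(u)\,\phi^s(v)\,\phi^t(u+v).
\]
Fixing $w = u+v$ and substituting $u = wu'$ for $w \in \mathbb{Z}_n^\ast$ gives $T(r, s, t) = J(\phi^r, \phi^s)\sum_{w} \phi^{r+s+t}(w)$, which by orthogonality vanishes unless $r + s + t \equiv 0 \pmod 4$ and otherwise equals $\varphi(n)\,J(\phi^r, \phi^s)$. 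Among triples with entries in $\{0, 1, 3\}$, only $(0, 0, 0)$ and the six permutations of $(0, 1, 3)$ survive.

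The main technical obstacle, and the step I expect to need the most care, is the evaluation of the Jacobi sums over the ring $\mathbb{Z}_n$ rather than over a field. The trivial case is direct: $J(\epsilon, \epsilon)$ counts $u \in \mathbb{Z}_n$ with $u \not\equiv 0, 1 \pmod p$, yielding $p^{\alpha-1}(p - 2)$. For each nontrivial pair, a change of variable of the form $v = u/(1-u)$ or $v = 1 - u$, combined with character orthogonality on $\mathbb{Z}_n^\ast$, collapses the Jacobi sum to a sum of $\phi^j$ over the coset of the principal units $U_1 = \langle g^{p-1}\rangle$ containing $-1$ or $1$, which equals $\pm\phi^j(-1)\sum_{u \in U_1}\phi^j(u)$. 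The hypothesis $p \equiv 1 \pmod 8$ enters decisively here: $(p-1)/2 \equiv 0 \pmod 4$ forces $\phi(-1) = i^{p^{\alpha-1}(p-1)/2} = 1$, while $4 \mid p - 1$ forces $U_1 \subseteq \langle g^4\rangle = \ker\phi$, so $\phi|_{U_1}$ is trivial. Each of the six nontrivial Jacobi sums is therefore $-p^{\alpha-1}$.

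Assembling the contributions, the $(0, 0, 0)$ term contributes $a_0^3 J(\epsilon, \epsilon) = \tfrac{1}{8} p^{\alpha-1}(p-2)$ to the bracket, while each of the six permutations of $(0, 1, 3)$ contributes $a_0 a_1 a_3 \cdot (-p^{\alpha-1}) = -\tfrac{1}{16} p^{\alpha-1}$, so the bracket totals $\tfrac{1}{8} p^{\alpha-1}(p - 5)$. Multiplying by $\varphi(n) = p^{\alpha-1}(p - 1)$ from the $T$-reductions and by $n = p^\alpha$ from the outer factor, then dividing by $6$, gives $k_3(G^\ast(p^\alpha)) = p^{3\alpha - 2}(p - 1)(p - 5)/48$, which is the claimed identity.
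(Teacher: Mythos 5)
Your argument is correct and, at bottom, it is the same character-sum method as the paper's: your indicator $\mathbf{1}_S(a)=\tfrac12\epsilon(a)+\tfrac{1-i}{4}\phi(a)+\tfrac{1+i}{4}\phi^{3}(a)$ is exactly \eqref{war} with $\chi_4(g)=i$, $h=1-i$, and the decisive arithmetic input --- that the quartic character is trivial on the principal units $1+p\mathbb{Z}_{p^\alpha}$ --- is the same fact as the paper's Lemma \ref{lemz}. The differences are in the organization, and in two places your route is genuinely leaner. First, instead of fixing the vertex $0$ by vertex-transitivity and evaluating the nested double sum term by term, you count ordered triples and factor each expansion term through $w=u+v$, $u=wu'$, so orthogonality in $w$ annihilates every triple with $r+s+t\not\equiv 0\pmod 4$ before any Jacobi sum is touched; in particular $J(\chi_4,\chi_4)$ never enters your computation, whereas in the paper it appears via \eqref{4} and the constant $C$ in \eqref{numb} and only drops out at the final step \eqref{8} --- your factorization makes that cancellation structural rather than computational. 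Second, you get the triviality of the quartic character on the principal units from the purely group-theoretic observation that $U_1=\langle g^{\,p-1}\rangle\subseteq\langle g^4\rangle$ (uniqueness of subgroups of a cyclic group plus $4\mid p-1$), which is a shorter proof of the content of Lemma \ref{lemz} than the paper's binomial-theorem/Legendre-formula argument. Your evaluation of the six nontrivial Jacobi sums is only sketched, but the sketch is sound: e.g.\ $J(\phi,\phi^{3})$ collapses onto the coset $-U_1$, where $\phi(-1)=1$ because $8\mid p-1$, giving $-p^{\alpha-1}$ in each case; also note that distinctness of $x,y,z$ in your ordered-triple count is automatic since $S$ consists of units. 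The final bookkeeping (bracket $\tfrac18 p^{\alpha-1}(p-2)-6\cdot\tfrac1{16}p^{\alpha-1}=\tfrac18 p^{\alpha-1}(p-5)$, then multiplying by $p^{\alpha-1}(p-1)$, by $p^{\alpha}$, and dividing by $6$) checks out and reproduces the stated formula.
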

	For a prime $p\equiv 1 \pmod 8$ and a positive integer $\alpha$, we observe that the number of cliques of order three in the Peisert-like graph of order $p^\alpha$ equals the number of cliques of order three in the Paley-type graph $G_{p^\alpha}$ of order $p^\alpha$, introduced in \cite{bhowmik2020paley}.
	\par To find the number of cliques of order $4$ in the Peisert-like graph  $G^\ast(p^\alpha)$, one needs to compute certain character sums involving Dirichlet characters modulo $p^{\alpha}$. We simplify such character sums by introducing hypergeometric functions for Dirichlet characters. Let $n\in\mathbb{Z}$. A completely multiplicative function $\psi:\mathbb{Z}\rightarrow\mathbb{C}$ is called a Dirichlet character modulo $n$ if $\psi(1)=1,\psi(a)=0$ if $\gcd (a,n)>1$ and $\psi(a)=\psi(b)$ if $a\equiv b\pmod n$. For $a\in\mathbb{Z}$, we define $\overline{\psi}(a):=\overline{\psi(a)}$, whence $\overline{\psi}$ becomes a Dirichlet character mod $n$. The set of Dirichlet characters mod $n$ forms a group (denoted by $\widehat{\mathbb{Z}_n^\ast}$) under multiplication defined as $\psi\lambda(a):=\psi(a)\lambda(a)$, where $\psi$ and $\lambda$ are characters mod $n$. For Dirichlet characters $A$ and $B$ modulo $n$, the Jacobi sum is defined as 
	\begin{align*}
		J(A,B):=\sum_{x\in\mathbb{Z}_{n}} A(x)B(1-x).
	\end{align*}
	Analogous to Greene's hypergeometric functions over finite fields \cite{greene,greene2}, we introduce hypergeometric functions for Dirichlet characters in Section \ref{sechyp}. In the following theorem, we find the number of cliques of order $4$ in the Peisert-like graph by using the hypergeometric functions for Dirichlet characters. We denote by $Im(z)$ the imaginary part of the complex number $z$.
	\begin{theorem}\label{thm2}
		Let $q=p^{\alpha}$, where $p\equiv 1 \pmod 8$ is a prime and $\alpha$ is a positive integer. Let $G^\ast(q)$ be the Peisert-like graph of order $q$. Let $\chi_4$ be a character mod $q$ of order $4$, and let $\varphi$ and $\varepsilon$ be the quadratic and trivial characters mod $q$, respectively.  Then, 
		\begin{align*}
			k_4(G^\ast(q))=\frac{p^{2\alpha-1}(p-1)}{3072}[ &2 p^{2\alpha-2}(p^2-20p+81)+2\text{Im}(\rho)^2+4 \text{Im}(\rho)\cdot \text{Im}(\xi)\\
			& -Re(M_3)+3M_5],			
		\end{align*}
		where $\rho:=J(\chi_4,\chi_4)$ and $\xi:=J(\chi_4,\varphi)$; and 
		$M_3= q^2\cdot{_{3}}F_{2}\left(\begin{array}{ccc}\chi_4, & \overline{\chi_4}, & \overline{\chi_4}\\ & \varphi, & \varepsilon\end{array}| 1\right)
		$ and $M_5= q^2\cdot{_{3}}F_{2}\left(\begin{array}{ccc}\chi_4, & \chi_4, & \overline{\chi_4}\\ & \varepsilon, & \varepsilon\end{array}| 1\right)$ are the hypergeometric terms as defined in Section 5. 
	\end{theorem}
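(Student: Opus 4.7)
The plan is to reduce the count of $4$-cliques to a six-fold Dirichlet character sum and then identify the genuinely three-dimensional pieces as the ${}_3F_2$-hypergeometric values $M_3$ and $M_5$. Because translation $x\mapsto x+c$ is a graph automorphism of $G^\ast(q)$,
\begin{equation*}
24\, k_4(G^\ast(q)) \;=\; q\cdot T,\qquad T := \sum_{y_1,y_2,y_3\in\mathbb{Z}_q} f(y_1)f(y_2)f(y_3)f(y_1-y_2)f(y_1-y_3)f(y_2-y_3),
\end{equation*}
where $f$ is the indicator of $S := \langle g^4\rangle \cup g\langle g^4\rangle$; since $f$ vanishes outside $\mathbb{Z}_q^\ast$, the distinctness of $0, y_1, y_2, y_3$ is enforced automatically.

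Next I would expand $f$ using the four characters $\varepsilon,\chi_4,\varphi,\overline{\chi_4}$ of $\mathbb{Z}_q^\ast/\langle g^4\rangle$. Orthogonality yields $f(a) = \tfrac14\sum_{k=0}^{3}(1+\overline{\chi_4^k(g)})\chi_4^k(a)$; the crucial simplification is that $\varphi(g)=-1$, so the $k=2$ term vanishes and $f = \tfrac12\varepsilon + c\,\chi_4 + \bar c\,\overline{\chi_4}$ with $c := \tfrac14(1+\overline{\chi_4(g)})$. Substituting into $T$ and expanding produces $3^6$ weighted inner sums
\begin{equation*}
\Sigma(k_1,\ldots,k_6) := \sum_{y_1,y_2,y_3}\chi_4^{k_1}(y_1)\chi_4^{k_2}(y_2)\chi_4^{k_3}(y_3)\chi_4^{k_4}(y_1-y_2)\chi_4^{k_5}(y_1-y_3)\chi_4^{k_6}(y_2-y_3),
\end{equation*}
indexed by $(k_1,\ldots,k_6)\in\{0,1,3\}^6$, each weighted by the corresponding product $\prod_{i=1}^{6} c_{k_i}$ with $c_0 = \tfrac12$, $c_1 = c$, $c_3 = \bar c$.

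Each $\Sigma$ I would then evaluate by the standard substitution $y_2 = y_1 u$, $y_3 = y_1 v$ (treating the $y_i=0$ or $y_i=y_j$ diagonals separately): the $y_1$-sum collapses, being nonzero only when the total $\chi_4$-power landing on $y_1$ is $\equiv 0\pmod 4$, and the residual double sum over $(u,v)$, after a translation like $u\mapsto 1-u$, matches the definition of $q^{2}\cdot{}_3F_2$ recorded in Section~\ref{sechyp}. Most tuples reduce to constants, Gauss sums, or Jacobi sums $J(\chi_4^a,\chi_4^b)$ expressible through the standard identities $J(\chi,\bar\chi)=-\chi(-1)$, $J(\chi,\varepsilon)=-1$, $J(\varepsilon,\varepsilon)=q-2$, and $|J(\chi_4,\chi_4)|^2 = q$, using $\chi_4(-1)=1$ (which holds because $p\equiv 1\pmod 8$). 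Only the tuples in which three genuinely independent nontrivial characters meet on the three difference-factors survive as true ${}_3F_2$ data, producing precisely $M_3$ and $M_5$; the two topological configurations correspond to where the squared character $\varphi = \chi_4^2$ lands among the six edge factors. Pairing each tuple with its complex-conjugate partner is what converts complex products of Jacobi sums into the real quantities $\mathrm{Im}(\rho)^2$, $\mathrm{Im}(\rho)\cdot\mathrm{Im}(\xi)$, and $\mathrm{Re}(M_3)$.

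The main obstacle will be the bookkeeping: partitioning the $729$ exponent tuples into equivalence classes whose $\Sigma$-values agree modulo the $S_3$-symmetry in $(y_1,y_2,y_3)$ and complex conjugation, carefully tracking which diagonal contributions come from the $y_i=y_j$ or $y_i=0$ corrections, and assembling the real and imaginary parts of the coefficient products $\prod c_{k_i}$. Once every class is written in terms of $q$, $\rho = J(\chi_4,\chi_4)$, $\xi = J(\chi_4,\varphi)$, and the hypergeometric terms $M_3, M_5$, multiplying by $q/24$ should yield the claimed closed form.
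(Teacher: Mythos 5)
Your skeleton coincides with the paper's: translate one vertex of the clique to $0$, expand the indicator of $\langle g^4\rangle\cup g\langle g^4\rangle$ in characters (your $c=\tfrac14(1+\overline{\chi_4(g)})$ is exactly $h/4$ with $h=1-\chi_4(g)$, i.e.\ identity \eqref{war}), reduce to double character sums, and recognize those as the Dirichlet-character ${}_3F_2$'s. The first genuine gap is that the evaluation toolkit you invoke is the finite-field one, and it is false over $\mathbb{Z}_{p^\alpha}$ when $\alpha>1$. The non-units are not the single point $0$: the sets you must excise are the residue classes $p\mid y_i$ and $p\mid y_i-y_j$, each of size $p^{\alpha-1}$, and the identities you quote become $J(\chi_4,\varepsilon)=-p^{\alpha-1}$, $J(\chi_4,\overline{\chi_4})=-p^{\alpha-1}$, $J(\varepsilon,\varepsilon)=p^{\alpha-1}(p-2)$ and $|J(\chi_4,\chi_4)|^2=p^{2\alpha-1}$, not $-1$, $q-2$ and $q$ (compare Table \ref{Table-1}: $\rho$ for $q=17^2$ is $17$ times its value for $q=17$). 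These corrections are precisely the source of the terms $p^{2\alpha-2}(p^2-20p+81)$, $\mathrm{Im}(\rho)^2$ and $\mathrm{Im}(\rho)\mathrm{Im}(\xi)$ in the statement, and evaluating them requires the fact that $\chi_4$ has period $p$ (Lemma \ref{lemz}), so that a sum over a class $y\equiv c\pmod p$ contributes $p^{\alpha-1}\chi_4(c)$; your plan never records this property, and without it neither the collapse of the $y_1$-sum nor the class-by-class evaluation closes for $\alpha>1$.

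The second gap is the claim that only the tuples with ``three genuinely independent nontrivial characters on the difference factors'' survive, ``producing precisely $M_3$ and $M_5$.'' After the reduction one is left with a large family of distinct values $q^{2}\cdot{}_3F_2(\chi_4^{t_1},\chi_4^{t_2},\chi_4^{t_3};\chi_4^{t_4},\chi_4^{t_5}\mid 1)$ (thirty-two of them in the paper's expansion \eqref{bigexp}), falling into five essentially different values $M_1,\ldots,M_5$. Reducing the answer to $-\mathrm{Re}(M_3)+3M_5$ requires the $\mathbb{Z}_{p^\alpha}$-analogues of Greene's ${}_3F_2(1)$ transformations and the induced group action on exponent $5$-tuples (Lemmas \ref{lem001}--\ref{lem007} and \ref{dlemma1}), together with the cancellation of the $M_1$, $M_2$, $M_4$ contributions, which happens only after adding the counts anchored at the two cosets (vertices $1$ and $g$), whose coefficients are complex conjugates and sum via $h+\overline{h}=2$. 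Conjugation pairing alone, which is all your outline provides, does not accomplish this, so the step you file under ``bookkeeping'' conceals the main missing machinery: the transformation theory for these hypergeometric sums must either be proved (as in Sections \ref{sechyp} and 6) or replaced by an argument you have not supplied. The parts you do state precisely ($24\,k_4(G^\ast(q))=qT$, the vanishing of the $\varphi$-component of the indicator, homogeneity in $y_1$, and $\chi_4(-1)=1$) are correct.
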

	It is evident from the theorem that $M_5$ is a real number, since $k_4(G^\ast(q))$ is a real number. Using Python, we numerically verify Theorem $\ref{thm2}$ for certain values of $p$ and $\alpha$. We list some of the values in Table \ref{Table-1}. We find that for each of the values of $p^\alpha$ listed below, $\rho=\xi$. 
	\begin{table}[!hb]
		\begin{center}
			\begin{tabular}{|c |c | c| c | c| c|}
				\hline
				$q=p^\alpha$ & $\rho=\xi$ & ${M_3}$ & $M_5$ & $k_4(G^\ast(q))$& $k_4(G_q)$\\
				\hline 
				$17^1=17$ & $-1+4i$& $-6-24i$ &$10$ & $17$ &$0$\\
				$41^1=41$ & $-5+4i$ & $-30-24i$ &$-30$ & $1025$ &$1025$\\
				$73^1=73$ & $3+8i$ & $-6+16i$ &$10$ & $14235$ &$13140$\\
				$89^1=89$ & $-5+8i$ & $90+144i$ &$-22$ & $32307$&$31328$ \\
				$97^1=97$ & $-9-4i$ & $90-40i$ &$-150$ & $44426$&$46560$ \\
				$17^2=289$  & $-17+68i$ & $-1734-6936i$ &$2890$ & $1419857$ &$0$\\
				\hline 			
			\end{tabular}
			\caption{Numerical data for Theorem \ref{thm2}}
			\label{Table-1}
		\end{center}
	\end{table}
	The GitHub link for the Python code that we used to compute $k_4
	(G^\ast(q))$ is provided in the appendix. Now, let $G_q$ denote the Paley-type graph defined in \cite{bhowmik2020paley}. We also note that in general, the values of $k_4(G^\ast(q))$ and $k_4(G_q)$ differ. In the last column of Table \ref{Table-1}, using \cite[Theorem 1.2]{bhowmik2020paley} we provide values of $k_4(G_q)$ for a comparison between $k_4(G^\ast(q))$ and $k_4(G_q)$.
	\section{Some properties of the graph and proof of Theorem \ref{thm1}}
	Let $n=p^\alpha$ or $2p^\alpha$, where $p\equiv 1\pmod 8$ and $\alpha\geq 1$, and let $G^\ast(n)$ be the Peisert-like graph of order $n$. Since $4$ divides the order of $\mathbb{Z}_n^\ast$ and $\mathbb{Z}_n^\ast$ is cyclic, there exists a character of order $4$ in $\widehat{\mathbb{Z}_n^\ast}$; let us fix such a character and call it $\chi_4$. Let $\varphi=\chi_4^2$ be the quadratic character. Let $\varepsilon$ denote the trivial character defined by
	\begin{align*}
		\varepsilon(x) = \left\{ \begin{array}{cl} 1, &\text{if } x\in \mathbb{Z}_{n}^{*};\\
			0, & \text{otherwise. }\end{array}\right. 
	\end{align*}
	Let $\mathbb{Z}_n^\ast=\langle g\rangle $ and let $h=1-\chi_4(g)$. Then, for $x\in\mathbb{Z}_n^\ast$, we observe that
	\begin{align}\label{war}
		\frac{2+h\chi_4(x)+\overline{h}\overline{\chi_4}(x)}{4} = \left\{
		\begin{array}{lll}
			1, & \hbox{if $x\in \langle g^4\rangle \cup g\langle g^4\rangle $}; \\
			0, & \hbox{\text{otherwise.}}
		\end{array}
		\right.
	\end{align} 
	Now, we prove some basic properties of $G^\ast(n)$. Let $\phi$ denote the Euler totient function.
	\begin{proposition}
		Let $n=p^\alpha$ or $2p^\alpha$, where $p\equiv 1\pmod 8$ and $\alpha$ is a positive integer. Let $G^\ast(n)$ be the Peisert-like graph of order $n$. Then, $G^\ast(n)$ is regular of degree $\frac{p^{\alpha-1}(p-1)}{2}$. Also, the number of edges in $G^\ast(n)$ is equal to $\frac{n\phi(n)}{4}$.
	\end{proposition}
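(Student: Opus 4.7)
The plan is to prove regularity by translation invariance and then invoke the handshake lemma. For any vertex $x \in \mathbb{Z}_n$, the neighbors of $x$ are precisely those $y \in \mathbb{Z}_n$ with $x - y \in \langle g^4\rangle \cup g\langle g^4\rangle$. The map $y \mapsto x - y$ is a bijection from the neighborhood of $x$ onto the connection set $S := \langle g^4\rangle \cup g\langle g^4\rangle$, so $\deg(x) = |S|$ for every vertex $x$, which already gives regularity.

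Next I would compute $|S|$. Since $\langle g^4\rangle$ and $g\langle g^4\rangle$ are cosets of the subgroup $\langle g^4\rangle \le \mathbb{Z}_n^\ast$, they are either equal or disjoint; but $g \notin \langle g^4\rangle$ (else $g$ would be a fourth power and fail to generate $\mathbb{Z}_n^\ast$), so the union is disjoint. Thus $|S| = 2\,|\langle g^4\rangle|$. For both $n = p^\alpha$ and $n = 2p^\alpha$ we have $|\mathbb{Z}_n^\ast| = \phi(n) = p^{\alpha-1}(p-1)$, which is divisible by $4$ because $p \equiv 1 \pmod 8$, so $|\langle g^4\rangle| = \frac{p^{\alpha-1}(p-1)}{4}$ and hence $|S| = \frac{p^{\alpha-1}(p-1)}{2}$. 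This establishes the claimed degree.

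Finally, the handshake lemma gives
\begin{equation*}
|E(G^\ast(n))| = \frac{1}{2}\sum_{x\in V} \deg(x) = \frac{1}{2} \cdot n \cdot \frac{p^{\alpha-1}(p-1)}{2} = \frac{n\,\phi(n)}{4},
\end{equation*}
since $\phi(n) = p^{\alpha-1}(p-1)$ in both cases under consideration. The only subtle point, already addressed in the discussion preceding Definition \ref{defn1}, is that $S$ is symmetric ($-S = S$), so counting ordered pairs and dividing by two is legitimate; this holds because $-1 \in \langle g^4\rangle$ when $p \equiv 1 \pmod 8$. There is no real obstacle here; the whole argument is a coset count plus handshaking.
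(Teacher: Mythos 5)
Your proof is correct. The difference from the paper is in how the degree is computed: the paper notes in one sentence that the degree equals the cardinality of the connection set $\langle g^4\rangle\cup g\langle g^4\rangle$, but the computation it actually carries out evaluates the character-sum indicator $\frac{2+h\chi_4(a-x)+\overline{h}\overline{\chi_4}(a-x)}{4}$ over the neighborhood and uses $\sum\chi_4(a-x)=0$ to get $\frac{p^{\alpha-1}(p-1)}{2}$; you instead do the elementary coset count directly, observing that $\langle g^4\rangle$ has index $4$ in the cyclic group $\mathbb{Z}_n^\ast$ of order $\phi(n)=p^{\alpha-1}(p-1)$ (using $4\mid p-1$), that $g\notin\langle g^4\rangle$ makes the two cosets disjoint, and hence $|S|=\frac{\phi(n)}{2}$. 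Both arguments are complete; yours is shorter and more transparent for this particular statement, while the paper's character-sum formulation is a deliberate warm-up for the same indicator machinery it later uses to count triangles and $4$-cliques. Your remarks on translation invariance for regularity, on $\phi(2p^\alpha)=\phi(p^\alpha)$, and on the symmetry $-S=S$ (from $-1\in\langle g^4\rangle$, already settled when the graph was defined) are all accurate, and the handshake step matches the paper's edge count.
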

	\begin{proof}
		Let $\mathbb{Z}_n^\ast=\langle g\rangle$. By the definition of $G^\ast(n)$, the degree of a vertex is equal to the cardinality of the set $\langle g^4\rangle\cup g\langle g^4\rangle$. Alternatively, we may use a character sum to deduce the same.
		Let $a\in\mathbb{Z}_n $. Then, using \eqref{war}, we find that the degree of the vertex $a$ is
		\begin{align*}
			deg(a)=\sum\limits_{x-a\in\mathbb{Z}_n^\ast}\dfrac{2+h\chi_4(a-x)+\overline{h}\overline{\chi_4}(a-x)}{4}=\dfrac{p^{\alpha-1}(p-1)}{2}.
		\end{align*}
		The last equality is obtained by using  $\sum\limits_{x-a\in\mathbb{Z}_n^\ast}\chi_4(a-x)=\sum\limits_{x-a\in\mathbb{Z}_n^\ast}\overline{\chi_4}(a-x)=0$. 
		The number of edges in $G^\ast(n)$ is $\frac{1}{2}\times \sum \text{deg}=\frac{1}{2}\frac{p^{\alpha-1}(p-1)}{2}\times n=\frac{n\phi(n)}{4}$.
		This completes the proof of the proposition.
	\end{proof}
	Alternatively, one can find the number of edges in $G^\ast(n)$ by evaluating the following character sum: \begin{align*}
		\frac{1}{2} \sum\limits_x \sum\limits_{y-x\in\mathbb{Z}_n^\ast} \frac{2+h\chi_4(y-x)+\overline{h}\overline{\chi_4}(y-x)}{4}.
	\end{align*}
	A graph $G$ is called vertex-transitive if given any two vertices $v_1$ and $v_2$, there exists a graph automorphism $f: G\rightarrow G$ such that $f(v_1)=v_2$.
	\begin{proposition}
		Let $n=p^\alpha$ or $2p^\alpha$, where $p\equiv 1\pmod 8$ and $\alpha\geq 1$, and let $G^\ast(n)$ be the Peisert-like graph of order $n$. Then, $G^\ast(n)$ is vertex-transitive.
	\end{proposition}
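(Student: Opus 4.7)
The plan is to exhibit an explicit automorphism of $G^\ast(n)$ sending any prescribed vertex $v_1$ to any other prescribed vertex $v_2$; the natural candidate is translation by the difference $v_2 - v_1$.

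More precisely, given $v_1, v_2 \in \mathbb{Z}_n$, I would define the map $f: \mathbb{Z}_n \to \mathbb{Z}_n$ by $f(x) = x + (v_2 - v_1)$. First I would check that $f$ is a bijection on the vertex set $\mathbb{Z}_n$, which is immediate since translation by a fixed element of the additive group $\mathbb{Z}_n$ has inverse given by translation by the negative of that element. Next I would verify that $f$ preserves edges: for any $x, y \in \mathbb{Z}_n$, one has $f(x) - f(y) = x - y$, so $f(x) - f(y) \in \langle g^4\rangle \cup g\langle g^4\rangle$ if and only if $x - y \in \langle g^4\rangle \cup g\langle g^4\rangle$. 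By Definition \ref{defn1}, this means $xy$ is an edge of $G^\ast(n)$ precisely when $f(x)f(y)$ is. Hence $f$ is a graph automorphism with $f(v_1) = v_2$, proving vertex-transitivity.

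There is no real obstacle here: the argument is entirely analogous to the proof that Cayley graphs on abelian groups are vertex-transitive, and $G^\ast(n)$ is indeed the Cayley graph of the additive group of $\mathbb{Z}_n$ with connection set $S = \langle g^4\rangle \cup g\langle g^4\rangle$ (symmetric, as ensured by the condition $p \equiv 1 \pmod 8$ which makes $-1 \in \langle g^4\rangle$). The only thing to be careful about is that the verification uses only the additive structure of $\mathbb{Z}_n$ and the translation invariance of the edge condition, which depends purely on the difference $x - y$; no use of the multiplicative structure or of the particular coset decomposition is needed beyond what is already encoded in the definition.
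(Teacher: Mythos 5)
Your proof is correct and takes essentially the same route as the paper, which also cites the Cayley-graph structure and exhibits the translation automorphism $x\mapsto x+a$ (with $a=v_2-v_1$) to send one vertex to another. Your additional checks (bijectivity, translation invariance of the edge condition, and symmetry of the connection set via $-1\in\langle g^4\rangle$) are all fine and simply make explicit what the paper leaves implicit.
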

	\begin{proof}
		$G^\ast(n)$ being a Cayley graph, is vertex-transitive; see \cite[Theorem 3.1.2]{god}. We have the following explicit automorphism to demonstrate the same. Let $a\in\mathbb{Z}_n$. Then, the map
		\begin{align*}
			V(G^\ast(n))&\rightarrow V(G^\ast(n))\\
			x&\mapsto x+a
		\end{align*}
		is an automorphism. 
	\end{proof}
	We note here that unlike the Peisert graph, the Peisert-like graph is not self-complementary unless the number of vertices in the graph is a prime. This is because a self complementary graph on $n$ vertices must necessarily have $\frac{n(n-1)}{4}$ edges, but for $n=p^\alpha$ or $n=2p^\alpha$, $\phi(n)\neq n-1$ unless $n$ is a prime.
	We also observe that the Peisert-like graph, although never a cycle graph, has a spanning cycle. So, it is a connected graph. This is because, for each vertex $x\in\mathbb{Z}_n$, the vertices $x+1$ and $x-1$ are both adjacent to $x$.
	\par
	Next, we compute the number of triangles in the graph $G^\ast(n)$. For this purpose we take $n=p^\alpha$ ($p\equiv 1\pmod 8$ being a prime) only, since for the case $n=2p^\alpha$ there are no cliques of order greater than $2$. We first prove the following lemma.
	\begin{lemma}\label{lemz}
		Let $n=p^\alpha$, where $p\equiv 1\pmod 8$ is a prime and $\alpha$ is a positive integer. Let $\chi_4$ be a character on $\mathbb{Z}_n^\ast$ of order $4$. Then, $\chi_4$ has period $p$.
	\end{lemma}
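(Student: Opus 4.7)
The plan is to reduce the claim to a statement about kernels of characters on the cyclic group $\mathbb{Z}_{p^\alpha}^\ast$. Recall that ``$\chi_4$ has period $p$'' should be interpreted in the standard Dirichlet sense, namely that $\chi_4(a)=\chi_4(b)$ whenever $\gcd(ab,p)=1$ and $a\equiv b\pmod p$. Equivalently, writing $H$ for the subgroup
\[
H=\{a\in\mathbb{Z}_{p^\alpha}^\ast:\; a\equiv 1\pmod p\},
\]
the statement is that $H\subseteq\ker\chi_4$, because then for any $a,b$ coprime to $p$ with $a\equiv b\pmod p$ we have $ab^{-1}\in H$, whence $\chi_4(a)\overline{\chi_4(b)}=\chi_4(ab^{-1})=1$.

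First I would record the structural facts. Since $p$ is odd, $\mathbb{Z}_{p^\alpha}^\ast$ is cyclic of order $\phi(p^\alpha)=p^{\alpha-1}(p-1)$, so with $\mathbb{Z}_{p^\alpha}^\ast=\langle g\rangle$ the subgroup $H$ is precisely the unique subgroup of order $p^{\alpha-1}$, namely $\langle g^{p-1}\rangle$ (this is just the kernel of the natural surjection $\mathbb{Z}_{p^\alpha}^\ast\twoheadrightarrow\mathbb{Z}_p^\ast$, whose image has order $p-1$).

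Next I would show that $\chi_4$ kills $H$. The image of $\chi_4$ lies in the group of fourth roots of unity, so for every $h\in H$ the value $\chi_4(h)$ is a root of unity whose order divides both $4$ and the order of $h$ in $\mathbb{Z}_{p^\alpha}^\ast$, which in turn divides $|H|=p^{\alpha-1}$. Since $p$ is odd we have $\gcd(4,p^{\alpha-1})=1$, forcing $\chi_4(h)=1$. Thus $H\subseteq\ker\chi_4$, and combining this with the reduction in the first paragraph completes the argument.

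No real obstacle is expected: the whole lemma is essentially the observation that a character of order coprime to $p$ on the cyclic group $\mathbb{Z}_{p^\alpha}^\ast$ must factor through the natural projection to $\mathbb{Z}_p^\ast$. The only care needed is bookkeeping in the conventions for ``period'' versus ``conductor,'' and checking that $ab^{-1}$ really lies in $H$ when $a\equiv b\pmod p$ and both are units modulo $p^\alpha$, which is immediate because reduction mod $p$ is a ring homomorphism.
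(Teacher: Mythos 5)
Your proof is correct, but it takes a genuinely different route from the paper. The paper works concretely with elements of the form $1+px^{-1}$: using the binomial theorem and a Legendre-formula estimate of $v_p(i!)$, it shows $p^\alpha\mid\binom{\phi(n)/4}{i}(px^{-1})^i$ for $1\le i\le \phi(n)/4$, hence $(1+px^{-1})^{\phi(n)/4}\equiv 1\pmod{p^\alpha}$, which by cyclicity forces $1+px^{-1}\in\langle g^4\rangle=\ker\chi_4$ and so $\chi_4(x+p)=\chi_4(x)$. You instead argue structurally: the units congruent to $1$ modulo $p$ form the kernel $H$ of the surjection $\mathbb{Z}_{p^\alpha}^\ast\twoheadrightarrow\mathbb{Z}_p^\ast$, a subgroup of order $p^{\alpha-1}$; since $\chi_4(h)$ is simultaneously a fourth root of unity and a root of unity of order dividing $|H|=p^{\alpha-1}$, and $\gcd(4,p^{\alpha-1})=1$, every $h\in H$ lies in $\ker\chi_4$, and the periodicity statement follows from $ab^{-1}\in H$ when $a\equiv b\pmod p$ (the non-unit case being trivial since both values vanish). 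Your version is shorter, avoids the explicit divisibility computation, and proves more: any Dirichlet character mod $p^\alpha$ of order coprime to $p$ factors through $\mathbb{Z}_p^\ast$, with no use of $p\equiv 1\pmod 8$ beyond the existence of $\chi_4$. What the paper's computation buys is an explicit, self-contained congruence argument in the style of the analogous lemmas in the authors' earlier work on Paley-type graphs, exhibiting directly that $1+px^{-1}$ is a fourth power in $\mathbb{Z}_{p^\alpha}^\ast$; both proofs ultimately establish the same inclusion $H\subseteq\langle g^4\rangle$.
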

	\begin{proof}
		The proof goes along similar lines as in Lemmas 2.6 and 2.7 in \cite{bhowmik2020paley}. Let $\mathbb{Z}_n^\ast=\langle g\rangle $ and let $x\in\mathbb{Z}_n$. The result holds if $p\mid x$, so let us assume that $x\in\mathbb{Z}_n^\ast$.
		Let $x^{-1}$ denote the multiplicative inverse of $x$ in $\mathbb{Z}_n^\ast$. Then by the binomial theorem, 
		\begin{align}\label{bina}
			(1+px^{-1})^\frac{\phi(n)}{4}=\sum_{i=0}^{\frac{\phi(n)}{4}}{\frac{\phi(n)}{4}\choose i}(p x^{-1})^i.
		\end{align}
		Now, we show that 
		\begin{align}\label{key}
			p^\alpha\mid {\frac{\phi(n)}{4}\choose i}(p x^{-1})^i\text{ for } i=1,\ldots,\frac{\phi(n)}{4}.
		\end{align}
		For $\alpha\leq i\leq \frac{\phi(n)}{4}$, \eqref{key} is evident. So, we assume that $1\leq i\leq \alpha-1$.
		To this end, we observe that
		\begin{align*}
			{\frac{\phi(n)}{4}\choose i}=\dfrac{\frac{\phi(n)}{4}\left( \frac{\phi(n)}{4}-1\right)\cdots \left( \frac{\phi(n)}{4}-i+1\right) }{i!}
		\end{align*}
		where $\frac{\phi(n)}{4}=p^{\alpha-i}p^{i-1}\left( \frac{p-1}{4}\right) $, therefore to show \eqref{key} it is sufficient to show that $p^i$ does not divide $i!$. Let $v_p(i!)$ be the highest power of $p$ dividing $i!$, and let $\sigma_p(i)$ be the sum of digits of the base-$p$ representation of $i$. By Legendre's formula, $v_p(i!)=\sum\limits_{k=1}^{\infty}\lfloor\frac{i}{p^k}\rfloor$, from which it can be deduced that $v_p(i!)=\dfrac{i-\sigma_p(i)}{p-1}$. If $p^i$ divides $i!$ then $v_p(i)\geq i$, that is, $\dfrac{i-v_p(i)}{p-1}\geq i$, which is not possible. This proves \eqref{key}. Thus, \eqref{bina} yields $(1+px^{-1})^{\frac{\phi(n)}{4}}\equiv 1\pmod {p^\alpha}$. So, if $1+p x^{-1}=g^t$ in $\mathbb{Z}_n^\ast$ for some $t\in\mathbb{Z}$, then $1=(1+px^{-1})^\frac{\phi(n)}{4}=g^{\frac{t\phi(n)}{4}}$, which implies that $\phi(n)\mid\frac{t\phi(n)}{4}$, which gives $4\mid t$ and hence, $1+px^{-1}\in \langle g^4\rangle $. This means that $\chi_4(1+p x^{-1})=1$, that is, $\chi_4(x+p)=\chi_4(x)$, completing the proof of the lemma.
	\end{proof}
	Now, we prove Theorem \ref{thm1}.
	\begin{proof}[Proof of Theorem \ref{thm1}] Let $k_3(G^\ast(p^\alpha),0)$ denote the number of triangles in $G^\ast(p^\alpha)$ containing the vertex $0$. Since $G^\ast(p^\alpha)$ is vertex-transitive,  so
		\begin{align}\label{000}
			k_3(G^\ast(p^\alpha))=\frac{p^\alpha}{3}\times k_3(G^\ast(p^\alpha),0).
		\end{align}
		Recall that $\mathbb{Z}_{p^\alpha}^\ast=\langle g\rangle$ and $h=1-\chi_4(g)$. Now, using \eqref{war} we have
		\begin{align}\label{00}
			k_3(G^\ast(p^\alpha),0)=\frac{1}{2}\sum\limits_{x\in\mathbb{Z}_{p^\alpha}^\ast}\sum\limits_{y,x-y\in \mathbb{Z}_{p^\alpha}^\ast}&\left[\frac{2+h\chi_4(x)+\overline{h}\overline{\chi_4}(x)}{4} \times \frac{2+h\chi_4(y)+\overline{h}\overline{\chi_4}(y)}{4}\right.\notag\\ &\left.\times\frac{2+h\chi_4(x-y)+\overline{h}\overline{\chi_4}(x-y)}{4}\right].
		\end{align}
		We shall use the fact that $\chi_4(-1)=1$ since $-1\in\langle g^4\rangle$. Firstly, we evaluate the sum in \eqref{00} indexed by $y$. We have
		\begin{align}\label{0}
			&\sum\limits_{y,x-y\in \mathbb{Z}_{p^\alpha}^\ast}[2+h\chi_4(y)+\overline{h}\overline{\chi_4}(y)][2+h\chi_4(x-y)+\overline{h}\overline{\chi_4}(x-y)] \notag \\
			&=\sum\limits_{y,x-y\in \mathbb{Z}_{p^\alpha}^\ast} [4+2h\chi_4(y)+2\overline{h}\overline{\chi_4}(y)+2h\chi_4(x-y)+2\overline{h}\overline{\chi_4}(x-y)+h^2\chi_4(y(x-y))\notag \\&\hspace{1.5cm}+|h|^2\chi_4(y)\overline{\chi_4}(x-y)
			+|h|^2\overline{\chi_4}(y)\chi_4(x-y)+\overline{h}^2 \overline{\chi_4}(y(x-y))].
		\end{align}
		Using Lemma \ref{lemz}, we find that 
		\begin{align}\label{1}
			\sum\limits_{y,x-y\in \mathbb{Z}_{p^\alpha}^\ast}\chi_4(x-y)&=\sum\limits_{x-y\in \mathbb{Z}_{p^\alpha}^\ast}\chi_4(x-y)-\sum\limits_{\substack{x-y\in\mathbb{Z}_{p^\alpha}^\ast\\p\mid y}}\chi_4(x-y)\notag \\
			&=-\sum_{t=0}^{p^{\alpha-1}-1}\chi_4(x-pt)
			=-\sum_{t=0}^{p^{\alpha-1}-1}\chi_4(x)=-p^{\alpha-1}\chi_4(x),
		\end{align}
		and similarly
		\begin{align}\label{3}
			\sum\limits_{y,x-y\in \mathbb{Z}_{p^\alpha}^\ast}\chi_4(y)=-p^{\alpha-1}\chi_4(x).
		\end{align}
		Using the substitution $y\mapsto xy$ in the following sum, we have
		\begin{align}\label{4}
			\sum\limits_{y,x-y\in \mathbb{Z}_{p^\alpha}^\ast}\chi_4(y(x-y))=\sum\limits_{y\in\mathbb{Z}_{p^\alpha}} \chi_4(y(x-y))=\varphi(x) J(\chi_4,\chi_4).
		\end{align}
		Also, we find that
		\begin{align}\label{5}
			&	\sum\limits_{y,x-y\in \mathbb{Z}_{p^\alpha}^\ast} \chi_4(y)\overline{\chi_4}(x-y)=\sum\limits_{p\nmid y}\overline{\chi_4}\left( xy^{-1}-1\right),
		\end{align}
		where $y^{-1}$ denotes the multipicative inverse of $y$ in $\mathbb{Z}_n^\ast$.
		The following map
		\begin{align*}
			\{y\in\mathbb{Z}_{p^\alpha}:p\nmid y,x-y\}&\rightarrow \{z\in\mathbb{Z}_{p^\alpha}:p\nmid z, z+1\}\\
			y&\mapsto xy^{-1}-1
		\end{align*}
		is a bijection, and hence, $\eqref{5}$ yields
		\begin{align}\label{6}
			\sum\limits_{y,x-y\in \mathbb{Z}_{p^\alpha}^\ast} \chi_4(y)\overline{\chi_4}(x-y)=\sum\limits_{p\nmid z+1}\overline{\chi_4}(z)=-\sum\limits_{p\mid z+1}\overline{\chi_4}(z)=-p^{\alpha-1}.
		\end{align}
		Lastly, we have 
		\begin{align}\label{7}
			\sum\limits_{y,x-y\in \mathbb{Z}_{p^\alpha}^\ast}1=\sum\limits_{p\nmid y}1-\sum\limits_{\substack{p\nmid y\\p\mid x-y}}1=p^{\alpha-1}(p-2).
		\end{align}
		Employing \eqref{1} - \eqref{7} in $\eqref{0}$, and then combining with $\eqref{00}$ we find that
		\begin{align}\label{numb}
			k_3(G^\ast(p^\alpha),0)=\frac{1}{128}\sum\limits_{p\nmid x}[2+h\chi_4(x)+\overline{h}\overline{\chi_4}(x)] [A-B\chi_4(x)-\overline{B}\overline{\chi_4}(x)+C\varphi(x)],
		\end{align}
		where
		\begin{align*}
			A&=4(p-3)p^{\alpha-1},\\
			B&=4hp^{\alpha-1},\text{ and}\\
			C&=h^2 J(\chi_4,\chi_4)+\overline{h}^2 \cdot \overline{J(\chi_4,\chi_4)}.
		\end{align*}
		After expanding the expression inside the sum over $x$ and proceeding similarly as shown above, \eqref{numb} yields 
		\begin{align}\label{8}
			k_3(G^\ast(p^\alpha),0)	&=\frac{1}{128}[2A-\overline{B}h-B\overline{h}]\phi(p^\alpha)\notag \\
			&=\frac{1}{16}p^{2\alpha-2}(p-1)(p-5).
		\end{align}
		Finally, combining $\eqref{8}$ and $\eqref{000}$, we obtain the required result.
	\end{proof}
	\section{Hypergeometric functions for Dirichlet characters}\label{sechyp}
	In this section, we introduce hypergeometric functions having Dirichlet characters modulo $p^\alpha$ as arguments, where $p$ is an odd prime and $\alpha$ is a positive integer. Firstly, we study some character sums involving Dirichlet characters. 
	Let $s,t\in\mathbb{Z}_{p^\alpha}$. We define the function $\delta_s(t)$ as
	\begin{align}
		\delta_s(t)=\left\{ \begin{array}{cl} 1, &\text{if } s=t;\\
			0, & \text{otherwise. }\end{array}\right. 	
	\end{align}
	\begin{lemma}\label{le1}
		Let $q=p^\alpha$, where $p$ is an odd prime and $\alpha\geq 1$ is an integer. Let $A$ be a Dirichlet character mod $q$. For $x\in\mathbb{Z}_q$, we have
		\begin{align}\label{eqn-new-01}
			A(1+x)=\sum_{t=0}^{p^{\alpha-1}-1}A(1+tp)\delta_{tp}(x)+\frac{1}{\phi(q)}\sum_{\chi\in\widehat{\mathbb{Z}_q^\ast}} J(A,\overline{\chi})\chi(-x).		
		\end{align}
	\end{lemma}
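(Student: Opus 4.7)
The plan is to read the identity as a two-part ``Fourier-type'' decomposition of the function $x\mapsto A(1+x)$ on $\mathbb{Z}_q$: the sum over $t$ accounts for the values at the non-units $x=tp$ (where $1+x$ need not be accessible via a character expansion on $\mathbb{Z}_q^\ast$), while the $\chi$-sum is a character-theoretic reconstruction of $A(1+x)$ on the unit part of $\mathbb{Z}_q$. Accordingly, I would split the verification into two cases according to whether $p\mid x$ or $p\nmid x$, and show that in each case one of the two sums vanishes while the other recovers $A(1+x)$.

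For $p\nmid x$, the Kronecker delta $\delta_{tp}(x)$ vanishes identically so the first sum disappears. I would then unfold the Jacobi sum using its definition, swap the order of summation, and obtain
\[\frac{1}{\phi(q)}\sum_{\chi\in\widehat{\mathbb{Z}_q^\ast}}J(A,\overline{\chi})\chi(-x)=\frac{1}{\phi(q)}\sum_{y\in\mathbb{Z}_q}A(y)\sum_{\chi\in\widehat{\mathbb{Z}_q^\ast}}\overline{\chi}(1-y)\chi(-x).\]
If $p\mid 1-y$ then $\overline{\chi}(1-y)=0$ for every $\chi$, so those $y$ drop out; for $p\nmid 1-y$ both $1-y$ and $-x$ are units of $\mathbb{Z}_q$, and the inner sum equals $\sum_\chi\chi\bigl(-x(1-y)^{-1}\bigr)$. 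By the standard orthogonality relation for characters on $\mathbb{Z}_q^\ast$, this equals $\phi(q)$ when $-x\equiv 1-y\pmod q$ and zero otherwise, so the only surviving contribution comes from $y=1+x$, producing $A(1+x)$ after dividing by $\phi(q)$.

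For $p\mid x$, the element $-x$ is not a unit of $\mathbb{Z}_q$, so $\chi(-x)=0$ for every Dirichlet character $\chi$ mod $q$, and the entire $\chi$-sum collapses to zero. Writing $x=sp$ with $0\le s\le p^{\alpha-1}-1$, the delta in the first sum picks out only the term $t=s$, giving $A(1+sp)=A(1+x)$, which matches the left-hand side.

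The only point requiring mild care is the orthogonality step in the first case: one must observe that the vanishing of $\overline{\chi}(1-y)$ for $y\equiv 1\pmod p$ is exactly what is needed to restrict the $y$-sum to residues where $1-y$ is a unit, thereby permitting a clean application of $\mathbb{Z}_q^\ast$-orthogonality. Once this subtlety is recognized, the argument reduces to a direct bookkeeping computation from the definitions of the Jacobi sum and the Dirichlet characters.
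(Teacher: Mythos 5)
Your proposal is correct and uses essentially the same ingredients as the paper's proof — orthogonality of the characters in $\widehat{\mathbb{Z}_q^\ast}$ together with the definition of the Jacobi sum — only run in reverse: you evaluate the right-hand side and split into the cases $p\mid x$ and $p\nmid x$, whereas the paper expands $A(1+x)$ into its non-unit and unit parts via the deltas and then recognizes $\sum_{a}A(1+a)\overline{\chi}(a)=\chi(-1)J(A,\overline{\chi})$. The computation is the same in substance, so no gap.
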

	\begin{proof}
		For $a\in\mathbb{Z}_q^\ast$, we have
		\begin{align*}
			\frac{1}{\phi(q)}\sum_{\chi\in\widehat{\mathbb{Z}_q^\ast}} \chi(x)\overline{\chi}(a) = \left\{ \begin{array}{cl} 1, &\text{if } x=a;\\
				0, & \text{otherwise. }\end{array}\right. 	
		\end{align*}
		Hence, we have
		\begin{align*}
			A(1+x)&=\sum_{t=0}^{p^{\alpha-1}-1}A(1+tp)\delta_{tp}(x)+\sum_{a\in\mathbb{Z}_q^\ast}A(1+a)\delta_a(x)\\
			&=\sum_{t=0}^{p^{\alpha-1}-1}A(1+tp)\delta_{tp}(x)+\frac{1}{\phi(q)}\sum_{\chi\in\widehat{\mathbb{Z}_q^\ast}} \chi(x)\sum_{a\in\mathbb{Z}_q^\ast}A(1+a)\overline{\chi}(a).
		\end{align*}
		It is easy to see that
		\begin{align*}
			\sum_{a\in\mathbb{Z}_q}A(1+a)\overline{\chi}(a)=	\sum_{a\in\mathbb{Z}_q}A(1-a)\overline{\chi}(-a)=\chi(-1)J(A,\overline{\chi}),
		\end{align*}
		which completes the proof of the lemma.
	\end{proof}
	Greene observed in \cite{greene} that the finite field analogue of the binomial coefficient is the Jacobi sum. Following Greene, we define binomial coefficient for Dirichlet characters. 
	\begin{definition}
		Let $q=p^\alpha$, where $p$ is an odd prime and $\alpha\geq 1$ is an integer. For Dirichlet characters $A$ and $B$ mod $q$, we define $\binom{A}{B}:=\frac{B(-1)}{q}J(A,\overline{B})$.
	\end{definition}
	We can rewrite \eqref{eqn-new-01} in terms of binomial coefficients as follows.
	\begin{align}\label{bin}
		A(1+x)=\sum_{t=0}^{p^{\alpha-1}-1}A(1+tp)\delta_{tp}(x)+\frac{q}{\phi(q)}\sum_{\chi\in\widehat{\mathbb{Z}_q^\ast}} \binom{A}{\chi}\chi(x).
	\end{align}
	In the following lemma, we state some properties of the binomial coefficients.
	\begin{lemma}
		Let $q=p^\alpha$, where $p$ is an odd prime and $\alpha\geq 1$ is an integer. For Dirichlet characters $A$ and $B$ mod $q$, we have 
		\begin{align}
			\label{ab}	\binom{A}{B}&=\binom{A}{A\overline{B}};\\
			\label{abs}\binom{A}{B}&=\binom{\overline{A}B}{B}B(-1);\\
			\label{coeff}\binom{A}{B}&=\binom{\overline{B}}{\overline{A}}AB(-1).
		\end{align}
	\end{lemma}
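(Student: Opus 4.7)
My plan is to unpack the definition $\binom{A}{B} = \frac{B(-1)}{q} J(A, \overline{B})$ so that each of the three identities reduces to a claim about Jacobi sums, which I would then verify by an appropriate substitution in the defining sum $J(X,Y) = \sum_x X(x)\, Y(1-x)$. Throughout the argument I would use repeatedly that $B(-1)^2 = B(1) = 1$, so $B(-1) = \overline{B}(-1) = \pm 1$, which allows several sign factors to collapse.

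For \eqref{ab}, expanding both sides shows the claim is equivalent to $J(A, \overline{B}) = A(-1)\, J(A, \overline{A}B)$. I would prove this by the substitution $x \mapsto y = x/(x-1)$, an involution on $\{x \in \mathbb{Z}_q : p \nmid x,\ p \nmid (x-1)\}$. Writing $x = y/(y-1)$ gives $1 - x = -1/(y-1)$, and after using $\overline{A}(y-1) = A(-1)\, \overline{A}(1-y)$ and $\overline{B}(-1/(y-1)) = B(1-y)$ (the latter via $B(-1)^2 = 1$), the summand $A(x)\,\overline{B}(1-x)$ becomes $A(-1)\, A(y)\, \overline{A}B(1-y)$, from which the identity follows.

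For \eqref{abs}, the claim reduces (after canceling $B(-1)^2 = 1$) to $J(\overline{A}B, \overline{B}) = B(-1)\, J(A, \overline{B})$. Here I would use $x \mapsto 1/x$, a bijection on $\mathbb{Z}_q^\ast$ that preserves the condition $p \nmid (1-x)$. A short computation gives $\overline{A}B(1/x) = A(x)\, \overline{B}(x)$ and $\overline{B}((x-1)/x) = B(-1)\, \overline{B}(1-x)\, B(x)$; the spurious factors $B(x)$ then cancel, yielding the claim. Identity \eqref{coeff} is the easiest: after expansion it reduces to the symmetry $J(A, \overline{B}) = J(\overline{B}, A)$, which is immediate from the involution $x \mapsto 1-x$.

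The only real care required is to verify that each substitution is genuinely a bijection on the support of the summand, that is, on the set of $x$ where both characters in the Jacobi sum are nonzero. Since each map above is a M\"obius transformation with integer coefficients, the $p$-divisibility of the relevant linear forms is preserved, and any $x$ at which a character vanishes contributes $0$ to both sides and can be freely added to or removed from the index set. I expect this bookkeeping to be the main obstacle, but it is entirely mechanical.
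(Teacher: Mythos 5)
Your proposal is correct and follows essentially the same route as the paper: unpack $\binom{A}{B}=\frac{B(-1)}{q}J(A,\overline{B})$, reduce each identity to a Jacobi-sum identity, and verify it by a bijective change of variable on the support of the sum (the paper uses $x\mapsto x(1-x)^{-1}$ followed by $y\mapsto -y$ for \eqref{ab} and indicates the other two follow similarly, while you use the equivalent involutions $x\mapsto x(x-1)^{-1}$, $x\mapsto 1/x$ and $x\mapsto 1-x$). The only difference is this cosmetic choice of M\"obius substitution, so no further comment is needed.
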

	\begin{proof}
		We prove \eqref{ab}. By definition, 
		\begin{align}\label{rev1}
			\binom{A}{A\overline{B}}&=\frac{AB(-1)}{q}J(A, \overline{A}B)\notag\\
			&=\frac{AB(-1)}{q}\sum_{p\nmid x,1-x}A(x)\overline{A}B(1-x)\notag\\
			&=\frac{AB(-1)}{q} \sum_{p\nmid x,1-x}A(x(1-x)^{-1})B(1-x).
		\end{align}
		The following map
		\begin{align*}
			\{x\in\mathbb{Z}_q: p\nmid x,1-x\}&\rightarrow \{y\in\mathbb{Z}_q: p\nmid y,y+1\}\\
			x&\mapsto x(1-x)^{-1}
		\end{align*}
		is a bijection, so \eqref{rev1} yields
		\begin{align*}
			\binom{A}{A\overline{B}}&=\frac{AB(-1)}{q}\sum_{p\nmid y,y+1}A(y)\overline{B}(1+y)\\
			&=\frac{AB(-1)}{q}\sum_{p\nmid y,y+1}A(-y)\overline{B}(1-y)\\
			&=\frac{AB(-1)}{q}A(-1)J(A,\overline{B}),
		\end{align*}
		which equals $\binom{A}{B}$. This proves  \eqref{ab}. The proofs of \eqref{abs} and \eqref{coeff} follow in a similar fashion, using the definition of binomial coefficient and the bijection used in the proof of \eqref{ab}.
	\end{proof}
	The following definition can be considered as a $\mathbb{Z}_{p^\alpha}$-analogue for the integral representation of the classical hypergeometric series.
	\begin{definition}\label{def01}
		Let $q=p^\alpha$, where $p$ is an odd prime and $\alpha\geq 1$ is an integer. Let $A,B$ and $C$ be Dirichlet characters mod $q$ and let $\varepsilon$ be the trivial character mod $q$. Then, for $x\in\mathbb{Z}_q$, we define 	
		\begin{align*}
			{_{2}}F_{1}\left( \begin{array}{cc} A, &B \\ &C \end{array} |x \right):=\frac{\varepsilon(x)BC(-1)}{q}\sum_{y\in\mathbb{Z}_q}B(y)\overline{B}C(1-y)\overline{A}(1-xy).
		\end{align*}
	\end{definition}
	In the following lemma we express the hypergeometric function in terms of the binomial coefficients. This is an analogue of Theorem 3.6 in \cite{greene}.
	\begin{lemma}\label{cd}
		Let $q=p^\alpha$, where $p$ is an odd prime and $\alpha\geq 1$ is an integer. For Dirichlet characters $A,B$ and $C$ mod $q$, 
		\begin{align*}
			{_{2}}F_{1}\left( \begin{array}{cc} A, &B \\ &C \end{array} |x \right)=\frac{q}{\phi(q)}\sum_{\chi\in\widehat{\mathbb{Z}_q^\ast}} \binom{A\chi}{\chi}\binom{B\chi}{C\chi}\chi(x).
		\end{align*}
	\end{lemma}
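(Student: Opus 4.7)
The plan is to substitute the expansion \eqref{bin} for the factor $\overline{A}(1-xy)$ appearing in Definition \ref{def01}, applied with $\overline{A}$ in place of $A$ and $-xy$ in place of $x$. This splits the definition of $_2F_1$ into a delta part (coming from the $\sum_t \overline{A}(1+tp)\delta_{tp}(-xy)$ summand) and a character part (coming from the $\frac{q}{\phi(q)}\sum_\chi \binom{\overline{A}}{\chi}\chi(-xy)$ summand). First I would dispose of the case when $x$ is not a unit: both sides vanish, since $\varepsilon(x)=0$ kills the LHS while $\chi(x)=0$ for every Dirichlet character $\chi$ kills the RHS.

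For $x\in\mathbb{Z}_q^\ast$, the delta part also vanishes. For each fixed $t$, the condition $\delta_{tp}(-xy)=1$ forces $y\equiv -tpx^{-1}\pmod{q}$, so either $y=0$ (when $t=0$) or $p\mid y$ (when $t\neq 0$); in either case $B(y)=0$, and the entire delta part contributes zero.

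Next I would interchange summation in the character part and evaluate the inner sum over $y$ as $\sum_{y} B\chi(y)\cdot \overline{B}C(1-y) = J(B\chi,\overline{B}C)$. Converting back to a binomial coefficient via $J(X,\overline{Y}) = q\,\overline{Y}(-1)\binom{X}{Y}$, this equals $q\,\overline{B}C(-1)\binom{B\chi}{B\overline{C}}$, which by \eqref{ab} coincides with $q\,\overline{B}C(-1)\binom{B\chi}{C\chi}$. Then I would apply \eqref{abs} in the form $\binom{\overline{A}}{\chi} = \binom{A\chi}{\chi}\chi(-1)$, so that the factor $\chi(-1)\chi(-x)$ collapses to $\chi(x)$, and use that $(BC)(-1)\cdot \overline{B}C(-1) = C(-1)^2\, B(-1)\,\overline{B}(-1) = 1$ to see that the remaining scalars cancel. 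The claimed identity should then fall out.

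The main obstacle is bookkeeping: one must apply \eqref{ab} and \eqref{abs} in just the right order and track the several $\pm 1$ factors arising from $B(-1)$, $C(-1)$, and $\chi(-1)$. The genuinely substantive point, for which there is no counterpart in Greene's finite field setting, is that the extra delta term in \eqref{bin} automatically contributes zero because $B$ vanishes on non-units of $\mathbb{Z}_q$; once this is noted, the remainder of the argument parallels the finite field proof of Greene's Theorem 3.6.
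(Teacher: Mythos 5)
Your proposal is correct and follows essentially the same route as the paper: expand $\overline{A}(1-xy)$ via \eqref{bin}, observe that the delta term is annihilated (the paper phrases this as $\varepsilon(x)B(y)\delta_{tp}(-xy)=0$, which subsumes your case split on $x$), interchange the sums to recognize $J(B\chi,\overline{B}C)$, and convert back to binomial coefficients using \eqref{ab} and \eqref{abs}, with the sign factors cancelling exactly as you describe. The only differences are cosmetic (order in which \eqref{abs} is applied and the explicit treatment of non-unit $x$).
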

	\begin{proof}
		Let $y\in\mathbb{Z}_q$. By \eqref{bin}, we have
		\begin{align}\label{rev2}
			\overline{A}(1-xy)=\sum_{t=0}^{p^{\alpha-1}-1}\overline{A}(1+tp)\delta_{tp}(-xy)+\frac{q}{\phi(q)}\sum_{\chi\in\widehat{\mathbb{Z}_q^\ast}}\binom{\overline{A}}{\chi}\chi(-xy).
		\end{align}
		Using \eqref{abs}, \eqref{rev2} yields
		\begin{align}\label{rev3}
			\overline{A}(1-xy)=\sum_{t=0}^{p^{\alpha-1}-1}\overline{A}(1+tp)\delta_{tp}(-xy)+\frac{q}{\phi(q)}\sum_{\chi\in\widehat{\mathbb{Z}_q^\ast}}\binom{A\chi}{\chi}\chi(xy).
		\end{align}
		Substituting \eqref{rev3} in Definition \ref{def01} and noting that $\varepsilon(x)B(y)\delta_{tp}(-xy)=0$ for all $x$ and $y$ yields
		\begin{align*}
			{_{2}}F_{1}\left( \begin{array}{cc} A, &B \\ &C \end{array} |x \right)&=\frac{BC(-1)}{\phi(q)}\sum_{y\in\mathbb{Z}_q}\sum_{\chi\in\widehat{\mathbb{Z}_q^\ast}}\binom{A\chi}{\chi}	\chi(x) B\chi(y)\overline{B}C(1-y)\notag\\
			&=\frac{BC(-1)}{\phi(q)}\sum_{\chi\in\widehat{\mathbb{Z}_q^\ast}}\binom{A\chi}{\chi}J(B\chi,\overline{B}C)\chi(x)\notag\\
			&=\frac{q}{\phi(q)}\sum_{\chi\in\widehat{\mathbb{Z}_q^\ast}}\binom{A\chi}{\chi}\binom{B\chi}{B\overline{C}}\chi(x),
		\end{align*}
		and we complete the proof by using \eqref{ab}.
	\end{proof}
	We now define hypergeometric functions containing Dirichlet characters for any $n\geq 1$.
	\begin{definition}\label{nfn}
		Let $q=p^\alpha$, where $p$ is an odd prime and $\alpha\geq 1$ is an integer. For Dirichlet characters $A_0,$ $A_1,$ $\ldots,$ $A_n,$ and $B_1, \ldots, B_n$ mod $q$ and $x \in \mathbb{Z}_q$, the hypergeometric function $_{n+1}F_n$ is defined by $${_{n+1}} F_n\left(\begin{matrix}A_0,&A_1,& \ldots, &A_n&\\&B_1,& \ldots, &B_n&\end{matrix} | x \right):= \frac{q}{\phi(q)} \sum_{\chi\in\widehat{\mathbb{Z}_q^\ast}} {{A_0 \chi} \choose {\chi}} {{A_1 \chi} \choose {B_1 \chi}}\cdots {{A_n \chi} \choose {B_n \chi}} \chi(x).$$
	\end{definition}
	We have the following recursive formula, whose proof follows the same way as in the proof of Theorem 3.13 in \cite{greene}.
	\begin{lemma}\label{lemahyp}
		Let $q=p^\alpha$, where $p$ is an odd prime and $\alpha\geq 1$ is an integer. For Dirichlet characters $A_0,$ $A_1,$ $\ldots,$ $A_n,$ and $B_1, \ldots, B_n$ mod $q$ and $x \in \mathbb{Z}_q$, we have
		\begin{align*}
			&	{_{n+1}} F_n\left(\begin{matrix}A_0,&A_1,& \ldots, &A_n&\\&B_1,& \ldots, &B_n&\end{matrix}|x \right)\\
			&= \frac{A_nB_n(-1)}{q} \sum_{y} {_{n}} F_{n-1}\left(\begin{matrix}A_0,&A_1,& \ldots, &A_{n-1}&\\&B_1,& \ldots, &B_{n-1}&\end{matrix} | xy \right)A_n(y)\overline{A_n}B_n(1-y).
		\end{align*}
	\end{lemma}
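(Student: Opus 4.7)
The plan is to substitute the Definition \ref{nfn} for the $_{n}F_{n-1}$ on the right-hand side, interchange the order of summation so that the $y$-sum is innermost, recognize the resulting $y$-sum as a Jacobi sum, and then convert it into the missing binomial coefficient $\binom{A_n\chi}{B_n\chi}$ to produce exactly the definition of $_{n+1}F_n(\cdot|x)$.

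First I would expand the right-hand side using Definition \ref{nfn}, obtaining
\begin{align*}
\text{RHS}=\frac{A_nB_n(-1)}{\phi(q)} \sum_{\chi\in\widehat{\mathbb{Z}_q^\ast}} \binom{A_0\chi}{\chi}\binom{A_1\chi}{B_1\chi}\cdots\binom{A_{n-1}\chi}{B_{n-1}\chi}\chi(x) \sum_{y\in\mathbb{Z}_q} A_n\chi(y)\,\overline{A_n}B_n(1-y),
\end{align*}
after legitimately interchanging the finite sums and using $\chi(y)A_n(y)=A_n\chi(y)$. The inner sum is the Jacobi sum $J(A_n\chi,\overline{A_n}B_n)$.

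Next I would rewrite this Jacobi sum as a binomial coefficient. By the definition $\binom{P}{Q}=\frac{Q(-1)}{q}J(P,\overline{Q})$, taking $P=A_n\chi$ and $Q=A_n\overline{B_n}$ gives
\[
J(A_n\chi,\overline{A_n}B_n)=\frac{q}{A_n\overline{B_n}(-1)}\binom{A_n\chi}{A_n\overline{B_n}} = q\,A_n(-1)B_n(-1)\binom{A_n\chi}{A_n\overline{B_n}},
\]
where I used that $B_n(-1)=\pm 1$. Applying identity \eqref{ab} with $A=A_n\chi$ and $B=A_n\overline{B_n}$ (so that $A\overline{B}=B_n\chi$) converts $\binom{A_n\chi}{A_n\overline{B_n}}$ into $\binom{A_n\chi}{B_n\chi}$, which is precisely the factor needed to complete the product inside Definition \ref{nfn}.

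Finally I would collect the prefactors: the external $A_nB_n(-1)/q$ times the $q\,A_n(-1)B_n(-1)$ produced by the Jacobi-to-binomial conversion equals $1$, since $A_n(-1)^2=B_n(-1)^2=1$. What remains is exactly $\frac{q}{\phi(q)} \sum_{\chi} \binom{A_0\chi}{\chi}\binom{A_1\chi}{B_1\chi}\cdots\binom{A_n\chi}{B_n\chi}\chi(x)$, which by Definition \ref{nfn} is the left-hand side, finishing the proof. The only real point of care is the sign bookkeeping in step three together with the correct invocation of \eqref{ab} to align the bottom entry with $B_n\chi$ rather than $A_n\overline{B_n}$; everything else is a clean swap of sums and direct application of the definitions, and this is the same pattern as Greene's Theorem 3.13 adapted to the Dirichlet-character setting.
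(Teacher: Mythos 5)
Your proof is correct and is essentially the paper's argument run in reverse: the paper starts from the left-hand side, uses \eqref{ab} and the definition of the binomial coefficient to expand $\binom{A_n\chi}{B_n\chi}$ as the Jacobi sum $\frac{A_nB_n(-1)}{q}\sum_y A_n\chi(y)\overline{A_n}B_n(1-y)$ and then interchanges the $\chi$- and $y$-sums to recognize ${_n}F_{n-1}(\cdot\,|\,xy)$, whereas you expand the right-hand side and convert the Jacobi sum back into the binomial coefficient. Your sign bookkeeping and the invocation of \eqref{ab} to align the bottom entry with $B_n\chi$ are exactly right, so nothing further is needed.
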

	\begin{proof}
		Let $\chi\in\widehat{\mathbb{Z}_q^\ast}$.	Using \eqref{ab}, we find that
		\begin{align}\label{rev5}
			\binom{A_n\chi}{B_n\chi}&=\binom{A_n\chi}{A_n\overline{B_n}}\notag\\
			&=\frac{A_n B_n(-1)}{q}J(A_n\chi,\overline{A_n}B_n)\notag\\
			&=\frac{A_n B_n(-1)}{q}\sum_{y\in\mathbb{Z}_q}A_n\chi(y)\overline{A_n}B_n(1-y).
		\end{align}
		Then, using \eqref{rev5} in Definition \ref{nfn}, we have
		\begin{align*}
			&{_{n+1}} F_n\left(\begin{matrix}A_0,&A_1,& \ldots, &A_n&\\&B_1,& \ldots, &B_n&\end{matrix} | x \right)\\
			&=\frac{q}{\phi(q)}\hspace{-0.1cm} \sum_{\chi\in\widehat{\mathbb{Z}_q^\ast}}\hspace{-0.1cm}\left[\hspace{-0.1cm} {{A_0 \chi} \choose {\chi}}\hspace{-0.1cm} {{A_1 \chi} \choose {B_1 \chi}}\hspace{-0.1cm}\cdots\hspace{-0.1cm} {{A_{n-1} \chi} \choose {B_{n-1} \chi}} \chi(x)\hspace{-0.1cm}
			\left(\frac{A_n B_n(-1)}{q}\hspace{-0.1cm}\sum_{y\in\mathbb{Z}_q}A_n\chi(y)\overline{A_n}B_n(1-y)\right)\hspace{-0.1cm}\right]\\
			&=\frac{A_n B_n(-1)}{q}\hspace{-0.1cm}\sum_{y\in\mathbb{Z}_q}\hspace{-0.1cm}\left(\frac{q}{\phi(q)}\sum_{\chi\in\widehat{\mathbb{Z}_q^\ast}}\hspace{-0.1cm}{{A_0 \chi} \choose {\chi}}\hspace{-0.1cm} {{A_1 \chi} \choose {B_1 \chi}}\hspace{-0.1cm}\cdots\hspace{-0.1cm} {{A_{n-1} \chi} \choose {B_{n-1} \chi}} \chi(xy)\right)\hspace{-0.1cm}A_n(y)\overline{A_n}B_n(1-y),
		\end{align*}
		and we complete the proof by noting Definition \ref{nfn} again.
	\end{proof}
	We have the following corollary, which is an analogue of Corollary 3.14 in \cite{greene}.
	\begin{corollary}
		Let $q=p^\alpha$, where $p$ is an odd prime and $\alpha\geq 1$ is an integer. Let $A,B,C,D$ and $E$ be Dirichlet characters mod $q$ and let $\varepsilon$ be the trivial character mod $q$. Then,
		\begin{align}
			\label{rev6}{_{3}}F_{2} \left( \begin{array}{ccc} A,&B,&C\\&D,&E \end{array} | x \right)
			&=\frac{\varepsilon(x)BCDE(-1)}{q^2}\notag\\
			&\hspace{.2cm}\times \sum_{y,z}C(y)\overline{C}E(1-y)B(z)	\overline{B}D(1-z)\overline{A}(1-xyz), \\
			\label{corr2}	{_{3}}F_{2} \left( \begin{array}{ccc} A,&B,&C\\&D,&E \end{array} | x \right)
			&=\frac{\varepsilon(x)BD(-1)}{q^2}\notag \\
			&\hspace{.2cm}\times \sum_{y,z}A\overline{E}(y)\overline{C}E(1-y)B(z)	\overline{B}D(1-z)\overline{A}(y-xz).
		\end{align}
	\end{corollary}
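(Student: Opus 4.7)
The plan is to establish \eqref{rev6} by applying Lemma~\ref{lemahyp} once to reduce ${}_3F_2$ to ${}_2F_1$ and then substituting the integral representation in Definition~\ref{def01}; the companion identity \eqref{corr2} will follow from \eqref{rev6} by the change of variable $y \mapsto y^{-1}$ on $\mathbb{Z}_q^\ast$.

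First I would invoke Lemma~\ref{lemahyp} with $n=2$ and $(A_0,A_1,A_2;B_1,B_2)=(A,B,C;D,E)$, which gives
\begin{align*}
{_{3}}F_{2} \left( \begin{array}{ccc} A,&B,&C\\&D,&E \end{array} \Big| x \right) = \frac{CE(-1)}{q}\sum_{y} {_{2}}F_{1}\left( \begin{array}{cc} A, &B \\ &D \end{array} \Big| xy \right) C(y)\overline{C}E(1-y).
\end{align*}
Next I would expand each inner ${}_2F_1$ using Definition~\ref{def01} as $\frac{\varepsilon(xy)BD(-1)}{q}\sum_z B(z)\overline{B}D(1-z)\overline{A}(1-xyz)$. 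Because every Dirichlet character vanishes on non-units, $\varepsilon(y)C(y)=C(y)$, so the $\varepsilon(xy)$ factor collapses to $\varepsilon(x)$, and consolidating the values at $-1$ via $CE(-1)\cdot BD(-1)=BCDE(-1)$ yields \eqref{rev6}.

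For \eqref{corr2}, I would perform the bijection $y \mapsto y^{-1}$ on $\mathbb{Z}_q^\ast$ inside the double sum of \eqref{rev6}; this does not change the sum, as the summand vanishes whenever $y \notin \mathbb{Z}_q^\ast$. Using $1-y^{-1}=(y-1)/y$ and $1-xy^{-1}z=(y-xz)/y$, the $y$-dependent factors transform as
\begin{align*}
C(y^{-1}) &= \overline{C}(y), \qquad \overline{C}E(1-y^{-1}) = \overline{C}E(y-1)\cdot C\overline{E}(y),\\
\overline{A}(1-xy^{-1}z) &= \overline{A}(y-xz)\cdot A(y).
\end{align*}
Collecting the $y$-characters gives $\overline{C}\cdot C\overline{E}\cdot A = A\overline{E}$, and rewriting $\overline{C}E(y-1) = \overline{C}E(-1)\cdot \overline{C}E(1-y)$ extracts a factor $\overline{C}E(-1)$. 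Combined with the original prefactor via $BCDE(-1)\cdot \overline{C}E(-1)=BD(-1)\cdot C(-1)\overline{C}(-1)\cdot E(-1)^2=BD(-1)$, this is precisely \eqref{corr2}.

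The step requiring the most care is the bookkeeping of character values at $-1$ during the substitution and the two $(y-a)/y$ factorizations; a sign slip in either one would spoil the match with the stated formula, but the final cancellation collapses cleanly because $C(-1)\overline{C}(-1)=1$ and $E(-1)^2=E(1)=1$.
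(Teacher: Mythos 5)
Your proof is correct and follows essentially the same route as the paper: \eqref{rev6} is obtained by one application of Lemma~\ref{lemahyp} together with Definition~\ref{def01} (the $\varepsilon(xy)\to\varepsilon(x)$ collapse and the $BCDE(-1)$ bookkeeping are exactly right), and \eqref{corr2} then follows from the substitution $y\mapsto y^{-1}$ in the $y$-sum of \eqref{rev6}, which is precisely the paper's argument. Your explicit tracking of the values at $-1$, using $C(-1)\overline{C}(-1)=1$ and $E(-1)^2=1$, correctly fills in the details the paper leaves implicit.
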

	\begin{proof}
		The proof of \eqref{rev6} follows from Lemma \ref{lemahyp} and Definition \ref{def01}. To prove \eqref{corr2}, we note that
		\begin{align*}
			\{x\in\mathbb{Z}_q: p\nmid y,1-y\}&\rightarrow \{y'\in\mathbb{Z}_q: p\nmid y',1-y'\}\\
			y&\mapsto y^{-1}
		\end{align*}
		is a bijection. So, we use the substitution $y'=y^{-1}$ in the sum indexed by $y$ in \eqref{rev6} and readily obtain \eqref{corr2}.
		
	\end{proof}
	\section{some lemmas required to prove Theorem \ref{thm2}}
	In this section, we evaluate some character sums which we come across in the proof of Theorem \ref{thm2}. We also prove some relations between hypergeometric functions as in \cite{greene}. We note that if $p\equiv 1\pmod 8$ is a prime and $\alpha$ is a positive integer, then $\chi_4(-1)=1$, where $\chi_4$ is a Dirichlet character mod $p^\alpha$ of order $4$. The following three lemmas are analogues of Lemmas 2.2 to 2.6 in \cite{bhowmikpeisert}.
	\begin{lemma}\label{lemsec1}
		For a prime $p\equiv 1\pmod 8$ and an integer $\alpha\geq 1$, let $\chi_4$ be a Dirichlet character mod $p^\alpha$ of order $4$ and let $\varphi$ be the quadratic character mod $p^\alpha$. 
		Let $x\in \mathbb{Z}_{p^\alpha}^\ast$ be such that $p\nmid 1-x$. Let $\rho:=J(\chi_4,\chi_4)$. Then, we have 
		\begin{align*}
			& \sum\limits_{\substack{y\in \mathbb{Z}_{p^\alpha}^\ast\\p\nmid 1-y,x-y}}\chi_4^{i_1}(x-y)\chi_4^{i_2}(1-y)\chi_4^{i_3}(y) \\
			&= \left\{
			\begin{array}{lll}
				p^{\alpha-1}(p-3), & \hbox{if $(i_1, i_2, i_3)=(0, 0, 0)$}; \\
				-p^{\alpha-1}(1+\chi_4(x)), & \hbox{if $(i_1, i_2, i_3)=(0, 0, 1)$};\\
				-p^{\alpha-1}(1+\chi_4(1-x)), & \hbox{if $(i_1, i_2, i_3)=(0, 1, 0)$};\\
				-p^{\alpha-1}(\chi_4(1-x)+\chi_4(x)),& \hbox{if $(i_1, i_2, i_3)=(1, 0, 0)$};\\
				\rho-p^{\alpha-1}\chi_4(1-x)\chi_4(x),& \hbox{if $(i_1, i_2, i_3)=(0, 1, 1)$};\\
				\varphi(x)\rho-p^{\alpha-1}\chi_4(1-x), & \hbox{if $(i_1, i_2, i_3)=(1, 0, 1)$};\\
				\varphi(x-1)\rho-p^{\alpha-1}\chi_4(x), & \hbox{if $(i_1, i_2, i_3)=(1, 1, 0)$};\\
				-p^{\alpha-1}(1+\overline{\chi_4}(1-x)\chi_4(x)), & \hbox{if $(i_1, i_2, i_3)=(0, -1, 1)$};\\
				-p^{\alpha-1}(1+\overline{\chi_4}(1-x)), & \hbox{if $(i_1, i_2, i_3)=(-1, 0, 1)$};\\
				-p^{\alpha-1}(1+\overline{\chi_4}(x)), & \hbox{if $(i_1, i_2, i_3)=(-1, 1, 0)$}.
			\end{array}
			\right.
		\end{align*}		 
	\end{lemma}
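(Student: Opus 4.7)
\medskip

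\noindent\textbf{Proof proposal.} The plan is to handle all ten cases with a uniform template. For each triple $(i_1, i_2, i_3)$, I would first rewrite the constrained sum as a ``complete'' character sum over a larger domain (either all of $\mathbb{Z}_{p^\alpha}$ or all of $\mathbb{Z}_{p^\alpha}^\ast$), minus the contributions of the residue classes that must be excluded. The convention is that $\chi_4^{\pm 1}(a) = 0$ already when $p \mid a$, so a divisibility constraint corresponding to an index $i_j \neq 0$ is automatically enforced by the summand; only those constraints whose $i_j = 0$ need to be subtracted by hand.

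Completing the sum yields one of three standard objects. First, if all three indices are zero the complete sum is a counting sum, and inclusion-exclusion over the three pairwise disjoint conditions $p \mid y$, $p \mid 1-y$, $p \mid x-y$ gives $p^{\alpha-1}(p-3)$. Second, if the summand is a single nontrivial character of $y$, $1-y$, or $x-y$, a linear substitution turns the complete sum over $\mathbb{Z}_{p^\alpha}^\ast$ into $\sum_{z \in \mathbb{Z}_{p^\alpha}^\ast} \chi_4^{\pm 1}(z) = 0$. Third, for the two-character cases, a linear substitution (one of $y \mapsto xy$, $y \mapsto 1-y$, or $y \mapsto x-y$) identifies the complete sum with either $\varphi(x)\rho$, $\varphi(x-1)\rho$, or with $J(\chi_4, \overline{\chi_4})$. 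The latter I would evaluate by the further substitution $z = y(1-y)^{-1}$, which bijects $\{y : p \nmid y(1-y)\}$ with $\{z : p \nmid z(1+z)\}$ and collapses the Jacobi sum to $\sum_{p \nmid z,\, p \nmid 1+z} \chi_4(z) = 0 - p^{\alpha-1}\chi_4(-1) = -p^{\alpha-1}$, where I use $\chi_4(-1) = 1$ (since $p \equiv 1 \pmod 8$).

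The correction terms are all handled uniformly by Lemma~\ref{lemz}. Since $\chi_4$ has period $p$, the $p^{\alpha-1}$ values of $y$ in $\mathbb{Z}_{p^\alpha}$ with $y \equiv a \pmod p$ for a fixed residue $a$ coprime to $p$ all carry the same character values, so the entire ``subtracted block'' corresponding to $y \equiv a \pmod p$ collapses to the single term $p^{\alpha-1}\, \chi_4^{i_1}(x-a)\chi_4^{i_2}(1-a)\chi_4^{i_3}(a)$. Taking $a \in \{0, 1, x\}$ (whichever of these are to be excluded in the case under consideration) and simplifying via $\chi_4(-1) = 1$ (which turns $\chi_4(x-1)$ into $\chi_4(1-x)$, etc.) then assembles each entry of the table.

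The main obstacle is not mathematical depth but careful bookkeeping: tracking which congruence conditions are enforced automatically by the summand, which require manual subtraction, and propagating the simplification $\chi_4(-1) = 1$ through every substitution without sign errors. Once the template above is fixed, each of the ten cases reduces to a short, essentially mechanical calculation of exactly this form.
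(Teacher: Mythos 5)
Your proposal is correct and follows essentially the same route as the paper: the paper's (very brief) proof treats one representative case, $(i_1,i_2,i_3)=(0,-1,1)$, by exactly your template — complete the sum, peel off the excluded residue class as a single $p^{\alpha-1}$-weighted term via the period-$p$ property of $\chi_4$, and evaluate the completed sum through the bijection $y\mapsto y(1-y)^{-1}$ together with $\chi_4(-1)=1$, declaring the remaining cases straightforward. Your uniform bookkeeping of which constraints are enforced automatically versus subtracted by hand, and the evaluation of the complete sums as $0$, $\rho$, $\varphi(x)\rho$, $\varphi(x-1)\rho$, or $-p^{\alpha-1}$, reproduces all ten table entries correctly.
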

	\begin{proof}
		The proofs are straightforward, and we give one such instance. Let $(i_1, i_2, i_3)=(0, -1, 1)$. Since $\chi_4$ is of period $p$, we have 
		\begin{align}\label{ef}
			\sum\limits_{\substack{y\in \mathbb{Z}_{p^\alpha}^\ast\\p\nmid 1-y,x-y}} \chi_4(y)\overline{\chi_4}(1-y)=\sum_{p\nmid y,1-y} \chi_4(y)\overline{\chi_4}(1-y)-p^{\alpha-1}\chi_4(x)\overline{\chi_4}(1-x).
		\end{align}
		Now, the following map
		\begin{align*}
			&\{y\in\mathbb{Z}_{p^\alpha}: p\nmid y,y-1\}\rightarrow \{z\in\mathbb{Z}_{p^\alpha}: p\nmid z,1+z\}\\
			&\hspace{3cm}y\mapsto y(1-y)^{-1}
		\end{align*}
		is a bijection. Hence, 
		\begin{align}\label{new-eqn-02}
			\sum_{p\nmid y,1-y} \chi_4(y)\overline{\chi_4}(1-y)=-p^{\alpha-1}.
		\end{align}
		Combining \eqref{ef} and \eqref{new-eqn-02}, we complete the proof of the lemma when $(i_1, i_2, i_3)=(0, -1, 1)$.
	\end{proof}
	The proofs of the following two lemmas are similar to that of Lemma \ref{lemsec1} and involve the same techniques, so we state them without proofs.
	\begin{lemma}\label{lema1}
		Let $p\equiv 1\pmod 8$ be a prime and let $\alpha\geq 1$ be an integer. Let $\chi_4$ be a Dirichlet character mod $p^\alpha$ of order $4$ and let $\varphi$ be the quadratic character mod $p^\alpha$. Let $\xi:=J(\chi_4,\varphi)$. Then, we have 
		\begin{align*}
			&\sum\limits_{p\nmid x,1-x}\hspace{.2cm}\sum\limits_{p\nmid y,1-y,x-y} \chi_4^{i_1}(y)\chi_4^{i_2}(1-y)\chi_4^{i_3}(x-y)\\
			&=\left\{
			\begin{array}{lll}
				-2\xi p^{\alpha-1}, & \hbox{if $(i_1, i_2, i_3)=(1, 1, 1);$} \\
				2p^{2\alpha-2}, & \hbox{if $(i_1, i_2, i_3)= (1, 1, -1);$} \\
				-p^{\alpha-1}(\overline{\xi}-p^{\alpha-1}), & \hbox{if $(i_1, i_2, i_3)=(1, -1, 1);$}\\
				-p^{\alpha-1}(\xi-p^{\alpha-1}), & \hbox{if $(i_1, i_2, i_3)=(1, -1, -1).$}
			\end{array}
			\right.
		\end{align*}	
	\end{lemma}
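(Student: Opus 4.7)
The plan is to swap the order of summation so the inner sum runs over $x$, collapse that sum using the period-$p$ property of $\chi_4$ (Lemma~\ref{lemz}), and then recognize the remaining one-variable sum as a combination of Jacobi sums. This mirrors the strategy used in Lemma~\ref{lemsec1} and should run uniformly across all four index-tuples.

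First I would fix $y$ with $p\nmid y(1-y)$ and evaluate the inner sum $\sum_{x\colon p\nmid x,\,1-x}\chi_4^{i_3}(x-y)$, dropping the condition $p\nmid x-y$ because the character vanishes there. Writing this as the full sum minus two boundary pieces,
\[
\sum_{x\in\mathbb{Z}_{p^\alpha}}\chi_4^{i_3}(x-y)\ -\sum_{p\mid x}\chi_4^{i_3}(x-y)\ -\sum_{p\mid 1-x}\chi_4^{i_3}(x-y),
\]
the first vanishes by nontriviality of $\chi_4^{\pm 1}$, while parameterizing $x=pt$ or $x=1+pt$ in the remaining two, together with $\chi_4(-1)=1$ and Lemma~\ref{lemz}, reduces them to $p^{\alpha-1}\chi_4^{i_3}(y)$ and $p^{\alpha-1}\chi_4^{i_3}(1-y)$. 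Hence the inner sum equals $-p^{\alpha-1}\bigl(\chi_4^{i_3}(y)+\chi_4^{i_3}(1-y)\bigr)$.

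Substituting back, the original double sum becomes
\[
-p^{\alpha-1}\sum_{\substack{y\\ p\nmid y(1-y)}}\Bigl[\chi_4^{i_1+i_3}(y)\chi_4^{i_2}(1-y)+\chi_4^{i_1}(y)\chi_4^{i_2+i_3}(1-y)\Bigr].
\]
For each of the four tuples in the statement, the exponent pairs $(i_1+i_3,i_2)$ and $(i_1,i_2+i_3)$ fall into two types: either both entries are nonzero, giving a Jacobi sum which, using $J(A,B)=J(B,A)$ and $\overline{\varphi}=\varphi$, evaluates to $\xi$ or $\overline{\xi}$; or one entry is $0$, so the character factor becomes $\varepsilon$ and the sum reduces to $\sum_{p\nmid y,\,1-y}\chi_4^{\pm 1}(y)$ (or the $1-y$ analogue), which equals $-p^{\alpha-1}$ by the same boundary-subtraction argument. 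Combining the two pieces in each of the four cases yields the claimed values.

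The only real obstacle will be careful bookkeeping — tracking which combinations produce $\varepsilon$ versus $\chi_4^{\pm 1}$ or $\varphi$, keeping the signs from the $p\mid y$ and $p\mid 1-y$ exclusions straight, and matching $J(\varphi,\chi_4)=\xi$ with $J(\varphi,\overline{\chi_4})=\overline{\xi}$. No new technique beyond that used for Lemma~\ref{lemsec1} is needed.
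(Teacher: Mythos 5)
Your proposal is correct, and it uses precisely the techniques the paper points to (the paper states this lemma without proof, citing the method of Lemma \ref{lemsec1}): summing over $x$ first, collapsing that sum via the period-$p$ property of $\chi_4$ and the boundary terms $p\mid x$, $p\mid 1-x$, then evaluating the remaining $y$-sums as Jacobi sums $\xi$, $\overline{\xi}$ or as $-p^{\alpha-1}$. All four index cases check out under this computation, so nothing further is needed.
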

	\begin{lemma}\label{corr}
		Let $p\equiv 1\pmod 8$ be a prime and let $\alpha\geq 1$ be an integer. Let $\chi_4$ be a Dirichlet character mod $p^\alpha$ of order $4$ and let $\varphi$ be the quadratic character mod $p^\alpha$. Let $\rho:=J(\chi_4,\chi_4)$,  $\xi:=J(\chi_4,\varphi)$, $S_1:=-p^{\alpha-1}(\rho+\xi)$, $S_2:=-p^{\alpha-1}\rho+p^{2\alpha-2}$, $S_3:=|\rho|^2+p^{2\alpha-2},S_4:=p^{2\alpha-2}-p^{\alpha-1}\xi,S_5:=\rho^2-p^{\alpha-1}\overline{\xi}$ and $S_6:=2p^{2\alpha-2}$. Then, for  $i_1,i_2,i_3\in\{\pm 1\}$, we have the following tabulation of the values of the expression given below:
		\begin{align}\label{new-eqn1}
			\sum\limits_{\substack{x,y\in\mathbb{Z}_{p^\alpha},\\ p\nmid x, 1-x}}A_x \cdot \chi_4^{i_1}(y)\chi_4^{i_2}(1-y)\chi_4^{i_3}(x-y).
		\end{align}
		For $w\in\{1,\ldots,8\}$ and $z\in \{1,\ldots,7\}$, the $(w,z)$-th entry in the table corresponds to \eqref{new-eqn1},
		where $A_x$ is either $\chi_4(x),\overline{\chi_4}(x),\chi_4(1-x)$ or $\overline{\chi_4}(1-x)$ and the tuple $(i_1,i_2,i_3)$ depends on $w$.
		\begin{align*}
			\begin{array}{|l|l|l|l|l|l|l|}
				\cline {4 - 7 } \multicolumn{3}{c|}{} & \multicolumn{4}{|c|}{A_{x}} \\
				\hline i_{1} & i_{2} & i_{3} & \chi_4(x) & \overline{\chi_4}(x) & \chi_4(1-x) & \overline{\chi_4}(1-x) \\
				\hline
				1 & 1 & 1  & S_1 & S_1 & S_1 & S_1 \\
				1 & 1 & -1  & S_2 & \overline{S_2} & S_2 & \overline{S_2} \\
				1 & -1 & 1 & S_3 & S_6 & S_5 & \overline{S_4} \\
				1 & -1 & -1 & S_4 & \overline{S_5} & S_6 & S_3 \\
				-1 & 1 & 1 & S_5  & \overline{S_4} & S_3  & S_6 \\
				-1 & 1 & -1 & S_6  & S_3 & S_4  & \overline{S_5} \\
				-1 & -1 & 1 & S_2 & \overline{S_2} & S_2 & \overline{S_2} \\
				-1 & -1 & -1 & \overline{S_1}  & \overline{S_1} &\overline{S_1}  & \overline{S_1}\\
				\hline 
			\end{array}	
		\end{align*}
		For example, the $(3,6)$-th position contains the value $S_5=\rho^2-p^{\alpha-1}\overline{\xi}$. Here $w=3$ corresponds to $i_1=1,i_2=-1,i_3=1$; $z=6$ corresponds to the column $A_x=\chi_4(1-x)$. 
	\end{lemma}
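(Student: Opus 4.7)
The plan is to proceed analogously to the proof of Lemma \ref{lemsec1}, but now carrying through the additional outer summation over $x$. The strategy for each of the 32 table entries is to first evaluate the inner sum over $y$ as a function of $x$ using the period-$p$ property of $\chi_4$ (Lemma \ref{lemz}) together with bijective substitutions on $y$, and then to process the resulting outer sum over $x$ by the same techniques, now with the factor $A_x$ folded in.

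Concretely, for the inner sum I would begin with the substitution $y = xz$ (valid since $p \nmid x$), which rewrites the summand as $\chi_4^{i_1+i_3}(x)\,\chi_4^{i_1}(z)\,\chi_4^{i_3}(1-z)\,\chi_4^{i_2}(1-xz)$. Swapping the order of summation and absorbing $\chi_4^{i_1+i_3}(x)$ into $A_x$, the outer sum over $x$ becomes, for each fixed $z$, of exactly the shape handled in Lemma \ref{lemsec1} (with $1-xz$ playing the role of $1-y$, after a further rescaling $x \mapsto xz^{-1}$). Substituting the resulting closed forms back and summing over $z$ via one more application of the same toolkit reduces each entry to a linear combination of the Jacobi sums $\rho = J(\chi_4,\chi_4)$, $\xi = J(\chi_4,\varphi)$, their conjugates, and bilinear products of these, plus $p^{\alpha-1}$ and $p^{2\alpha-2}$ correction terms arising from the discarded $p \mid (\cdot)$ residues. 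Identifying $|\rho|^2$ where it occurs and $\rho^2$ where it occurs then produces precisely the symbols $S_1,\ldots,S_6$ in the table.

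To avoid carrying out 32 independent computations, I would exploit the following symmetries. Complex conjugation sends the $(i_1,i_2,i_3)$-case to the $(-i_1,-i_2,-i_3)$-case and therefore pairs rows 1 with 8, 2 with 7, 3 with 6, and 4 with 5; this is already visible in the table as the appearance of the barred symbols. The substitution $y \mapsto x-y$ interchanges $i_1$ and $i_3$, providing further identifications among rows, while the substitution $x \mapsto 1-x$ combined with $y \mapsto 1-y$ relates the columns $A_x = \chi_4(x)$ with $A_x = \chi_4(1-x)$ (and likewise for the conjugate pair). After these reductions, only a small number of representative entries need to be computed from scratch.

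The main obstacle is not any single case but the combinatorial bookkeeping. Each entry is a routine extension of the argument for Lemma \ref{lemsec1}, but keeping track of the exact $p^{\alpha-1}$- and $p^{2\alpha-2}$-order corrections, the correct placement of conjugates on $\rho$ and $\xi$, and the interplay of the symmetries across all 32 positions of the table is where the actual work lies; this is presumably why the authors chose to state the lemma without proof and rely on the reader's familiarity with the mechanics established in Lemma \ref{lemsec1}.
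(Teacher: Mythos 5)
Your overall strategy coincides with the paper's: the authors give no written proof of this lemma, saying only that it follows by the same techniques as Lemma \ref{lemsec1} (the period-$p$ property of $\chi_4$ from Lemma \ref{lemz}, scaling/translation bijections, reduction to the Jacobi sums $\rho$ and $\xi$ plus $p^{\alpha-1}$ and $p^{2\alpha-2}$ corrections), which is precisely the machinery you describe. Two of your three labor-saving symmetries are correct and consistent with the table: complex conjugation sends the $(i_1,i_2,i_3)$, $A_x$ entry to the $(-i_1,-i_2,-i_3)$, $\overline{A_x}$ entry, and $(x,y)\mapsto(1-x,1-y)$ (using $\chi_4(-1)=1$) sends the $(i_1,i_2,i_3)$ entry with $A_x=\chi_4^{\pm1}(x)$ to the $(i_2,i_1,i_3)$ entry with $A_x=\chi_4^{\pm1}(1-x)$.

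However, your third reduction, that ``$y\mapsto x-y$ interchanges $i_1$ and $i_3$,'' is false: under this substitution the middle factor becomes $\chi_4^{i_2}(1-x+y)$, not $\chi_4^{i_2}(1-y)$, so the transformed sum is no longer of the tabulated form, and the table itself refutes the identification it would force --- in the column $A_x=\chi_4(x)$ the $(1,1,-1)$ entry is $S_2=-p^{\alpha-1}\rho+p^{2\alpha-2}$ while the $(-1,1,1)$ entry is $S_5=\rho^2-p^{\alpha-1}\overline{\xi}$, and these differ (e.g.\ for $p=17$, $\alpha=1$, they are $2-4i$ and $-14-4i$). If you use this symmetry to prune cases you will equate unequal entries. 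A related overstatement: after the substitution $y=xz$, the $x$-sum for fixed $z$ is ``exactly of the shape of Lemma \ref{lemsec1}'' only in the columns $A_x=\chi_4^{\pm1}(x)$, where at most two nontrivial character factors in $x$ survive; in the columns $A_x=\chi_4^{\pm1}(1-x)$ with $i_1=i_3$ you would instead face $\sum_x\varphi(x)\chi_4^{\pm1}(1-x)\chi_4^{i_2}(1-xz)$, a two-point sum with three nonzero characters that Lemma \ref{lemsec1} does not evaluate. So the correct order of operations is to first move those columns to the $\chi_4^{\pm1}(x)$ columns via your valid $(x,y)\mapsto(1-x,1-y)$ symmetry, then decouple with $y=xz$. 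With the false symmetry discarded and the cases organized this way (conjugation pairs the rows, the $1-x$ symmetry pairs the columns, leaving eight entries to compute directly), your plan does produce the table by the paper's intended method.
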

	Now, we shall observe that equations $(2.9)$ to $(2.15)$ in \cite{bhowmikpeisert} also hold if we replace multiplicative characters on a finite field by Dirichlet characters mod $p^\alpha$. In \cite{bhowmikpeisert}, we used Lemma 2.8 therein, in the proof of finding cliques of order 4 in the Peisert graph; here we shall follow a similar approach. Recalling Definition \ref{nfn},  we have
	\begin{align}\label{3f21}
		{_{3}}F_{2} \left( \begin{array}{ccc} A,&B,&C\\&D,&E \end{array} | 1 \right) = \frac{p^\alpha}{\phi(p^\alpha)} \sum_{\chi\in\widehat{\mathbb{Z}_{p^\alpha}^\ast}} \binom{A\chi}{\chi} \binom{B\chi}{D\chi} \binom{C\chi}{E\chi} \chi(1).
	\end{align}
	Below are three lemmas whose proofs involve change of variable in the sum in \eqref{3f21}.
	The following lemma is a $\mathbb{Z}_{p^\alpha}$-analogue of Theorem 4.2 (i) in \cite{greene}.
	\begin{lemma}\label{lem001}
		Let $p$ be an odd prime and let $\alpha\geq 1$ be an integer. Let $A,B,C,D,E$ be Dirichlet characters mod $p^\alpha$. Then,
		\begin{align*}
			{_{3}}F_{2} \left( \begin{array}{ccc} A,&B,&C\\&D,&E \end{array} | 1 \right) = {_{3}}F_{2}\left( \begin{array}{ccc} B\overline{D}, &A\overline{D}, &C\overline{D}\\ &\overline{D}, &E\overline{D} \end{array} |1 \right) .
		\end{align*}
	\end{lemma}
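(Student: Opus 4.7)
The plan is to prove the identity by a direct reindexing of the character sum in Definition \ref{nfn}. Starting from the right-hand side,
\begin{align*}
{_{3}}F_{2}\left( \begin{array}{ccc} B\overline{D}, &A\overline{D}, &C\overline{D}\\ &\overline{D}, &E\overline{D} \end{array} |1 \right) = \frac{q}{\phi(q)} \sum_{\chi\in\widehat{\mathbb{Z}_q^\ast}} \binom{B\overline{D}\chi}{\chi} \binom{A\overline{D}\chi}{\overline{D}\chi} \binom{C\overline{D}\chi}{E\overline{D}\chi} \chi(1),
\end{align*}
I would perform the substitution $\chi \mapsto D\chi$, which is a bijection on the character group $\widehat{\mathbb{Z}_q^\ast}$ since $D\in\widehat{\mathbb{Z}_q^\ast}$. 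Under this change of variable, $B\overline{D}\chi$ becomes $B\chi$, $A\overline{D}\chi$ becomes $A\chi$, $\overline{D}\chi$ becomes $\chi$, and so on, while $(D\chi)(1) = D(1)\chi(1) = 1$.

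The resulting expression is
\begin{align*}
\frac{q}{\phi(q)} \sum_{\chi\in\widehat{\mathbb{Z}_q^\ast}} \binom{B\chi}{D\chi} \binom{A\chi}{\chi} \binom{C\chi}{E\chi} \chi(1),
\end{align*}
which, after reordering the three binomial factors, coincides with the definition of the left-hand side ${_{3}}F_{2}\left( \begin{smallmatrix} A,&B,&C\\&D,&E \end{smallmatrix} | 1 \right)$ as in \eqref{3f21}.

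There is essentially no obstacle here beyond checking that the bijective substitution is legitimate and that the evaluation of $\chi$ at $1$ is unaffected; no appeal to the binomial coefficient identities \eqref{ab}--\eqref{coeff} is needed for this particular permutation, because the twist by $D$ applied to all three pairs of character slots is perfectly compensated by the reindexing $\chi\mapsto D\chi$. In particular, the argument parallels the proof of Greene's Theorem 4.2(i) in \cite{greene}, with the only changes being that summation is taken over $\widehat{\mathbb{Z}_q^\ast}$ and the binomials are those defined via Jacobi sums on $\mathbb{Z}_q$. The entire proof is therefore a two-line computation once the substitution is made.
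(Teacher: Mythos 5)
Your proof is correct and is essentially the paper's own argument: the paper proves the lemma by applying the substitution $\chi\mapsto\overline{D}\chi$ to the left-hand side of \eqref{3f21}, which is exactly the inverse of your reindexing $\chi\mapsto D\chi$ on the right-hand side. No further comment is needed.
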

	\begin{proof}
		Employing the transformation $\chi \mapsto \overline{D}\chi$ in \eqref{3f21} yields the required result.	
	\end{proof}
	The following lemma is a $\mathbb{Z}_{p^\alpha}$-analogue of Theorem 4.2 (ii) in \cite{greene}.
	\begin{lemma}\label{lem002}
		Let $p$ be an odd prime and let $\alpha\geq 1$ be an integer. Let $A,B,C,D,E$ be Dirichlet characters mod $p^\alpha$. Then,
		\begin{align*}
			{_{3}}F_{2}\left( \begin{array}{ccc} A,&B,&C\\&D,&E \end{array} | 1 \right)= ABCDE(-1) {_{3}}F_{2}\left(\begin{array}{ccc} A, &A\overline{D}, &A\overline{E} \\ & A\overline{B}, &A\overline{C} \end{array} | 1 \right).	
		\end{align*}
	\end{lemma}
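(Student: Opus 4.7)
The plan is to follow the same strategy as in Lemma \ref{lem001}: manipulate the defining sum \eqref{3f21} by applying the binomial-coefficient identity \eqref{coeff} to each factor, and then perform two successive changes of variable in the summation index $\chi$.

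First, I would apply \eqref{coeff} to each of the three binomial coefficients appearing in the summand. For instance, $\binom{A\chi}{\chi}$ becomes $\binom{\overline{\chi}}{\overline{A}\,\overline{\chi}} \cdot A\chi(-1)\chi(-1) = \binom{\overline{\chi}}{\overline{A}\,\overline{\chi}}\cdot A(-1)$, using $\chi(-1)^2=\chi(1)=1$. Similarly $\binom{B\chi}{D\chi}=BD(-1)\binom{\overline{D}\,\overline{\chi}}{\overline{B}\,\overline{\chi}}$ and $\binom{C\chi}{E\chi}=CE(-1)\binom{\overline{E}\,\overline{\chi}}{\overline{C}\,\overline{\chi}}$. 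Multiplying the three scalar factors gives precisely the prefactor $ABCDE(-1)$ appearing on the right-hand side.

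At this stage every binomial coefficient inside the sum is indexed by $\overline{\chi}$. Substituting $\chi\mapsto\overline{\chi}$, which is a bijection of $\widehat{\mathbb{Z}_{p^\alpha}^\ast}$ onto itself, restores $\chi$ in each argument and produces $\binom{\chi}{\overline{A}\chi}\binom{\overline{D}\chi}{\overline{B}\chi}\binom{\overline{E}\chi}{\overline{C}\chi}$. A further substitution $\chi\mapsto A\chi$ then multiplies both arguments of each binomial coefficient by $A$, yielding exactly $\binom{A\chi}{\chi}\binom{A\overline{D}\chi}{A\overline{B}\chi}\binom{A\overline{E}\chi}{A\overline{C}\chi}$, which is the summand in the defining sum of ${_{3}}F_{2}\!\left(\begin{array}{ccc}A,&A\overline{D},&A\overline{E}\\&A\overline{B},&A\overline{C}\end{array}\big|\,1\right)$. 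Reinstating the normalizing factor $\tfrac{p^\alpha}{\phi(p^\alpha)}$ on both sides gives the stated identity.

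There is no genuine obstacle here; the argument is essentially symbol-pushing once \eqref{coeff} is in hand. The only mild care required is the bookkeeping with the $(-1)$-factors---verifying that each $\chi(-1)^2$ simplifies to $1$ so that only the signs attached to the fixed characters $A,B,C,D,E$ survive---and noting that the two changes of variable $\chi\mapsto\overline{\chi}$ and $\chi\mapsto A\chi$ are genuine bijections of $\widehat{\mathbb{Z}_{p^\alpha}^\ast}$, which is immediate from the group structure of the character group.
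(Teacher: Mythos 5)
Your proof is correct and is essentially the paper's argument: the paper performs the single substitution $\chi\mapsto\overline{A\chi}$ in \eqref{3f21} and then applies \eqref{coeff}, while you apply \eqref{coeff} first and then re-index via $\chi\mapsto\overline{\chi}$ followed by $\chi\mapsto A\chi$, which is the same computation with the steps reordered. The sign bookkeeping ($\chi^2(-1)=1$, leaving exactly the factor $ABCDE(-1)$) is handled correctly.
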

	\begin{proof} 
		We employ the transformation $\chi \mapsto \overline{A\chi}$ in \eqref{3f21}, and then use \eqref{coeff} to complete the proof.
	\end{proof}
	\begin{lemma}\label{lem003}
		Let $p$ be an odd prime and let $\alpha\geq 1$ be an integer. Let $A,B,C,D,E$ be Dirichlet characters mod $p^\alpha$. Then,
		\begin{align*}
			{_{3}}F_{2}\left( \begin{array}{ccc} A,&B,&C\\&D,&E \end{array} | 1 \right)= ABCDE(-1) {_{3}}F_{2}\left(\begin{array}{ccc} B\overline{D}, &B, &B\overline{E}\\ &B\overline{A}, &B\overline{C} \end{array} | 1 \right).
		\end{align*}		
	\end{lemma}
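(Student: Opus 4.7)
The plan is to mimic the proofs of Lemmas \ref{lem001} and \ref{lem002} by starting from the sum representation \eqref{3f21} and applying a carefully chosen change of variable in the character sum. Looking at the right-hand side of the target identity, the ``special'' first-binomial slot $\binom{A_0\chi}{\chi}$ must be filled by $B\overline{D}$, while $B$ should appear in the top of the second binomial with $B\overline{A}$ on the bottom. This motivates the substitution $\chi \mapsto \overline{B}\,\overline{\chi}$, a bijection of $\widehat{\mathbb{Z}_{p^\alpha}^\ast}$, because it sends the middle factor $\binom{B\chi}{D\chi}$ to $\binom{\overline{\chi}}{D\overline{B}\,\overline{\chi}}$, whose reversal via \eqref{coeff} produces precisely $\binom{B\overline{D}\chi}{\chi}$.

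Carrying out this substitution in \eqref{3f21} turns the three factors into $\binom{A\overline{B}\,\overline{\chi}}{\overline{B}\,\overline{\chi}}$, $\binom{\overline{\chi}}{D\overline{B}\,\overline{\chi}}$, and $\binom{C\overline{B}\,\overline{\chi}}{E\overline{B}\,\overline{\chi}}$. Applying \eqref{coeff} to each, I would obtain (respectively) $\binom{B\chi}{B\overline{A}\chi}\,A(-1)$, $\binom{B\overline{D}\chi}{\chi}\,BD(-1)$, and $\binom{B\overline{E}\chi}{B\overline{C}\chi}\,CE(-1)$. The scalar factors then collect into the single constant $ABCDE(-1)$, and the remaining character sum is, by Definition \ref{nfn}, exactly the $_3F_2$ on the right-hand side of the lemma. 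This completes the argument.

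The main (though ultimately routine) obstacle is the sign bookkeeping. Each invocation of \eqref{coeff} introduces two character values at $-1$, so one accumulates stray factors like $\overline{B}^2(-1)$ and $\overline{\chi}^2(-1)$. These collapse via the elementary fact that $\psi(-1) = \pm 1$ for every Dirichlet character $\psi$ modulo $p^\alpha$, so $\psi^2(-1) = 1$ and $\overline{\psi}(-1) = \psi(-1)$. Once this is observed, all extraneous phases cancel and only the clean factor $ABCDE(-1)$ survives; as a sanity check, one may alternatively derive the identity by feeding the conclusion of Lemma \ref{lem002} into Lemma \ref{lem001}, which should yield the same relation.
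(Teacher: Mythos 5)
Your proposal is correct and takes essentially the same route as the paper: the paper's proof consists precisely of applying the substitution $\chi\mapsto\overline{B\chi}$ in \eqref{3f21} and then invoking \eqref{coeff}, which is exactly your change of variable $\chi\mapsto\overline{B}\,\overline{\chi}$ with the resulting factors $A(-1)$, $BD(-1)$, $CE(-1)$ and the squared-character cancellations written out explicitly. Nothing further is needed.
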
	
	\begin{proof}
		Employing the transformation $\chi\mapsto \overline{B\chi}$ in \eqref{3f21}, and then using \eqref{coeff} we complete the proof.
	\end{proof}
	We further prove $\mathbb{Z}_{p^\alpha}$-analogues of certain transformations satisfied by the Greene's finite field hypergeometric functions. We shall evoke Definition \ref{def01} and Lemma \ref{lemahyp} multiple times.
	Following is an $\mathbb{Z}_{p^\alpha}$-analogue of (4.23) in \cite{greene}.	
	\begin{lemma}\label{lem004}
		Let $p$ be an odd prime and let $\alpha\geq 1$ be an integer. Let $A,B,C,D,E$ be Dirichlet characters mod $p^\alpha$. Then,
		\begin{align*}
			{_{3}}F_{2} \left( \begin{array}{ccc} A,&B,&C\\&D,&E \end{array} | 1 \right)= AE(-1) {_{3}}F_{2}\left(\begin{array}{ccc} A, &B, &\overline{C}E \\ &AB\overline{D}, &E \end{array} | 1 \right) .	
		\end{align*}
	\end{lemma}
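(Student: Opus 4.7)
My plan is to reduce the identity to an explicit equality of double character sums and then establish it by a change of variables. I would apply the integral representation \eqref{rev6} (at $x=1$) to expand the left-hand side, and apply \eqref{rev6} to the ${_3}F_2$ on the right-hand side with parameters $(A,B,\overline{C}E,AB\overline{D},E)$. Computing each scalar prefactor via $\chi(-1)^2=1$ and $\overline{\chi}(-1)=\chi(-1)$ for any Dirichlet character $\chi$, the left-hand prefactor becomes $\frac{BCDE(-1)}{q^2}$, while on the right (after absorbing the factor $AE(-1)$) one obtains $\frac{CDE(-1)}{q^2}$. The claim therefore reduces to showing
\begin{align*}
&B(-1)\sum_{y,z} C(y)\,\overline{C}E(1-y)\,B(z)\,\overline{B}D(1-z)\,\overline{A}(1-yz) \\
&\quad= \sum_{y,z}\overline{C}E(y)\,C(1-y)\,B(z)\,A\overline{D}(1-z)\,\overline{A}(1-yz).
\end{align*}

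Next, in the right-hand sum I would perform the substitution $y\mapsto 1-y$ to align the $y$-characters with $C(y)\overline{C}E(1-y)$; the coupling factor then becomes $\overline{A}(1-z(1-y))$. I would follow this by a M\"obius-type substitution in $z$ depending on $y$, for instance $z\mapsto (1-z)/(1-yz)$, which restricts to a bijection on the set where $1-y,\ z,\ 1-z,\ 1-yz$ are all units in $\mathbb{Z}_q$. Because the characters $B,\overline{B}D,A\overline{D},\overline{A}$ each vanish off $\mathbb{Z}_q^\ast$, the complementary boundary contributes nothing. Tracking the action on each character factor through multiplicativity, namely $\chi(uv^{-1})=\chi(u)\overline{\chi}(v)$ and $\chi(u(1-y))=\chi(u)\chi(1-y)$, should collapse the transformed integrand on the right into the one on the left, with the single extraneous $B(-1)$ matching the prefactor extracted above.

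The main obstacle is pinning down the precise M\"obius substitution in $z$ so that $B(z)$, $A\overline{D}(1-z)$, and $\overline{A}(1-z(1-y))$ are all converted simultaneously into $B(z')$, $\overline{B}D(1-z')$, and $\overline{A}(1-yz')$ up to exactly the scalar $B(-1)$, without leftover $(1-y)$-dependent character factors. If the direct substitution proves too delicate, an alternative is to apply Lemma \ref{lem001} to both sides of the claimed identity and then iterate Lemma \ref{lem002} to reduce it to an equivalent ${_3}F_2$ equality in which only two characters differ; the residual identity may then be handled via the Jacobi-sum relations \eqref{ab}--\eqref{coeff} combined with a single application of Lemma \ref{lemahyp}.
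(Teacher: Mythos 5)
Your reduction is correct: the prefactor bookkeeping gives exactly $B(-1)\,S_{\mathrm{LHS}}=S_{\mathrm{RHS}}$ for the two double sums you display, and after $y\mapsto 1-y$ the problem becomes, for each fixed $y$,
\begin{align*}
\sum_{z} B(z)\, A\overline{D}(1-z)\,\overline{A}\bigl(1-(1-y)z\bigr)\;=\;B(-1)\sum_{z}B(z)\,\overline{B}D(1-z)\,\overline{A}(1-yz),
\end{align*}
which is precisely the ${}_2F_1$ transformation \eqref{artf} that the paper proves and then lifts to the ${}_3F_2$ level via Lemma \ref{lemahyp} and the outer substitution $x\mapsto 1-x$. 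So your route is essentially the paper's, repackaged at the level of the double sum \eqref{rev6}. The genuine gap is the one you flag yourself: the inner M\"obius substitution. Your candidate $z\mapsto(1-z)(1-yz)^{-1}$ does not work here --- that is the substitution behind Lemma \ref{lem007} (it sends $1-z\mapsto z(1-y)(1-yz)^{-1}$ and $1-yz\mapsto(1-y)(1-yz)^{-1}$, so it leaves behind $(1-y)$-dependent character factors and converts $B(z)A\overline{D}(1-z)$ into $\overline{B}D(z)B(1-z)$ rather than into the shape you need).

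The substitution that does the job is the involution $z\mapsto z(z-1)^{-1}$ (the same bijection used in the paper's proofs of \eqref{ab} and of \eqref{artf}). Writing $z=z'(z'-1)^{-1}$ one gets $1-z=-(z'-1)^{-1}$ and $1-(1-y)z=-(1-yz')(z'-1)^{-1}$, so
\begin{align*}
B(z)\,A\overline{D}(1-z)\,\overline{A}\bigl(1-(1-y)z\bigr)
&=B(z')\,\overline{B}(z'-1)\cdot A\overline{D}(-1)\,\overline{A}D(z'-1)\cdot\overline{A}(-1)\,A(z'-1)\,\overline{A}(1-yz')\\
&=B(-1)\,B(z')\,\overline{B}D(1-z')\,\overline{A}(1-yz'),
\end{align*}
since the accumulated $(z'-1)$-characters multiply to $\overline{B}D$ and the sign characters to $D(-1)$, giving the single scalar $D(-1)\,\overline{B}D(-1)=B(-1)$. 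The character factors vanish off units, so the bijection between the relevant unit sets loses nothing, exactly as you anticipate. With this substitution supplied, your argument closes and coincides with the paper's; the fallback via Lemmas \ref{lem001} and \ref{lem002} is not needed (and, as stated, is too vague to count as a proof).
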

	\begin{proof}
		We first show that for $x\in\mathbb{Z}_{p^\alpha}^\ast$, if $p\nmid 1-x$, then
		\begin{align}\label{artf}	
			{_{2}}F_{1}\left( \begin{array}{cc} A, &B \\ &D \end{array} |x \right) &= A(-1) {_{2}}F_{1}\left( \begin{array}{cc} A, &B\\ &AB\overline{D} \end{array} | 1-x \right).
		\end{align}
		To prove \eqref{artf}, let $x\in\mathbb{Z}_{p^\alpha}^\ast$ be such that $p\nmid 1-x$. By Definition \ref{def01}, we have
		\begin{align}\label{artz}
			{_{2}}F_{1}\left( \begin{array}{cc} A, &B\\ &D \end{array} |x \right) = \frac{1\times BD(-1)}{p^\alpha} \sum_{p \nmid y, 1-y,1-xy} B(y) \overline{B}D(1-y)\overline{A}(1-xy).	
		\end{align}
		We find that
		\begin{align*} \{y\in\mathbb{Z}_{p^\alpha}:p \nmid y, 1-y,1-xy\} &\rightarrow \{z\in\mathbb{Z}_{p^\alpha}:p \nmid z,1-z,1-(1-x)z\}\\
			y &\mapsto y(y-1)^{-1}
		\end{align*}
		is a bijection. Hence, \eqref{artz} yields
		\begin{align}\label{artv}
			{_{2}}F_{1}\left( \begin{array}{cc} A, &B\\ &D \end{array} |x \right)&=\frac{BD(-1)}{p^\alpha}\sum_{\substack{p \nmid z,1-z,\\1-(1-x)z}}[B(z(z-1)^{-1})\overline{B}D(-(z-1)^{-1})\notag\\
			&\hspace{3cm}\times\overline{A} ((z-1-xz)(z-1)^{-1})] \notag\\
			&=\frac{D(-1)}{p^\alpha} \sum_{\substack{p \nmid z,1-z,\\1-(1-x)z}}B(z) A\overline{D}(1-z)\overline{A}(1-(1-x)z).
		\end{align}
		Thus, by Definition \ref{def01} and \eqref{artv}, and noting that $\varepsilon(1-x)=1$, we conclude  \eqref{artf}.
		Now, Lemma \ref{lemahyp} and \eqref{artf} give
		\begin{align*}
			{_{3}}F_{2} \left( \begin{array}{ccc} A,&B,&C\\&D,&E \end{array} | 1 \right)&=\frac{CE(-1)}{p^\alpha}\sum_{p\nmid x,1-x} {_{2}}F_{1}\left( \begin{array}{cc} A, &B \\ &D \end{array} |x \right) C(x) \overline{C}E(1-x)\\
			& = \frac{ACE(-1)}{p^\alpha}\sum_{p\nmid x,1-x} {_{2}}F_{1}\left( \begin{array}{cc} A, &B \\ &AB\overline{D}\end{array} |1-x \right)C(x)\overline{C}E(1-x)\\
			& = \frac{ACE(-1)}{p^\alpha}\sum_{p\nmid x,1-x} {_{2}}F_{1}\left( \begin{array}{cc} A, &B \\ &AB\overline{D}\end{array} |x \right)C(1-x)\overline{C}E(x)\\
			&=AE(-1){_{3}}F_{2} \left( \begin{array}{ccc} A,&B,&\overline{C}E\\&AB\overline{D},&E \end{array} | 1 \right),
		\end{align*}
		where we have used the substitution $x\mapsto 1-x$ in the penultimate line. This completes the proof of the lemma.
	\end{proof}
	The following lemma gives a $\mathbb{Z}_{p^\alpha}$-analogue of (4.24) in \cite{greene}.
	\begin{lemma}\label{lem005}
		Let $p$ be an odd prime and let $\alpha\geq 1$ be an integer. Let $A,B,C,D,E$ be Dirichlet characters mod $p^\alpha$. Then,
		$${_{3}}F_{2}\left(\begin{array}{ccc}A, & B, & C\\ & D, & E\end{array}| 1\right)=AD(-1){_{3}}F_{2}\left(\begin{array}{ccc}A, & \overline{B}D, & C\\ & D, & AC\overline{E}\end{array}| 1\right).$$	
	\end{lemma}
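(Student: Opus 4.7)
The plan is to reduce Lemma \ref{lem005} to the previously proved Lemma \ref{lem004} by exploiting an obvious symmetry in the definition of $_3F_2$. Looking at Definition \ref{nfn}, the sum defining
$${_{3}}F_{2} \left( \begin{array}{ccc} A,&B,&C\\&D,&E \end{array} | 1 \right) = \frac{p^\alpha}{\phi(p^\alpha)} \sum_{\chi\in\widehat{\mathbb{Z}_{p^\alpha}^\ast}} \binom{A\chi}{\chi} \binom{B\chi}{D\chi} \binom{C\chi}{E\chi}\chi(1)$$
is unchanged if we swap the two factors $\binom{B\chi}{D\chi}$ and $\binom{C\chi}{E\chi}$, since multiplication is commutative. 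In other words, the transposition of the pairs $(B,D)$ and $(C,E)$ preserves the value of $_3F_2$.

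With this symmetry in hand, I would proceed in three steps. First, use the symmetry to rewrite
$${_{3}}F_{2}\left(\begin{array}{ccc}A, & B, & C\\ & D, & E\end{array}\Big| 1\right) = {_{3}}F_{2}\left(\begin{array}{ccc}A, & C, & B\\ & E, & D\end{array}\Big| 1\right).$$
Second, apply Lemma \ref{lem004} to the right-hand side with the roles of $B\leftrightarrow C$ and $D\leftrightarrow E$ interchanged; this produces the factor $AD(-1)$ (rather than $AE(-1)$) and yields
$${_{3}}F_{2}\left(\begin{array}{ccc}A, & C, & B\\ & E, & D\end{array}\Big| 1\right)=AD(-1){_{3}}F_{2}\left(\begin{array}{ccc}A, & C, & \overline{B}D\\ & AC\overline{E}, & D\end{array}\Big| 1\right).$$
Third, apply the symmetry once more to the resulting $_3F_2$, swapping the pairs $(C,AC\overline{E})$ and $(\overline{B}D,D)$, which gives exactly
$$AD(-1){_{3}}F_{2}\left(\begin{array}{ccc}A, & \overline{B}D, & C\\ & D, & AC\overline{E}\end{array}\Big| 1\right),$$
as required.

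There is essentially no obstacle: the entire argument is a bookkeeping exercise combining Lemma \ref{lem004} with the obvious commutativity of the binomial factors inside the defining sum. I would just need to be careful to track the character arguments correctly in the two symmetry steps, making sure that applying Lemma \ref{lem004} with the swapped inputs produces $AD(-1)$ and the indices $AC\overline{E}$, $\overline{B}D$ as above. No further transformations of hypergeometric functions, change-of-variable bijections, or character sum evaluations are needed.
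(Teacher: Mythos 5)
Your proposal is correct, but it follows a different route from the paper. You derive the identity formally from Lemma \ref{lem004}: the permutation symmetry you invoke is indeed immediate from Definition \ref{nfn}, since the value of ${_{3}}F_{2}$ there is a single sum over $\chi$ of the product $\binom{A\chi}{\chi}\binom{B\chi}{D\chi}\binom{C\chi}{E\chi}$, so interchanging the pairs $(B,D)$ and $(C,E)$ changes nothing; applying Lemma \ref{lem004} to the tuple $(A,C,B;E,D)$ then yields the factor $AD(-1)$ and the parameters $A$, $C$, $\overline{B}D$ over $AC\overline{E}$, $D$, and a second swap gives exactly the claimed right-hand side. I checked the bookkeeping and it is accurate. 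The paper instead proves the lemma directly from the double-sum representation \eqref{corr2}: it sets $x=1$ and performs the substitutions $y\mapsto 1-y$, $z\mapsto 1-z$, with the factor $AD(-1)$ emerging from $\overline{A}(z-y)=\overline{A}(-1)\overline{A}(y-z)$ and the change of the prefactor $BD(-1)$ to $B'D'(-1)=B(-1)$. The trade-off: your argument is purely formal, needs no new change-of-variable bijection or character-sum manipulation, and makes transparent that the (4.24)-type transformation is a consequence of the (4.23)-type one combined with the obvious symmetry of the defining sum; the paper's argument is self-contained at the level of the explicit double sum and does not route through Lemma \ref{lem004}, so it would survive even if one only had the integral-type representation \eqref{corr2} at hand. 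Either proof is acceptable here.
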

	\begin{proof}
		Putting $x=1$ in \eqref{corr2} and using the substitutions $y'=1-y$ and $z'=1-z$ in the double summation therein yield the required result.
	\end{proof}
	The following lemma gives a $\mathbb{Z}_{p^\alpha}$-analogue of (4.25) in \cite{greene}.
	\begin{lemma}\label{lem006}
		Let $p$ be an odd prime and $\alpha\geq 1$ be an integer. Let $A,B,C,D,E$ be Dirichlet characters mod $p^\alpha$. Then,
		$${_{3}}F_{2}\left(\begin{array}{ccc}A, & B, & C\\ & D, & E\end{array}| 1\right)=B(-1){_{3}}F_{2}\left(\begin{array}{ccc}\overline{A}D, & B, & C\\ & D, & BC\overline{E}\end{array}| 1\right).$$	
	\end{lemma}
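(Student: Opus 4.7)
The plan is to derive this identity by sandwiching Lemma \ref{lem005} between two applications of Lemma \ref{lem001}, avoiding any new integral manipulation. The motivation is that in Definition \ref{nfn} the first numerator character $A$ plays a distinguished role (via the binomial coefficient $\binom{A\chi}{\chi}$), whereas Lemma \ref{lem005} naturally acts on the second numerator character, sending $B \mapsto \overline{B}D$. Since Lemma \ref{lem006} instead transforms $A \mapsto \overline{A}D$, the natural strategy is first to use Lemma \ref{lem001} to move $A$ out of the distinguished slot, then apply Lemma \ref{lem005} there, and finally apply Lemma \ref{lem001} once more to return to standard form.

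Explicitly, I would first invoke Lemma \ref{lem001} to rewrite the left-hand side as
$${_{3}}F_{2}\left( \begin{array}{ccc} B\overline{D}, & A\overline{D}, & C\overline{D}\\ & \overline{D}, & E\overline{D} \end{array} | 1 \right),$$
and then apply Lemma \ref{lem005} with parameter tuple $(A',B',C',D',E')=(B\overline{D},A\overline{D},C\overline{D},\overline{D},E\overline{D})$. The prefactor collapses to $A'D'(-1)=B(-1)$ (using $D^2(-1)=D(1)=1$), and the new inner parameters simplify to $\overline{B'}D'=\overline{A}$ and $A'C'\overline{E'}=BC\overline{D}\overline{E}$, producing $B(-1) \cdot {_{3}}F_{2}\left( B\overline{D}, \overline{A}, C\overline{D}; \overline{D}, BC\overline{D}\overline{E} | 1 \right)$.

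A second application of Lemma \ref{lem001} to this ${_{3}}F_2$ cancels the $\overline{D}$-twists on the numerator characters and restores $D$ in the lower row, yielding exactly the tuple $(\overline{A}D,B,C;D,BC\overline{E})$ required by the statement. The only obstacle is the careful bookkeeping of compositions of character twists by $D$ and $\overline{D}$ and tracking the single sign factor $B(-1)$; there is no analytic difficulty, so the whole argument is purely formal once the three transformations are chained in the correct order. An alternative route would be to mimic the proof of Lemma \ref{lem005} by applying \eqref{corr2} with a suitable change of variables in the double sum, but the chained-lemma argument is considerably cleaner and avoids the ancillary degenerate contributions from Lemma \ref{le1} that such a direct approach would require.
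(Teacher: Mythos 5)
Your chain of transformations is correct: applying Lemma \ref{lem001}, then Lemma \ref{lem005} with parameters $(B\overline{D},A\overline{D},C\overline{D},\overline{D},E\overline{D})$, then Lemma \ref{lem001} again does yield the sign $A'D'(-1)=B\overline{D}^{\,2}(-1)=B(-1)$ and the parameter tuple $(\overline{A}D,B,C;D,BC\overline{E})$, and there is no circularity since Lemmas \ref{lem001} and \ref{lem005} are proved independently of Lemma \ref{lem006}. This is, however, a genuinely different route from the paper's. The paper proves the lemma directly, in parallel with Greene's derivation of (4.25): it first establishes the ${_{2}}F_{1}$-level transformation
\begin{align*}
{_{2}}F_{1}\left(\begin{array}{cc}A, & B\\ & D\end{array}\Big|x\right)=\overline{B}(1-x)\,{_{2}}F_{1}\left(\begin{array}{cc}\overline{A}D, & B\\ & D\end{array}\Big|x(x-1)^{-1}\right)
\end{align*}
for $p\nmid 1-x$ via the substitution $y\mapsto y(1-x)(1-xy)^{-1}$ in Definition \ref{def01} (taking care that $\varepsilon(x)=\varepsilon(x(x-1)^{-1})$), and then lifts it to the ${_{3}}F_{2}$ identity using the recursion of Lemma \ref{lemahyp} together with the bijection $y\mapsto y(y-1)^{-1}$. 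Your argument buys brevity and pure formality: it is bookkeeping at the level of the character-sum definition, and in the language of Lemma \ref{dlemma1} it amounts to the group relation $f_6=f_1\circ f_5\circ f_1$ with the sign tracked by hand. The paper's argument costs an explicit change of variables but produces the intermediate ${_{2}}F_{1}$ transformation \eqref{arty} valid for all admissible $x$, which has independent interest beyond the evaluation at $1$ (your composition only gives the $x=1$ statement, which is all the lemma asserts). One small inaccuracy in your closing remark: the paper's direct proof does not actually need the degenerate $\delta$-terms from Lemma \ref{le1}; those arise in the binomial-coefficient expansion \eqref{bin}, not in the substitution arguments used here.
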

	\begin{proof}
		At first, we show that if $x\in\mathbb{Z}_{p^\alpha}$ such that $p\nmid 1-x$, then
		\begin{align}\label{arty}
			{_{2}}F_{1}\left(\begin{array}{cc}A, & B\\ & D\end{array}|x\right)=
			\overline{B}(1-x) {_{2}}F_{1}\left(\begin{array}{cc}\overline{A}D, & B\\ & D\end{array}|x(x-1)^{-1}\right).
		\end{align}
		To prove this, let $x\in\mathbb{Z}_{p^\alpha}$ be such that $p\nmid 1-x$. We begin by employing Definition \ref{def01} to obtain
		\begin{align}\label{6.9}
			{_{2}}F_{1}\left(\begin{array}{cc}A, & B\\ & D\end{array}|x\right)=\frac{\varepsilon(x)BD(-1)}{p^\alpha}\sum_{p \nmid y, 1-y,1-xy} B(y)\overline{B}D(1-y)\overline{A}(1-xy).
		\end{align}
		The following map
		\begin{align*}
			\{y\in\mathbb{Z}_{p^\alpha}:p\nmid y,1-y,1-xy\}&\rightarrow\{z\in\mathbb{Z}_{p^\alpha}:p\nmid z,1-z,1-x+xz\}\\
			y&\mapsto y(1-x)(1-xy)^{-1}
		\end{align*}
		is a bijection. Hence, using the substitution $y\mapsto y(1-x)(1-xy)^{-1}$ in the sum in \eqref{6.9} yields
		\begin{align}\label{eq02}
			&{_{2}}F_{1}\left(\begin{array}{cc}A, & B\\ & D\end{array}|x\right)\notag\\
			&=\frac{\varepsilon(x)BD(-1)}{p^\alpha}\sum_{\substack{p\nmid z,1-z,\\1-x+xz}}[B(z(1-x+xz)^{-1})\overline{B}D((1-x)(1-z)(1-x+xz)^{-1})\notag\\
			&\hspace{3cm}\times \overline{A}((1-x)(1-x+xz)^{-1})]\notag  \\
			&=\frac{\overline{AB}D(1-x)\varepsilon(x)BD(-1)}{p^\alpha}\sum_{\substack{p\nmid z,1-z,\\1-x+xz}}B(z)\overline{B}D(1-z)A\overline{D}(1-x+xz).\notag\\
			&=\frac{\overline{B}(1-x)\varepsilon(x)BD(-1)}{p^\alpha}\sum_{\substack{p\nmid z,1-z,\\1-x+xz}}B(z)\overline{B}D(1-z)A\overline{D}(1-xz(x-1)^{-1}).
		\end{align}
		Note that we have assumed $p\nmid x-1$, so $p\mid x$  if and only if $p\mid x(x-1)^{-1}$. Therefore, we have $\varepsilon(x)=\varepsilon\left(x(x-1)^{-1}\right)$. Thus, replacing $\varepsilon(x)$ by $\varepsilon(x(x-1)^{-1})$ in \eqref{eq02} and then 
		using Definition \ref{def01} in the same, we conclude \eqref{arty}. \\ 
		Now, using Lemma \ref{lemahyp} and \eqref{arty} we find that
		\begin{align}\label{artu}
			{_{3}}F_{2}\left(\begin{array}{ccc}A, & B, & C\\ & D, & E\end{array}| 1\right)&=\frac{CE(-1)}{p^\alpha}\sum_{p\nmid y, 1-y} {_{2}}F_{1}\left(\begin{array}{cc}A, & B\\ & D\end{array}|y\right)C(y)\overline{C}E(1-y)\notag \\
			&=\frac{CE(-1)}{p^\alpha}\sum_{p\nmid y,1-y}\Bigg[  {_{2}}F_{1}\left(\begin{array}{cc}\overline{A}D, & B\\ & D\end{array}|y(y-1)^{-1}\right)\notag\\
			&\hspace{3cm}\times C(y)\overline{B}\overline{C}E(1-y)\Bigg].
		\end{align}
		It is easy to see that
		\begin{align*}
			\{y\in\mathbb{Z}_{p^\alpha}:p\nmid y,1-y\}&\rightarrow\{z\in\mathbb{Z}_{p^\alpha}:p\nmid z,1-z\}\\
			y&\mapsto y(y-1)^{-1}
		\end{align*}
		is a bijection, and hence, \eqref{artu} together with Lemma \ref{lemahyp} yields
		\begin{align*}
			{_{3}}F_{2}\left(\begin{array}{ccc}A, & B, & C\\ & D, & E\end{array}| 1\right)&=\frac{CE(-1)}{p^\alpha}\sum_{p\nmid z,1-z}\Bigg[{_{2}}F_{1}\left(\begin{array}{cc}\overline{A}D, & B\\ & D\end{array}|z\right)\\
			&\hspace{3cm}\times C\left( z(z-1)^{-1}\right) \overline{BC}E\left(-(z-1)^{-1}\right)\Bigg]\\
			&=\frac{E(-1)}{p^\alpha}\sum_{p\nmid z,1-z}C(z)B\overline{E}(1-z)	{_{2}}F_{1}\left(\begin{array}{cc}\overline{A}D, & B\\ & D\end{array}| z\right)\\
			&=B(-1)	{_{3}}F_{2}\left(\begin{array}{ccc}\overline{A}D, & B, & C\\ & D, & BC\overline{E}\end{array}| 1\right),
		\end{align*}
		concluding the proof of the lemma.
	\end{proof}
	The following lemma is the $\mathbb{Z}_{p^\alpha}$-analogue of (4.26) in \cite{greene}.
	\begin{lemma}\label{lem007}
		Let $p$ be an odd prime and let $\alpha\geq 1$ be an integer. Let $A,B,C,D,E$ be Dirichlet characters mod $p^\alpha$. Then,
		$${_{3}}F_{2}\left(\begin{array}{ccc}A, & B, & C\\ & D, & E\end{array}| 1\right)=AB(-1){_{3}}F_{2}\left(\begin{array}{ccc}\overline{A}D, & \overline{B}D, & C\\ & D, & \overline{AB}DE\end{array}| 1\right).$$	
	\end{lemma}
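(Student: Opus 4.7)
The plan is to derive this transformation by composing the two preceding ones, Lemma \ref{lem005} and Lemma \ref{lem006}, which each perform a ``single'' symmetry of the parameters. Since the target swap simultaneously changes $A\mapsto \overline{A}D$ and $B\mapsto \overline{B}D$ (and correspondingly adjusts $E$), while Lemma \ref{lem005} only moves $B$ and Lemma \ref{lem006} only moves $A$, composing them in the right order should land on the target.

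First I would apply Lemma \ref{lem005} to the left-hand side, obtaining
$${_{3}}F_{2}\left(\begin{array}{ccc}A, & B, & C\\ & D, & E\end{array}\bigg|\, 1\right)= AD(-1)\,{_{3}}F_{2}\left(\begin{array}{ccc}A, & \overline{B}D, & C\\ & D, & AC\overline{E}\end{array}\bigg|\, 1\right).$$
Next, I would apply Lemma \ref{lem006} to the $_3F_2$ on the right with parameters $(A,\overline{B}D,C,D,AC\overline{E})$. The lemma transforms the first upper argument $A\mapsto \overline{A}D$ and the second lower argument $AC\overline{E}\mapsto (\overline{B}D)\cdot C\cdot\overline{AC\overline{E}}=\overline{AB}DE$, introducing a factor $(\overline{B}D)(-1)$. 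This yields precisely the $_3F_2$ on the right-hand side of the desired identity.

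Finally, I would verify the constant by combining the two prefactors: $AD(-1)\cdot(\overline{B}D)(-1)=A(-1)D(-1)B(-1)D(-1)=AB(-1)\,D(-1)^2$. Since $D(-1)^2=D(1)=1$, this collapses to $AB(-1)$, as required. Here I use only the elementary fact that for any Dirichlet character $\psi$ mod $p^\alpha$ one has $\psi(-1)\in\{\pm 1\}$ and hence $\overline{\psi}(-1)=\psi(-1)$.

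I do not expect any real obstacle here; the lemma is essentially a bookkeeping consequence of the two preceding ones, and the only thing to watch is the careful tracking of the sign factors $\psi(-1)$ after substitution. The main step to execute with care is the second one, where the characters fed into Lemma \ref{lem006} are themselves products of the original ones, so one must check that $B'C'\overline{E'}$ in the lemma simplifies to $\overline{AB}DE$ and that $\overline{A'}D'=\overline{A}D$, after which the identification with the claimed right-hand side is immediate.
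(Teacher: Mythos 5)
Your proof is correct, but it is not the route the paper takes. You obtain the identity purely formally by composing Lemma \ref{lem005} (which replaces $B\mapsto\overline{B}D$, $E\mapsto AC\overline{E}$ at the cost of $AD(-1)$) with Lemma \ref{lem006} applied to the resulting parameters $(A,\overline{B}D,C,D,AC\overline{E})$; your bookkeeping is right: $\overline{A'}D'=\overline{A}D$, $B'C'\overline{E'}=\overline{B}D\cdot C\cdot\overline{A}\,\overline{C}\,E=\overline{AB}DE$, and the prefactors combine as $AD(-1)\cdot\overline{B}D(-1)=AB(-1)$ since $D(-1)^2=1$ and $\overline{B}(-1)=B(-1)\in\{\pm1\}$. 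Since Lemmas \ref{lem005} and \ref{lem006} are stated for arbitrary Dirichlet characters mod $p^\alpha$ with no side conditions, nothing obstructs the substitution. The paper instead proves the lemma from scratch at the ${_2}F_1$ level: it establishes the transformation ${_{2}}F_{1}(A,B;D\,|\,x)=D(-1)\overline{AB}D(1-x)\,{_{2}}F_{1}(\overline{A}D,\overline{B}D;D\,|\,x)$ via the bijection $y\mapsto(1-y)(1-xy)^{-1}$, and then lifts it to ${_3}F_2$ at $x=1$ through the recursive formula of Lemma \ref{lemahyp}. Your argument is shorter and makes visible the group-theoretic fact (consistent with Lemma \ref{dlemma1}, where on parameters $f_7=f_6\circ f_5$) that this transformation is the composition of the previous two; the paper's argument is self-contained, does not depend on the correctness of Lemmas \ref{lem005}--\ref{lem006}, and yields the ${_2}F_1$ identity \eqref{arto} (an Euler-type transformation valid for general $x$) as a byproduct, which the composition at $x=1$ does not give.
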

	\begin{proof}
		Firstly, we show that if $x\in\mathbb{Z}_{p^\alpha}$ is such that $p\nmid 1-x$, then
		\begin{align}\label{arto}
			{_{2}}F_{1}\left(\begin{array}{cc}A, & B\\ & D\end{array}| x\right)&=D(-1)\overline{AB}D(1-x){_{2}}F_{1}\left(\begin{array}{cc}\overline{A}D, & \overline{B}D\\ & D\end{array}| x\right). 
		\end{align}
		To prove this, we assume that $x\in\mathbb{Z}_{p^\alpha}$ satisfying $p\nmid 1-x$. It is easy to see that the following map is a bijection.
		\begin{align*}
			\{y\in\mathbb{Z}_{p^\alpha}:p\nmid y,1-y,1-xy\}&\rightarrow\{z\in\mathbb{Z}_{p^\alpha}:p\nmid z,1-z,1-xz\}\\
			y&\mapsto (1-y)(1-xy)^{-1}
		\end{align*}
		We substitute $z=(1-y)(1-xy)^{-1}$ in the sum in Definition \ref{def01} to obtain
		\begin{align}\label{eq05}
			{_{2}}F_{1}\left(\begin{array}{cc}A, & B\\ & D\end{array}| x\right)&=\frac{\varepsilon(x)BD(-1)}{p^\alpha}\sum_{\substack{p\nmid z,1-z,\\1-xz}}[B((1-z)(1-xz)^{-1})\notag\\
			&\hspace{2.5cm}\times \overline{B}D(z(1-x)(1-xz)^{-1}) \overline{A}((1-x)(1-xz)^{-1})] \notag\\
			&=\frac{\varepsilon(x)BD(-1)}{p^\alpha}\overline{AB}D(1-x)\sum_{\substack{p\nmid z,1-z,\\1-xz}}\overline{B}D(z)B(1-z)A\overline{D}(1-xz).
		\end{align}
		As a result, Definition \ref{def01} and \eqref{eq05} yield \eqref{arto}. Now, using Lemma \ref{lemahyp} and \eqref{arto} we find that
		\begin{align*}
			&{_{3}}F_{2}\left(\begin{array}{ccc}A, & B, & C\\ & D, & E\end{array}| 1\right)\\
			&=\frac{CE(-1)}{p^\alpha}\sum_{p\nmid y,1-y}	{_{2}}F_{1}\left(\begin{array}{cc}A, & B\\ & D\end{array}| y\right)C(y)\overline{C}E(1-y)\\
			&=\frac{CDE(-1)}{p^\alpha}\sum_{p\nmid y,1-y}	{_{2}}F_{1}\left(\begin{array}{cc}\overline{A}D, & \overline{B}D\\ & D\end{array}| y\right)C(y)\overline{ABC}DE(1-y)\\
			&=AB(-1){_{3}}F_{2}\left(\begin{array}{ccc}\overline{A}D, & \overline{B}D, & C\\ & D, & \overline{AB}DE\end{array}| 1\right).
		\end{align*}
		This completes the proof of the lemma.
	\end{proof}
	In \cite{dawsey}, corresponding to each of the transformations from (3.15) to (3.21) listed therein, Dawsey and McCarthy associated a map. The purpose was to have a group action, which ultimately concluded that certain hypergeometric functions (over finite fields) would yield the same value. For a detailed account, one can refer to \cite{dawsey} or Lemma 2.8 in \cite{bhowmikpeisert}. Here, we do the same but for hypergeometric functions with Dirichlet characters as arguments. The following lemma looks essentially the same as Lemma 2.8 in \cite{bhowmikpeisert}, except that the hypergeometric functions here involve the Dirichlet characters as defined in this article.
	\begin{lemma}\label{dlemma1}
		Let $X=\{(t_1,t_2,t_3,t_4,t_5)\in\mathbb{Z}_4^5: t_1,t_2,t_3\neq 0,t_4,t_5;~t_1+t_2+t_3\neq t_4,t_5\}$. Define the functions $f_i:X\rightarrow X,~i\in\{1,2,\ldots,7\}$ in the following manner:
		\begin{align*}
			f_1(t_1,t_2,t_3,t_4,t_5)&=(t_2-t_4,t_1-t_4,t_3-t_4,-t_4,t_5-t_4),\\	
			f_{2}\left(t_{1}, t_{2}, t_{3}, t_{4}, t_{5}\right)&=\left(t_{1}, t_{1}-t_{4}, t_{1}-t_{5}, t_{1}-t_{2}, t_{1}-t_{3}\right),\\
			f_{3}\left(t_{1}, t_{2}, t_{3}, t_{4}, t_{5}\right)&=\left(t_{2}-t_{4}, t_{2}, t_{2}-t_{5}, t_{2}-t_1,t_2-t_3\right),\\
			f_{4}\left(t_{1}, t_{2}, t_{3}, t_{4}, t_{5}\right)&=\left(t_{1}, t_{2}, t_{5}-t_{3}, t_{1}+t_{2}-t_{4}, t_{5}\right),\\
			f_{5}\left(t_1, t_{2}, t_{3}, t_{4}, t_{5}\right)&=\left(t_{1}, t_{4}-t_{2}, t_{3}, t_{4}, t_{1}+t_{3}-t_{5}\right),\\
			f_{6}\left(t_{1}, t_{2}, t_{3}, t_{4}, t_{5}\right)&=\left(t_{4}-t_{1}, t_{2}, t_{3}, t_{4}, t_{2}+t_{3}-t_{5}\right),\\
			f_{7}\left(t_{1},t_{2}, t_{3},t_{4}, t_{5}\right)&=\left(t_{4}-t_{1},t_{4}-t_{2},t_{3}, t_{4}, t_{4}+t_{5}-t_{1}-t_{2}\right).
		\end{align*}
		Then the group generated by $f_1,\ldots,f_7$, with operation composition of functions, is the set
		$$\mathcal{F}=\{f_0,f_i,f_j \circ f_l,f_4\circ f_1,f_6\circ f_2,f_5\circ f_3,f_1\circ f_4\circ f_1: 1\leq i\leq 7,~1\leq j\leq 3,~4\leq l\leq 7\},$$
		where $f_0$ is the identity map.
		Moreover, the group $\mathcal{F}$ acts on the set $X$.\\ Now, let $p\equiv 1\pmod 8$ be a prime and $\alpha$ be a positive integer. Let $\chi_4$ be a Dirichlet character mod $p^\alpha$ of order $4$. If we associate the $5$-tuple $(t_1,t_2,\ldots,t_5)\in X$ to the hypergeometric function ${_{3}}F_{2}\left(\begin{array}{ccc}\chi_4^{t_1}, & \chi_4^{t_2}, & \chi_4^{t_3}\\ & \chi_4^{t_4}, & \chi_4^{t_5}\end{array}| 1\right)$, 
		then each orbit of the group action consists of a number of $5$-tuples $(t_1,t_2,\ldots,t_5)$, and the corresponding ${}_3 F_{2}$ terms have the same value.
	\end{lemma}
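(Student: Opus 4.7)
The plan is to recognise each generator $f_i$ as the exponent-level incarnation of one of the transformation lemmas proved just above. Writing $A = \chi_4^{t_1}$, $B = \chi_4^{t_2}$, $C = \chi_4^{t_3}$, $D = \chi_4^{t_4}$, $E = \chi_4^{t_5}$ and reading off the character exponents on the right-hand side of Lemmas \ref{lem001}, \ref{lem002}, \ref{lem003}, \ref{lem004}, \ref{lem005}, \ref{lem006}, \ref{lem007} produces exactly $f_1, f_2, f_3, f_4, f_5, f_6, f_7$ respectively, as a direct pattern match. Crucially, since $p \equiv 1 \pmod 8$ we have $-1 \in \langle g^4 \rangle$ and hence $\chi_4(-1) = 1$; consequently every scalar prefactor $AE(-1)$, $ABCDE(-1)$, etc.\ appearing in Lemmas \ref{lem002}--\ref{lem007} equals $1$. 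So the ${}_3F_2$ value attached to a tuple is preserved exactly by each $f_i$, and by extension under every element of the group they generate.

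Next I would verify stability $f_i(X) \subseteq X$ by a direct case analysis. Translating to character language, the constraints defining $X$ read $A, B, C \notin \{\varepsilon, D, E\}$ and $ABC \notin \{D, E\}$; under each of the seven substitutions these inequalities either rearrange into an instance of the original set of conditions or follow from them by the symmetries of the underlying character substitution. For example, $f_1$ comes from $\chi \mapsto \overline{D}\chi$, which simply shifts every exponent by $-t_4$ and permutes the forbidden relations among parameters. The check is mechanical and proceeds coordinate by coordinate.

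The main bookkeeping task, and the step I expect to be the principal obstacle, is verifying that the subgroup of $\operatorname{Sym}(X)$ generated by $f_1, \ldots, f_7$ coincides with the explicit 32-element list $\mathcal{F}$. This amounts to enumerating compositions $f_i \circ f_j$ coordinate-wise, showing that each product lies in the listed set, and then checking closure of that set under left-multiplication by every generator. Rather than grinding through all the compositions from scratch, I would appeal to Lemma 2.8 of \cite{bhowmikpeisert}: the seven generators there are given by the same integer-vector formulas on $\mathbb{Z}_4^5$ and satisfy the same algebraic relations (our Lemmas \ref{lem001}--\ref{lem007} being the exact $\mathbb{Z}_{p^\alpha}$-analogues of the finite-field identities used there, in the same spirit as the earlier Dawsey--McCarthy framework \cite{dawsey}), so the group-theoretic enumeration transfers verbatim. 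Once closure is established, the orbit-equality statement for the associated hypergeometric values follows immediately from the first paragraph, since invariance under the generators lifts to invariance under the whole group.
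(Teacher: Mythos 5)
Your proposal takes essentially the same route as the paper: the paper's own proof consists precisely of matching each $f_i$ to the corresponding transformation in Lemmas \ref{lem001}--\ref{lem007} (writing out only the $f_1$/Lemma \ref{lem001} case explicitly) and inheriting the group-theoretic bookkeeping from Lemma 2.8 of \cite{bhowmikpeisert}, while your observation that $\chi_4(-1)=1$ makes all the sign prefactors trivial is the same fact the paper records just before Lemma \ref{lemsec1}. The only slip is cosmetic: the listed set $\mathcal{F}$ has $24$ elements, not $32$.
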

	\begin{proof}
		The proof follows using Lemmas \ref{lem001} to \ref{lem007}. For example, the transformation in Lemma \ref{lem001} gives that
		$${_{3}}F_{2}\left(\begin{array}{ccc}\chi_4^{t_1}, & \chi_4^{t_2}, & \chi_4^{t_3}\\ & \chi_4^{t_4}, & \chi_4^{t_5}\end{array}| 1\right)={_{3}}F_{2}\left(\begin{array}{ccc}\chi_4^{t_2-t_4}, &\chi_4^{t_1-t_4} , & \chi_4^{t_3-t_4}\\ & \chi_4^{-t_4}, &\chi_4^{t_5-t_4}\end{array}| 1\right),$$
		and hence, it induces a map $f_1: X\rightarrow X$
		given by
		$$f_1(t_1,t_2,t_3,t_4,t_5)=(t_2-t_4,t_1-t_4,t_3-t_4,-t_4,t_5-t_4).$$
	\end{proof}
	\section{Proof of Theorem \ref{thm2}}
	We are now ready to prove Theorem \ref{thm2}. Recall that $\mathbb{Z}_{p^\alpha}^\ast=\langle g\rangle$ and $h=1-\chi_4(g)$. Since $-1\in\langle g^4\rangle$, we have $\chi_4(-1)=1$. Let $H=\langle g^4\rangle\cup g\langle g^4\rangle$ and $H_{ind}$ be the subgraph of $G^\ast(p^{\alpha})$ induced by $H$. Let us denote by $Re(z)$ the real part of the complex number $z$. As before, $x^{-1}$ denotes the multiplicative inverse of $x\in\mathbb{Z}_n^\ast$.
	\begin{proof}[Proof of Theorem \ref{thm2}]
		Since 
		$G^\ast(p^\alpha)$ is vertex-transitive, so we find that
		\begin{align}\label{tt}
			k_4(G^\ast(p^\alpha))
			&=\frac{p^\alpha}{4}\times \text{ number of cliques of order 4 in $G^\ast(p^\alpha)$ containing the vertex }0\notag \\
			&=\frac{p^\alpha}{4}\times k_3(H_{ind}).
		\end{align}
		So, our task is to find $k_3(H_{ind})$. We proceed as in the proof of Theorem 1.2 in \cite{bhowmikpeisert}.
		Let us denote by $k_3(H_{ind}, x)$ the number of triangles in $H_{ind}$ containing the vertex $x$. Let $a, b\in H$ be such that $\chi_4(ab^{-1})=1$. Then the map on the vertex set of $H_{ind}$ defined as
		\begin{align*}
			V(H_{ind})&\rightarrow V(H_{ind})\\
			x&\mapsto ba^{-1}x
		\end{align*}
		is a graph automorphism sending $a$ to $b$.
		Therefore, if $a, b\in H$ are such that $\chi_4(ab^{-1})=1$, then 
		\begin{align}\label{cond}
			k_3(H_{ind},a)=k_3(H_{ind},b).	
		\end{align} 
		Let $\langle g^4\rangle =\{x_1,\ldots,x_{p^{\alpha-1}\left(\frac{p-1}{4}\right)} \}$ with $x_1=1$ and $g\langle g^4\rangle=\{y_1,\ldots, y_{p^{\alpha-1}\left(\frac{p-1}{4}\right)}\}$ with $y_1=g$. Then,
		\begin{align}\label{pick}
			\sum_{i=1}^{p^{\alpha-1}\left(\frac{p-1}{4}\right)}k_3(H_{ind},x_i)+\sum_{i=1}^{p^{\alpha-1}\left(\frac{p-1}{4}\right)}k_3(H_{ind},y_i)=3\times k_3(H_{ind}).
		\end{align}
		By $\eqref{cond}$, we have  
		$$k_3(H_{ind},x_1)=k_3(H_{ind},x_2)=\cdots=k_3(H_{ind},x_{p^{\alpha-1}\left(\frac{p-1}{4}\right)})$$
		and 
		$$k_3(H_{ind},y_1)=k_3(H_{ind},y_2)=\cdots=k_3(H_{ind},y_{p^{\alpha-1}\left(\frac{p-1}{4}\right)}).$$
		Hence, \eqref{pick} yields 
		\begin{align}\label{1g}
			k_3(H_{ind})=\frac{p^{\alpha-1}(p-1)}{12}[k_3(H_{ind}, 1)+ k_3(H_{ind}, g)].
		\end{align}
		Thus, we need to find only $k_3(H_{ind}, 1)$ and $k_3(H_{ind}, g)$. We first find $k_3(H_{ind}, 1)$. 
		\par Employing \eqref{war}, we have 
		\begin{align}\label{xandy}
			&k_3(H_{ind},1)\notag \\
			&=\frac{1}{2\times 4^5}\sum_{\substack{p\nmid x,\\ 1-x}}[ (2+h\chi_4(1-x)+\overline{h}\overline{\chi_4}(1-x))(2+h\chi_4(x)+\overline{h}\overline{\chi_4}(x))]\notag\\
			&\hspace{1.5cm} \sum_{\substack{p\nmid y,1-y,\\x-y}}[(2+h\chi_4(1-y)+\overline{h}\overline{\chi_4}(1-y))
			(2+h\chi_4(x-y)+\overline{h}\overline{\chi_4}(x-y)) \notag \\
			&\hspace{2.5cm}\times  (2+h\chi_4(y)+\overline{h}\overline{\chi_4}(y))]. 
		\end{align}
		Let $i_1,i_2,i_3\in\{\pm 1\} $ and let $F_{i_1,i_2,i_3}$ denote the term $\chi_4^{i_1}(y)\chi_4^{i_2}(1-y)\chi_4^{i_3}(x-y)$. Next, we expand and evaluate the inner summation in \eqref{xandy}. We have 
		\begin{align}\label{sun}
			&\sum_{\substack{p\nmid y,1-y,\\x-y}}\hspace{-0.2cm}[2+h\chi_4(y)+\overline{h}\overline{\chi_4}(y)][2+h\chi_4(1-y)+\overline{h}\overline{\chi_4}(1-y)][2+h\chi_4(x-y)+\overline{h}\overline{\chi_4}(x-y)]\notag\\
			&=\sum_{\substack{p\nmid y,1-y,\\x-y}}[8+4h\chi_4(y)+4\overline{h}\overline{\chi_4}(y)+4h\chi_4(1-y)+4\overline{h}\overline{\chi_4}(1-y)+4h\chi_4(x-y)\notag\\&+4\overline{h}\overline{\chi_4}(x-y)
			+4\chi_4(y)\overline{\chi_4}(1-y)+4\overline{\chi_4}(y)\chi_4(1-y)+4\chi_4(y)\overline{\chi_4}(x-y)\notag\\
			&+4\overline{\chi_4}(y)\chi_4(x-y)
			+4\chi_4(1-y)\overline{\chi_4}(x-y)+4\overline{\chi_4}(1-y)\chi_4(x-y)\notag\\
			&+2h^2\chi_4(y)\chi_4(1-y)+2{\overline{h}}^2\overline{\chi_4}(y)\overline{\chi_4}(1-y)+2h^2\chi_4(y)\chi_4(x-y)\notag\\
			&+2{\overline{h}}^2\overline{\chi_4}(y)\overline{\chi_4}(x-y)
			+2h^2\chi_4(1-y)\chi_4(x-y)+2{\overline{h}}^2\overline{\chi_4}(1-y)\overline{\chi_4}(x-y)\notag\\
			&+h^3 F_{1,1,1}+2hF_{1,1,-1}+2hF_{1,-1,1}+2\overline{h}F_{1,-1,-1}+2hF_{-1,1,1}+2\overline{h}F_{-1,1,-1}
			\notag\\
			&+2\overline{h}F_{-1,-1,1}+{\overline{h}}^3F_{-1,-1,-1}].
		\end{align}
		Now, referring to Lemma \ref{lemsec1}, $\eqref{sun}$ yields
		\begin{align}\label{yonly}
			&\sum_{\substack{p\nmid y,1-y,\\x-y}}[2+h\chi_4(y)+\overline{h}\overline{\chi_4}(y)][2+h\chi_4(1-y)+\overline{h}\overline{\chi_4}(1-y)][2+h\chi_4(x-y)+\overline{h}\overline{\chi_4}(x-y)]\notag \\
			&=A+B\chi_4(x)+\overline{B}\overline{\chi_4}(x)+B\chi_4(x-1)+\overline{B}\overline{\chi_4}(x-1)-4p^{\alpha-1}\chi_4(x)\overline{\chi_4}(x-1)\notag \\
			&-4p^{\alpha-1}\overline{\chi_4}(x)\chi_4(x-1)-2h^2 p^{\alpha-1}\chi_4(x)\chi_4(x-1)-2{\overline{h}}^2 p^{\alpha-1}\overline{\chi_4}(x)\overline{\chi_4}(x-1)+C\varphi(x)\notag \\
			&+C\varphi(x-1)\notag\\
			&+\sum_{\substack{p\nmid y,1-y,\\x-y}}[h^3 F_{1,1,1}+2hF_{1,1,-1}+2hF_{1,-1,1}+2\overline{h}F_{1,-1,-1}+2hF_{-1,1,1}\notag
			\\&+2\overline{h}F_{-1,1,-1}+2\overline{h}F_{-1,-1,1}+{\overline{h}}^3F_{-1,-1,-1}]\notag\\
			&=:\mathcal{I},
		\end{align}
		where $A:=8p^{\alpha-1}(p-8)+4 Re(h^2\rho)$, $B:=-12hp^{\alpha-1}$ and $C:=4 Re(h^2\rho)$.
		\par Next, we introduce some notations. Let 
		\begin{align*}
			A_1&:=32(p-15)p^{\alpha-1}+16 Re(h^2\rho),\\
			B_1&:=16(p-15)hp^{\alpha-1}+16 Re(h^2\rho),\\
			C_1&:=16 Re(h^2\rho),\\
			D_1&:=8h Re(h^2\rho),\\
			E_1&:=8(p-15)h^2 p^{\alpha-1}+(4h^2+16) Re(h^2\rho),\text{ and}\\
			F_1&:=16(p-15)p^{\alpha-1}+8 Re(h^2\rho).
		\end{align*} 
		For $i\in\{1,2,3,4\}$ and $j\in\{1,2,\ldots,8\}$, we define the following character sums.
		\begin{align*}
			T_j&:=\sum_{p\nmid x,1-x}\sum_y \chi_4^{i_1}(y)\chi_4^{i_2}(1-y)\chi_4^{i_3}(x-y),\\
			U_{ij}&:=\sum_{p\nmid x,1-x}\chi_4^l(m)\sum_y \chi_4^{i_1}(y)\chi_4^{i_2}(1-y)\chi_4^{i_3}(x-y),\\
			V_{ij}&:=\sum_x\chi_4^{l_1}(x)\chi_4^{l_2}(1-x)\sum_y \chi_4^{i_1}(y)\chi_4^{i_2}(1-y)\chi_4^{i_3}(x-y),
		\end{align*}
		where 
		\begin{align*}
			l = \left\{
			\begin{array}{lll}
				1, & \hbox{if $i$ is odd,} \\
				-1, & \hbox{\text{otherwise};}
			\end{array}
			\right.
		\end{align*}
		\begin{align*}
			m = \left\{
			\begin{array}{lll}
				x, & \hbox{if $i\in\{1,2\}$,} \\
				1-x, & \hbox{\text{otherwise;}}
			\end{array}
			\right.
		\end{align*} 
		and 
		\begin{align*}
			(l_1,l_2) = \left\{
			\begin{array}{lll}
				(1,1), & \hbox{if $i=1$,} \\
				(1,-1), & \hbox{if $i=2$,} \\
				(-1,1), & \hbox{if $i=3$,} \\
				(-1,-1), & \hbox{if $i=4$.}
			\end{array}
			\right.
		\end{align*} 
		Also, corresponding to each $j$, let $(i_1,i_2,i_3)$ take the value according to the following:
		\begin{align*}
			(i_1, i_2, i_3) = \left\{
			\begin{array}{lll}
				(1, 1, 1), & \hbox{if $j=1$,} \\
				(1, 1, -1), & \hbox{if $j=2$,} \\
				(1, -1, 1), & \hbox{if $j=3$,} \\
				(1, -1, -1), & \hbox{if $j=4$,} \\
				(-1, 1, 1), & \hbox{if $j=5$,} \\
				(-1, 1, -1), & \hbox{if $j=6$,} \\
				(-1, -1, 1), & \hbox{if $j=7$,} \\
				(-1, -1, -1), & \hbox{if $j=8$.} \\
			\end{array}
			\right.
		\end{align*} 
		Then, using $\eqref{yonly}$ and the notations we just described, $\eqref{xandy}$ yields
		\begin{align*}
			&k_3(H_{ind},1)=\frac{1}{2048}\hspace{-0.1cm}\sum_{p\nmid x,1-x}\hspace{-0.2cm}[(2+h\chi_4(x)+\overline{h}\overline{\chi_4}(x))(2+h\chi_4(1-x)+\overline{h}\overline{\chi_4}(1-x))\times \mathcal{I}]\\
			=&\frac{1}{2048}\sum_{p\nmid x,1-x}( A_1+B_1\chi_4(x)+\overline{B_1}\overline{\chi_4}(x)+B_1\chi_4(x-1)+\overline{B_1}\overline{\chi_4}(x-1)\\
			&+C_1\varphi(x)+C_1\varphi(x-1)+D_1\chi_4(x)\varphi(x-1)+\overline{D_1}\overline{\chi_4}(x)\varphi(x-1)\\
			&+D_1\varphi(x)\chi_4(x-1)+\overline{D_1}\varphi(x)\overline{\chi_4}(x-1)+E_1\chi_4(x)\chi_4(x-1)+\overline{E_1}\overline{\chi_4}(x)\overline{\chi_4}(x-1) \\
			&+F_1\chi_4(x)\overline{\chi_4}(1-x)+\overline{F_1}\overline{\chi_4}(x)\chi_4(x-1))\\
			&+\frac{1}{2048}( 4h^3T_1+8hT_2+8h T_3+8\overline{h}T_4+8h T_5+8\overline{h}T_6+8\overline{h}T_7+4{\overline{h}}^3 T_8\\ 
			&+2h^4  U_{11}+4h^2 U_{12}+4h^2 U_{13}+8 U_{14}+4h^2 U_{15}+8 U_{16}+8 U_{17}+4{\overline{h}}^2 U_{18}\\
			&+4h^2 U_{21} +8 U_{22}+8 U_{23}+4{\overline{h}}^2 U_{24}+8 U_{25}+4{\overline{h}}^2 U_{26}+4{\overline{h}}^2 U_{27}+2{\overline{h}}^4 U_{28}\\
			&+2h^4  U_{31}+4h^2 U_{32}+4h^2 U_{33}+8 U_{34}+4h^2 U_{35}+8 U_{36}+8 U_{37}+4{\overline{h}}^2 U_{38}\\
			&+4h^2 U_{41} +8 U_{42}+8 U_{43}+4{\overline{h}}^2 U_{44}+8 U_{45}+4{\overline{h}}^2 U_{46}+4{\overline{h}}^2 U_{47}+2{\overline{h}}^4 U_{48}\\
			&+h^5 V_{11}+2h^3 V_{12}+2h^3 V_{13}+4h V_{14}+2h^3 V_{15}+4h V_{16}+4h V_{17}+4\overline{h} V_{18}\\
			&+2h^3 V_{21}+4h V_{22}+4h V_{23}+4\overline{h}V_{24}+4h V_{25}+4\overline{h}V_{26}+4\overline{h}V_{27}+2{\overline{h}}^3 V_{28}\\
			&+2h^3 V_{31}+4h V_{32}+4h V_{33}+4\overline{h}V_{34}+4h V_{35}+4\overline{h}V_{36}+4\overline{h}V_{37}+2{\overline{h}}^3 V_{38}\\
			&+4h V_{41}+4\overline{h}V_{42}+4\overline{h}V_{43}+2{\overline{h}}^3 V_{44}+4\overline{h}V_{45}+2{\overline{h}}^3 V_{46}+2{\overline{h}}^3 V_{47}+{\overline{h}}^5 V_{48} ). 	
		\end{align*}
		Employing Lemmas \ref{lema1} and \ref{corr}, we find that
		\begin{align}\label{bigex}
			&k_3(H_{ind},1)=\frac{1}{2048}\left[16(p-9)p^{\alpha-1} Re(h^2\rho)+32 p^{2\alpha-2}(p^2-20p+81)\right.\notag\\
			&+2Re\{\rho(8h^2(p-17)p^{\alpha-1}+4(h^2+4)Re(h^2\rho))\} 
			+32 Re(h^2\rho)Re(\xi h)+16Re(h^2\rho^2)\notag\\
			&+h^5 V_{11}+2h^3 V_{12}+2h^3 V_{13}+4h V_{14}+2h^3 V_{15}+4h V_{16}+4h V_{17}+4\overline{h} V_{18}\notag \\
			&+2h^3 V_{21}+4h V_{22}+4h V_{23}+4\overline{h}V_{24}+4h V_{25}+4\overline{h}V_{26}+4\overline{h}V_{27}+2{\overline{h}}^3 V_{28}\notag \\
			&+2h^3 V_{31}+4h V_{32}+4h V_{33}+4\overline{h}V_{34}+4h V_{35}+4\overline{h}V_{36}+4\overline{h}V_{37}+2{\overline{h}}^3 V_{38}\notag \\
			&\left.+4h V_{41}+4\overline{h}V_{42}+4\overline{h}V_{43}+2{\overline{h}}^3 V_{44}+4\overline{h}V_{45}+2{\overline{h}}^3 V_{46}+2{\overline{h}}^3 V_{47}+{\overline{h}}^5 V_{48}\right]. 
		\end{align}
		Now, we convert each term of the form $V_{i j}$ $[i \in\{1,2,3,4\}, j\in\{1,2, \ldots, 8\}]$ into its equivalent $p^{2\alpha}\cdot {_{3}}F_{2}$ form. We use the notation $\ell(t_{1}, t_{2}, \ldots, t_{5})\in \mathbb{Z}_4^5$ 
		for the term $p^{2\alpha}\cdot {_{3}}F_{2}\left(\begin{array}{ccc}\chi_4^{t_{1}}, & \chi_4^{t_{2}}, & \chi_4^{t_{3}}\\ & \chi_4^{t_{4}}, & \chi_4^{t_{5}}\end{array}| 1\right)$.
		Then, $\eqref{bigex}$ yields 
		\begin{align}\label{bigexp}
			&k_3(H_{ind},1)=\frac{1}{2048}\left[16(p-9)p^{\alpha-1} Re(h^2\rho)+32 p^{2\alpha-2}(p^2-20p+81)\right.\notag\\
			&+2Re\{\rho(8h^2(p-17)p^{\alpha-1}+4(h^2+4)Re(h^2\rho))\} 
			+32 Re(h^2\rho)Re(\xi h)+16Re(h^2\rho^2)\notag\\
			&+h^{5}\ell(3,1,1,2,2)+2 h^{3}\ell(1,1,3,2,0)+2 h^{3}\ell(3,1,1,0,2)+4 h\ell(1,1,3,0,0)\notag \\
			&+2h^{3}\ell(3,3,1,0,2)+4 h\ell(1,3,3,0,0)+4 h\ell(3,3,1,2,2)+4 \overline{h}\ell(1,3,3,2,0) \notag\\
			&+2 h^{3}\ell(3,1,3,2,2)+4 h\ell(1,1,1,2,0)+4 h\ell(3,1,3,0,2)+4 \overline{h}\ell(1,1,1,0,0)\notag\\
			&+4 h\ell(3,3,3,0,2)+4 \overline{h}\ell(1,3,1,0,0)+4 \overline{h}\ell(3,3,3,2,2)+2 {\overline{h}}^{3}\ell(1,3,1,2,0)\notag \\
			&+2 h^{3}\ell(3,1,3,2,0)+4 h\ell(1,1,1,2,2)+4 h\ell(3,1,3,0,0)+4 \overline{h}\ell(1,1,1,0,2)\notag\\
			&+4 h\ell(3,3,3,0,0)+4 \overline{h}\ell(1,3,1,0,2)+4 \overline{h}\ell(3,3,3,2,0)+2 {\overline{h}}^{3}\ell(1,3,1,2,2)  \notag \\
			&+4 h\ell(3,1,1,2,0)+4 \overline{h}\ell(1,1,3,2,2)+4 \overline{h}\ell(3,1,1,0,0)+2{\overline{h}}^{3}\ell(1,1,3,0,2)\notag\\
			&\left. +4 \overline{h}\ell(3,3,1,0,0)+2 \overline{h}^{3}\ell(1,3,3,0,2)+ 2\overline{h}^{3}\ell(3,3,1,2,0)+{\overline{h}}^{5}\ell(1,3,3,2,2)\right].
		\end{align}
		Next, we list the terms $\ell(t_1,t_2,\ldots, t_5)$ in each orbit of the group action of $\mathcal{F}$ on $X$, and then group the corresponding terms in $\eqref{bigexp}$ together (this is possible due to Lemma \ref{dlemma1}).
		The orbit representatives $\ell(1,1,1,0,0)$, $\ell(3,3,3,0,0)$, $\ell(1,3,3,2,0)$, $\ell(3,1,1,2,0)$ and $\ell(1,1,3,0,0)$ are the ones whose orbits exhaust the hypergeometric terms in $\eqref{bigexp}$. 
		We denote the $p^{2\alpha}\cdot {_{3}}F_{2}$ terms corresponding to these orbit representatives as $M_1,M_2,\ldots,M_5$ respectively. Then, $\eqref{bigexp}$ yields
		\begin{align}\label{mex}
			&k_3(H_{ind},1)=\frac{1}{2048}
			\left[16(p-9)p^{\alpha-1} Re(h^2\rho)+32 p^{2\alpha-2}(p^2-20p+81)\right.\notag\\
			&+2Re\{\rho(8h^2(p-17)p^{\alpha-1}+4(h^2+4)Re(h^2\rho))\} 
			+32 Re(h^2\rho)Re(\xi h)+16Re(h^2\rho^2)\notag\\
			&+h^{5}M_4+2 h^{3}M_1+2 h^{3}M_1+4 hM_5	+2h^{3}M_1+4 hM_5+4 hM_1+4 \overline{h}M_3 \notag\\
			&+2 h^{3}M_4+4 hM_5+4 hM_2+4 \overline{h}M_1+4 hM_5+4 \overline{h}M_5+4 \overline{h}M_5+2 {\overline{h}}^{3}M_3\notag \\
			&+2 h^{3}M_4+4 hM_5+4 hM_5+4 \overline{h}M_5
			+4 hM_2+4 \overline{h}M_1+4 \overline{h}M_5+2 {\overline{h}}^{3}M_3  \notag \\
			&+4 hM_4+4 \overline{h}M_2+4 \overline{h}M_5+2{\overline{h}}^{3}M_2
			+4 \overline{h}M_5+2 \overline{h}^{3}M_2+\left. 2\overline{h}^{3}M_2+{\overline{h}}^{5}M_3\right].
		\end{align}
		Simplifying \eqref{mex}, we have the reduced expression of $k_3(H_{ind},1)$ as follows.
		\begin{align}\label{spec}
			k_3(H_{ind},1)&=\dfrac{1}{2048}[16(p-9)p^{\alpha-1} Re(h^2\rho)+32 p^{2\alpha-2}(p^2-20p+81)\notag\\
			&+2Re\{\rho(8h^2(p-17)p^{\alpha-1}+4(h^2+4)Re(h^2\rho))\} 
			+32 Re(h^2\rho)Re(\xi h)\notag\\
			&+16Re(h^2\rho^2)+8(1-\overline{h})M_1+8(1-{h})M_2-8h M_3-8\overline{h}M_4+48 M_5]	.
		\end{align}
		Returning back to $\eqref{1g}$, we are now left to calculate $k_3( H_{ind},g)$. Again, by employing \eqref{war}, we have 
		\begin{align}\label{wand}
			&k_3(H_{ind},g)\notag\\
			&=\frac{1}{2048}\sum_{\substack{p\nmid x,\\g-x}}\sum_{\substack{p\nmid y,g-y,\\x-y}}\left[ (2+h\chi_4(g-x)+\overline{h}\overline{\chi_4}(g-x)) (2+h\chi_4(g-y)+\overline{h}\overline{\chi_4}(g-y))\notag \right.\\
			&\times\left. (2+h\chi_4(x-y)+\overline{h}\overline{\chi_4}(x-y))(2+h\chi_4(x)+\overline{h}\overline{\chi_4}(x)) (2+h\chi_4(y)+\overline{h}\overline{\chi_4}(y))\right]. 
		\end{align}	
		Using the substitutions $Y=yg^{-1}$ and $X=xg^{-1}$, and then using the fact that $h\chi_4(g)=\overline{h}$, \eqref{wand} yields
		\begin{align*}
			&k_3(H_{ind},g)\\
			&=\frac{1}{2048}\sum_{\substack{p\nmid x,\\1-x}}\sum_{\substack{p\nmid y,1-y,\\x-y}}\left[ (2+\overline{h}\chi_4(1-x)+h\overline{\chi_4}(1-x)) (2+\overline{h}\chi_4(1-y)+h\overline{\chi_4}(1-y))\notag \right.\\
			&\times\left. (2+\overline{h}\chi_4(x-y)+h\overline{\chi_4}(x-y)) (2+\overline{h}\chi_4(x)+h\overline{\chi_4}(x))(2+\overline{h}\chi_4(y)+h\overline{\chi_4}(y))\right].
		\end{align*}
		Comparing this with $\eqref{xandy}$ we see that the expansion of the expression inside this summation will consist of the same summation terms as in $\eqref{xandy}$, except that the coefficient corresponding to each summation term in this case, will 
		become the complex conjugate of the corresponding coefficient of the same summation term in \eqref{xandy}. So, we proceed to evaluate $k_3(H_{ind},g)$ in the same manner as we did for $k_3(H_{ind},1)$ and find that for the step analogous to \eqref{yonly}, there is a change in the value of the constants $``A$'' and $``C"$: $Re(\overline{h}^2\rho)$ takes the place of $Re({h}^2\rho)$; the other coefficients remain unchanged except for complex conjugation. Eventually, we have that the expression for $k_3(H_{ind},g)$ can be written by replacing $Re({h}^2\rho)$ by $Re(\overline{h}^2\rho)$ and taking the complex conjugate of the coefficients of $\rho,\xi$ as well as the complex conjugate of the coefficients of the hypergeometric terms corresponding to $k_3(H_{ind},1)$ in \eqref{spec}. Precisely, we have
		\begin{align}\label{spec1}
			k_3(H_{ind},g)&=\dfrac{1}{2048}[16(p-9)p^{\alpha-1} Re(\overline{h}^2\rho)+32 p^{2\alpha-2}(p^2-20p+81)\notag\\
			&+2Re\{\rho(8\overline{h}^2(p-17)p^{\alpha-1}+4(\overline{h}^2+4)Re(\overline{h}^2\rho))\} 
			+32 Re(\overline{h}^2\rho)Re(\xi \overline{h})\notag\\
			&+16Re(\overline{h}^2\rho^2)+8(1-{h})M_1+8(1-\overline{h})M_2+8\overline{h} M_3-8{h}M_4+48 M_5].
		\end{align}
		Finally, using \eqref{spec} and \eqref{spec1} in \eqref{1g}, we have
		\begin{align*}
			k_3(H_{ind})=\frac{p^{\alpha-1}(p-1)}{768}&\left[2 p^{2\alpha-2}(p^2-20p+81)+2(Im\rho)^2+4 Im\rho\cdot Im\xi\right.\notag\\
			&\left.-Re(M_3)+3 M_5\right].	
		\end{align*}
		Substituting the above value in $\eqref{tt}$, we complete the proof of the theorem.
	\end{proof}

	\section*{Appendix: Python Code}
	The Python code that we used to verify Theorem \ref{thm2} numerically can be found in the following link:\\\\
	https://github.com/AnwitaB/cliques\textunderscore of\textunderscore order\textunderscore four\textunderscore in\textunderscore Peisert-like\textunderscore graph.git
	\\\\ For convenience, we have also provided the code below. In the code, we refer to the theorem for the notations of $\rho,\xi,M_3$ and $M_5$. The code takes a prime $p\equiv 1\pmod 8$ and a positive integer $r$ as inputs, and computes the number of cliques of order four in the Peisert-like graph $G^\ast(p^r)$, the Jacobi sums (denoted by $\rho$ and $\xi$), and the hypergeometric terms (denoted by $M_3$ and $M_5$).
	\vspace{0.3cm}\\
	{\tt
		from sympy.ntheory.factor\_ import totient\\
		from math import gcd\\
		import cmath\\
		import numpy as np\\
		\\
		\#the function below calculates the number of cliques of order four in the
		Peisert-like graph G$^\ast$(n) where n=p\^{}r\\
		def cliques\_four (n,H): \hspace{1cm}\#H is the connection set of the graph \\ \hspace*{0.2cm} b1=(int)(totient(n)/2) \\
		\hspace*{0.2cm} number=0\\ 
		\hspace*{0.2cm} \tt flag1, flag2, flag3, flag4, flag5, flag6=0,0,0,0,0,0\\
		\hspace*{0.2cm} temp1, temp2, temp3, temp4, temp5, temp6=0,0,0,0,0,0\\
		\\
		\#now, checking if each tuple (i,j,k,l) forms a clique\\
		\hspace*{0.3cm}for i in range (n):  \\     
		\hspace*{0.6cm}for j in range(i+1,n): \#checking if ij is an edge\\
		\hspace*{0.9cm}temp1, flag1=(i-j)\%n,0\\
		\hspace*{0.9cm}for m in range (b1):\\
		\hspace*{1.2cm}if temp1==H[m]:\\
		\hspace*{1.5cm}flag1=1\\
		\hspace*{1.5cm}break\\
		\hspace*{0.9cm}if flag1==0:\\
		\hspace*{1.2cm}continue\\
		\hspace*{0.9cm}for k in range(j+1,n):\hspace*{0.3cm} \#checking if ik and jk are edges\\
		\hspace*{1.2cm}temp2, temp3, flag2, flag3=(i-k)\%n, (j-k)\%n, 0, 0\\
		\hspace*{1.2cm}for m in range (b1):\\
		\hspace*{1.5cm}if temp2==H[m]:\\
		\hspace*{1.8cm}flag2=1\\
		\hspace*{1.8cm}break\\
		\hspace*{1.2cm}for m in range (b1):\\
		\hspace*{1.5cm}if temp3==H[m]:\\
		\hspace*{1.8cm}flag3=1\\
		\hspace*{1.8cm}break\\
		\hspace*{1.2cm}if flag2==0 or flag3==0:\\
		\hspace*{1.5cm}continue\\
		\hspace*{1.2cm}for l in range(k+1,n):  \#checking if il,jl,kl are edges\\
		\hspace*{1.5cm}temp4, temp5, temp6=(i-l)\%n, (j-l)\%n, (k-l)\%n\\ 
		\hspace*{1.5cm}flag4, flag5, flag6= 0, 0, 0\\
		\hspace*{1.5cm}for m in range (b1):\\
		\hspace*{1.8cm}if temp4==H[m]:\\
		\hspace*{2.1cm}flag4=1\\
		\hspace*{2.1cm}break\\
		\hspace*{1.5cm}for m in range (b1):\\
		\hspace*{1.8cm}if temp5==H[m]:\\
		\hspace*{2.1cm}flag5=1\\
		\hspace*{2.1cm}break\\
		\hspace*{1.5cm}for m in range (b1):\\
		\hspace*{1.8cm}if temp6==H[m]:\\
		\hspace*{2.1cm}flag6=1\\
		\hspace*{2.1cm}break\\
		\hspace*{1.5cm}if flag4==0 or flag5==0 or flag6==0:\\
		\hspace*{1.8cm}continue\\
		\hspace*{1.5cm}number=number+1  \,   \#counts the number of tuples (i,j,k,l)\\
		\hspace*{5cm}\#forming a clique\\
		\\
		\hspace*{0.3cm}print("The number of cliques of order four in the Peisert-like graph G$^\ast$(p\^{}r) is ",number)\\
		\hspace*{0.3cm}return 1\\
		\\
		\\
		def raised(k):      \#this returns the value of i\^{}k \\
		\hspace*{0.3cm}if (k\%4)==0:\\
		\hspace*{0.6cm}return 1\\
		\hspace*{0.3cm}elif (k\%4)==1:\\
		\hspace*{0.6cm}return complex(0,1)\\
		\hspace*{0.3cm}elif (k\%4)==2:\\
		\hspace*{0.6cm}return -1\\
		\hspace*{0.3cm}else:\\
		\hspace*{0.6cm}return complex(0,1)*(-1)\\
		\\
		\#the function below calculates the Jacobi sums rho:=J(chi\_4,chi\_4)\\
		\#and zi:=J(chi\_4,phi) where chi\_4(g)=i, a primitive fourth root of\\
		\#unity and phi is the quadratic character, and g is the generator\\
		\# of Z\_n\^{}*\\
		def jacobi\_sums(n,zn,a):     \\
		\\
		\hspace*{0.3cm}pos\_x, pos\_x1=0,0\\
		\hspace*{0.3cm}rho, zi=0,0\\
		\hspace*{0.3cm}for i in range(totient(n)):\\
		\hspace*{0.6cm}x=zn[i]\\
		\hspace*{0.6cm}x1=(1-x)\%n\\
		\hspace*{0.6cm}if gcd(x1,n)==1:\\
		\hspace*{0.9cm}for j in range(totient(n)):  \#finds pos\_x such that g\^{}pos\_x=x\\
		\hspace*{1.2cm}if a[j]==x:\\
		\hspace*{1.5cm}pos\_x=j\\
		\hspace*{1.5cm}break\\
		\hspace*{0.9cm}for j in range(totient(n)):   \#finds pos\_x1 such that \\
		\hspace*{1.2cm}if a[j]==x1:\hspace*{3cm}\#g\^{}pos\_x1=1-x\\
		\hspace*{1.5cm}pos\_x1=j\\
		\hspace*{1.5cm}break\\
		\hspace*{0.9cm}rho=rho+raised(pos\_x+pos\_x1)\\
		\hspace*{0.9cm}zi=zi+raised(pos\_x+2*pos\_x1)\\
		\hspace*{0.3cm}print("The Jacobi sum rho:=J(chi\_4,chi\_4) is ",rho)\\
		\hspace*{0.3cm}print("The Jacobi sum zi:=J(chi\_4,phi) is ",zi)\\
		\hspace*{0.3cm}return 1\\
		\\
		\\
		def hypergeom\_sums(n,zn,a):                  \#this function calculates the\\
		\hspace*{5cm} \#hypergeometric terms M\_3 and M\_5\\
		\hspace*{0.3cm}x,x1=0,0 \\
		\hspace*{0.3cm}pos\_x, pos\_x1=0,0\\
		\hspace*{0.3cm}sum3,sum5=0,0,\\
		\hspace*{0.3cm}temp=0\\
		\hspace*{0.3cm}pos\_y, pos\_y1, pos\_xy=0,0,0\\
		\# For calculating the hypergeometric terms, which are double\\
		\#summations, we assume that the outer summation is indexed by\\
		\#x and the inner summation is indexed by y\\
		\hspace*{0.3cm}for i in range(totient(n)):\\
		\hspace*{0.6cm}x=zn[i]\\
		\hspace*{0.6cm}x1=(1-x)\%n\\
		\hspace*{0.6cm}if gcd(x1,n)==1:\\
		\hspace*{0.9cm}for j in range(totient(n)):     \#finds pos\_x such that \\
		\hspace*{1.2cm}if a[j]==x:\hspace*{4cm}\#g\^{}pos\_x=x\\
		\hspace*{1.5cm}pos\_x=j\\
		\hspace*{1.5cm}break\\
		\hspace*{0.9cm}for j in range(totient(n)):     \#finds pos\_x1 such that\\
		\hspace*{1.2cm}if a[j]==x1:\hspace*{3cm}\#g\^{}pos\_x1=1-x\\
		\hspace*{1.5cm}pos\_x1=j\\
		\hspace*{1.5cm}break\\
		\hspace*{0.9cm}temp=raised(pos\_x+pos\_x1)      \hspace*{1.5cm} \#chi\_4(x(1-x))\\
		\hspace*{0.9cm}temp1, tempo1=0,0\\
		\hspace*{0.9cm}for k in range(totient(n)):\\
		\hspace*{1.2cm}y=zn[k]\\
		\hspace*{1.2cm}y1=(1-y)\%n\\
		\hspace*{1.2cm}xy=(x-y)\%n\\
		\hspace*{1.2cm}if (gcd(y1,n)!=1) or (gcd(xy,n)!=1):\\
		\hspace*{1.5cm}continue\\
		\hspace*{1.2cm}for l in range(totient(n)):   \#finds pos\_y such that\\
		\hspace*{1.5cm}if a[l]==y:\hspace*{3cm}\#g\^{}pos\_y=y\\
		\hspace*{1.8cm}pos\_y=l\\
		\hspace*{1.8cm}break\\
		\hspace*{1.2cm}for l in range(totient(n)):   \#finds pos\_y1 such that \\
		\hspace*{1.5cm}if a[l]==y1:\hspace*{3cm}\#g\^{}pos\_y1=1-y\\
		\hspace*{1.8cm}pos\_y1=l\\
		\hspace*{1.8cm}break\\
		\hspace*{1.2cm}for l in range(totient(n)):   \#finds pos\_xy such that\\
		\hspace*{1.5cm}if a[l]==xy:\hspace{3cm}\#g\^{}pos\_xy=x-y\\
		\hspace*{1.8cm}pos\_xy=l\\
		\hspace*{1.8cm}break\\
		\hspace*{1.2cm}temp1=temp1+raised(pos\_y+pos\_y1+pos\_xy)  \\                  \hspace*{7cm}\#chi\_4(y(1-y)(x-y)) \\
		\hspace*{1.2cm}tempo1=tempo1+raised(pos\_y)*np.conj(raised(pos\_y1+pos\_xy))\\
		\hspace*{3cm}\#chi\_4(y)overline({chi\_4(1-y)(x-y))}) for M\_5      \\
		\hspace*{0.9cm}temp1=np.conj(temp1)                                         \#overline({chi\_4(y(1-y)(x-y))})\hspace*{-0.1cm} for M\_3\\
		\hspace*{0.9cm}sum3=sum3+temp*temp1                                         \#calculates M\_3 which involves the\\
		\hspace*{3cm}\#sum  chi\_4(x(1-x))overline({chi\_4(y(1-y)(x-y))})\\
		\hspace*{0.9cm}sum5=sum5+temp*tempo1                                   \#calculates M\_5 which involves the \\ \hspace*{2cm}\#sum  chi\_4(x(1-x))chi\_4(y)overline({chi\_4((1-y)(x-y))})\\ \\
		\hspace*{0.3cm}print("The hypergeometric sum M\_3 is ",sum3)\\
		\hspace*{0.3cm}print("The hypergeometric sum M\_5 is ",sum5)\\
		\hspace*{0.3cm}return 1\\ \\
		def main():\\
		\hspace*{0.3cm}print("enter a prime p congruent to 1 modulo 8")\\
		\hspace*{0.3cm}p = int(input())\\
		\hspace*{0.3cm}print("enter a positive integer r")\\
		\hspace*{0.3cm}r = int(input())\\
		\hspace*{0.3cm}n=int(pow(p,r))\\
		\hspace*{0.3cm}zn=list()\\
		\hspace*{0.3cm}div=list()\\
		\hspace*{0.3cm}g = 0\\
		\\
		\hspace*{0.3cm}for i in range(1,n):\\
		\hspace*{0.6cm}if gcd(i,n)==1:\\
		\hspace*{0.9cm}zn.append(i)    \hspace*{1.5cm}          \#zn contains the elements of Z\_n\^{}*\\ \\
		\hspace*{0.3cm}for i in range(1, int(totient(n)/2)+1):\\
		\hspace*{0.6cm}if totient(n)\%i==0:\\
		\hspace*{0.9cm}div.append(i)         \hspace*{0.6cm}    \#div contains all the positive divisors\\
		\hspace*{0.3cm}ldiv=len(div) \hspace*{1.5cm}\#of phi(n), except phi(n)\\ \\
		\hspace*{0.3cm}for i in range(totient(n)):        \#this loop finds g, a generator \\    
		\hspace*{0.6cm}var=0\hspace*{4cm}\#of Z\_n\^{}*. Each element a1 in \\
		\hspace*{0.6cm}a1=zn[i]\hspace*{3.5cm}\#Z\_n\^{}* is considered, and if \\
		\hspace*{0.6cm}for d in range (ldiv):\hspace*{0.9cm}\#a1\^{}dd=1 in Z\_n\^{}* for some dd  \\
		\hspace*{0.9cm}dd=div[d]\hspace*{3cm}\#in div, then a1 is discarded \\
		\hspace*{0.9cm}if (pow(a1,dd)\%n)==1:\\
		\hspace*{1.2cm}var=1\\
		\hspace*{1.2cm}break\\
		\hspace*{0.6cm}if var==0:\\
		\hspace*{0.9cm}g=a1\\
		\hspace*{0.9cm}break\\
		\hspace*{0.3cm}g1=(g*g*g*g)\%n\\
		\\
		\hspace*{0.3cm}H=list()            \hspace*{1cm} \#H is the connection set of the graph G$^\ast$(p\^{}r)\\
		\hspace*{0.3cm}for i in range(1, int(totient(n)/4)+1):\\
		\hspace*{0.6cm}temp=1\\
		\hspace*{0.6cm}for j in range(1, i+1):\\
		\hspace*{0.9cm}temp=temp*g1\\
		\hspace*{0.6cm}H.append(temp\%n)    \#powers of g\^{}4, that is, elements of <g\^{}4>,\\
		\hspace*{3.5cm} \#are appended to H\\
		\hspace*{0.3cm}for i in range(int(totient(n)/4)):\\
		\hspace*{0.6cm}H.append((H[i]*g)\%n)  \#elements of g<g\^{}4> are appended to H\\
		\hspace*{0.3cm}a=list()\\
		\hspace*{0.3cm}for i in range(totient(n)):\\
		\hspace*{0.6cm}s=(int)(pow(g,i))\\
		\hspace*{0.6cm}a.append(s\%n)  \hspace*{0.3cm}     \#a stores all the powers of the generator g, \\
		\hspace*{3.7cm}\#that is, 1,g,g\^{}2,..,g\^{}(totient(n)-1)\\
		\hspace*{0.3cm}cliques\_four(n, H)\\
		\hspace*{0.3cm}jacobi\_sums(n,zn,a)\\
		\hspace*{0.3cm}hypergeom\_sums(n,zn,a)\\
		main()
	}

\end{document}